\def\@begintheorem#1#2{\par\bgroup{\sc #1 \ #2. }  \it \\\ignorespace }
\def\@opargbegintheorem#1#2#3{\par\bgroup{\sc #1\ #2 \ (#3).}  \it  \ignorespace}
\def\@endtheorem{\egroup}
\theoremstyle{plain}
\newtheorem{theorem}{Theorem}[section]
\theoremstyle{definition}
\theoremstyle{remark}
\newtheorem{remark}[theorem]{Remark}
\theoremstyle{plain}
\theoremstyle{plain}
\newtheorem{lemma}[theorem]{Lemma}
\theoremstyle{plain}
\newtheorem{corollary}[theorem]{Corollary}
\theoremstyle{plain}
\newtheorem{proposition}[theorem]{Proposition}
\numberwithin{equation}{section}
\newcommand{\C}{{\mathbb C}}
\newcommand{\N}{{\mathbb N}}
\newcommand{\D}{\mathcal{D}}
\newcommand{\R}{{\mathbb R}}
\newcommand{\I}{\mathbb{I}}
\newcommand{\V}{\mathbb{V}}
\newcommand{\Rt}{{\R}^3}
\newcommand{\hx}{\hat{x}}
\renewcommand{\H}{\mathcal{H}}
\newcommand{\de}{\partial}
\DeclareMathOperator{\Realpart}{Re}
\renewcommand{\Re}{\Realpart}
\DeclareMathOperator{\Imaginarypart}{Im}
\renewcommand{\Im}{\Imaginarypart}
{\left\lbrace\begin{array}{@{}l@{}}}%
{\end{array}\right.}
\DeclarePairedDelimiter{\abs}{\lvert}{\rvert}
\DeclarePairedDelimiter{\norm}{\lVert}{\rVert}
\DeclarePairedDelimiter{\seq}{\lbrace}{\rbrace}
\newcommand{\Hm}{H_{min}}
\newcommand{\ignora}[1]{}
\title[The Dirac Operator with Coulomb-Type Spherically Symmetric Potentials]{Self-Adjoint Extensions for the Dirac Operator \\with Coulomb-Type Spherically Symmetric Potentials}
\date{\today}
\author[B. Cassano, F. Pizzichillo]{Biagio Cassano and Fabio Pizzichillo}
\subjclass[2010]{Primary 81Q10; Secondary 47N20,  47N50, 47B25.}
\keywords{Dirac operator, Coulomb potential, Hardy inequality, self-adjoint operator.}
\address{B. Cassano, BCAM - Basque Center for Applied Mathematics,
Alameda de Mazarredo 14, 48009 Bilbao (Spain)}
\email{bcassano@bcamath.org}
\address{F. Pizzichillo, BCAM - Basque Center for Applied Mathematics,
Alameda de Mazarredo 14, 48009 Bilbao (Spain)}
\email{fpizzichillo@bcamath.org}
\begin{document}
\begin{abstract}
We describe the self-adjoint realizations of the operator 
$H:=-i\alpha\cdot \nabla + m\beta + \mathbb V(x)$, for $m\in\mathbb R $, 
and 
$\mathbb V(x)= \abs{x}^{-1} ( \nu \mathbb{I}_4 +\mu \beta -i
  \lambda \alpha\cdot{x}/{\abs{x}}\,\beta)$, for $\nu,\mu,\lambda \in \mathbb R$.
We characterize the self-adjointness in terms of the behaviour of the functions of the domain in the origin,
exploiting Hardy-type estimates and trace lemmas.
Finally, we describe the \emph{distinguished} extension.
\end{abstract}
\maketitle

\section{Introduction and main results}
In this paper we are interested in 
the self-adjoint realizations of the 
differential operator $H:=H_0+\V$, 
where $H_0$ is the free Dirac operator in $\R^3$ defined by 
\begin{equation*}
H_0:=-i\alpha\cdot\nabla+m\beta, 
\end{equation*}
where $m\in\R$,
\begin{gather*}
\beta=\left(\begin{array}{cc}
\mathbb{I}_2&0\\
0&-\mathbb{I}_2
\end{array}\right),
\quad 
\mathbb{I}_2:=\left(
\begin{array}{cc}
1 & 0\\
0 & 1
\end{array}\right), \\
\alpha=(\alpha_1,\alpha_2,\alpha_3),
\quad
\alpha_j=\left(\begin{array}{cc}
0& {\sigma}_j\\
{\sigma}_j&0
\end{array}\right)\quad (j=1,2,3),
\end{gather*}
%and 
$\sigma_k$ are the \emph{Pauli matrices}
\begin{equation*}\label{paulimatrices}
\quad{\sigma}_1 =\left(
\begin{array}{cc}
0 & 1\\
1 & 0
\end{array}\right),\quad {\sigma}_2=\left(
\begin{array}{cc}
0 & -i\\
i & 0
\end{array}
\right),\quad{\sigma}_3=\left(
\begin{array}{cc}
1 & 0\\
0 & -1
\end{array}\right),
\end{equation*}
and 
\begin{equation}\label{eq:def.V}
\V(x):= \frac{1}{|x|}\left( \nu \mathbb{I}_4 +\mu \beta +
  \lambda \left(-i\alpha\cdot\frac{x}{\abs{x}}\,\beta \right)\right), \quad  \text{for } x\neq 0,  
\end{equation}
where $\nu$, $\lambda$ and $\mu$ are real numbers, 
and $\mathbb{I}_4$ is the 
$4 \times 4$ identity matrix.

The \emph{Coulomb potential} $\V_C$ is defined as 
\begin{equation*}
\V_{C}(x)=\frac{\nu}{|x|}\I_4,
\end{equation*}
with $\nu=e^2Z/\hbar$, 
where 
$Z$ is the atomic number, $e$ is the charge of the electron and $\hbar$ is the Plank's constant (we set $\hbar=1$).
The operator $H_0 + \mathbb V_C$ describes relativistic
spin--$\frac12$ particles in the external electrostatic field of an atomic nucleus. 

In quantum mechanics, observables correspond to self-adjoint operators.
For this reason, it is physically interesting to study of the self-adjointness of the operator $H_0+\V_{C}$. 
The first contribution was made by Case in \cite{case1950singular}: in this work, the author was the first to observe that some boundary conditions are required at zero. 
Anyway, the first result of self-adjointness is due to Kato in \cite{kato1951fundamental} and it is based on Hardy inequality
\begin{equation}\label{eq:classicalhardy}
\frac{1}{4}\int_{\Rt}\frac{|f|^2}{|x|^2}\, dx\leq \int_{\Rt} |\nabla f|^2\,dx, \quad\text{for}
\
f\in C^\infty_c(\Rt),
\end{equation}
and the Kato-Rellich Theorem.
He could prove that for $|\nu|\in\left[0,\frac12\right)$, the operator $H_0+\V_{C}$ is essentially self-adjoint on $C^\infty_c(\Rt)^4$ and self-adjoint on $\D(H_0)=H^1(\Rt)^4$.
Kato's approach could be used independently on the spherical symmetry of the potential: it is possible to consider a $4\times 4$ Hermitian real-valued matrix potential $\V$ such that
\begin{equation*}
|\V_{i,j}(x)|\leq a\frac{1}{|x|}+b,
\end{equation*}
with $b\in\R$ and $a<1/2$, see \cite[Theorem V 5.10]{kato2013perturbation}.

The result of Kato does not cover the whole range of $\nu$ on which the Dirac-Coulomb operator is essentially self-adjoint.
In fact several different  approaches were developed
in order to expand  the range of admissible $\nu$.
In \cite{rellich1953eigenwerttheorie} by Rellich and in \cite{weidmann1971oszillationsmethoden} by Weidmann,
using the \emph{partial wave decomposition} and the Weyl-Stone theory for systems of ordinary differential equations,
the range $|\nu|\in \left[0,\frac{\sqrt{3}}{2}\right)$ was recovered.
Moreover, generalizing the Kato-Rellich Theorem and by means of the theory of Fredholm operators, Rejt\"o firstly recaptured  the range $\nu\in\left[0,\frac{3}{4}\right)$ in \cite{rejto1971some} and few years later $|\nu|\in \left[0,\frac{\sqrt{3}}{2}\right)$ in \cite{gustafson1973some} with Gustafson.
Finally, in \cite{schmincke1972essential}, Schmincke considered $H_0+\V_C=(H_0+S)+(\V_C-S)$,
being
$S$ a suitable \emph{intercalary} operator. Then, he proved the self-adjointness 
of $H_0+\V$ showing that $H_0+S$ is self-adjoint and $\V_C-S$ is a small perturbation of $H_0+S$, in the sense of the Kato-Rellich Theorem.

This range of $\nu$ such that the operator $H_0+\V_C$ is essentially self-adjoint on $C^\infty_c(\Rt)^4$ is optimal, in fact for $|\nu|>\sqrt{3}/2$ $H_0+\V_C$ is not essentially self-adjoint and several self-adjoint extensions can be constructed. The main interest was the study, among all, of the most physically meaningful extension.
The first work is \cite{schmincke1972distinguished} by Schmincke: for $|\nu|\in\left(\frac{\sqrt{3}}{2},1\right)$ and by means of a multiplicative intercalary operator, he proved that $H_0+\V_C$ admits a unique self-adjoint extention $H_S$ such that
\begin{equation}\label{eq:schminke}
\D(H_S) \subset \D(r^{-1/2})=\seq{\psi\in L^2(\Rt)^4:|x|^{-1/2}\psi\in L^2(\Rt)^4}.
\end{equation}
%that is the potential energy is finite.
Another explicit construction of a distinguished self-adjoint extension was made by W\"ust in \cite{wust1975distinguished}: using a cut-off procedure, he built a sequence of  
self-adjoint operators that converges
strongly in the operator graph topology to a self-adjoint extension of  $H_0+\V_C$, whose domain is contained in $\D (r^{-1/2})$.
Moreover in \cite{nenciu1976self}, Nenciu proved the existence of a unique self-adjoint extension of $H_0+\V_C$ whose domain is contained in the Sobolev space $H^{1/2}(\Rt)^4$.
% that is all states in the domain of the self-adjoint extension have finite kinetic energy.
Finally, Klaus and W\"ust showed in \cite{klaus1979characterization} that these self-adjoint extensions coincide.
We also cite \cite{burnap1981dirac}: in this work, using the partial wave decomposition and the Von Neumann theory, the authors could characterize the distinguished self-adjoint extension by the fact that the energy of the ground state is continuous in $\nu$. 
In \cite{gallonemichelangeli2017self}, applying the Kre\u{\i}n-Vi\v{s}ik-Birman extension theory, Gallone and Michelangeli described the self-adjointness of $H_0+\V_C$ for $\nu<1$, in terms of boundary conditions at the origin,
and in \cite{gallone2017discrete} they determine the discrete spectrum of such extensions. 

In \cite{arai1983essential}, Arai considered matrix-valued potentials as in \eqref{eq:def.V}.
Defining
\begin{equation}\label{eq:delta_k}
\delta:=(k+\lambda)^2-\nu^2+\mu^2,\quad \text{for fixed}\ k\in \mathbb{Z}\setminus\seq{0},
\end{equation}
he proved that a necessary and sufficient condition for the essential self-adjointness of $H_0+\V$ is $\delta\geq 1/4$ for any $k$.
This proved that, in the case of general matrix valued potentials, the threshold $1/2$ is optimal for the essential self-adjointness.
For $\delta>0$ for all $k$, he proved that the operator admits infinitely many self-adjoint extensions.
Kato in \cite{kato1983holomorphic} considered a general $4\times 4$ matrix-valued measured function $\V$ such that for any $x\neq 0$,  $|\V_{i,j}(x)|\leq |x|^{-1}$.
Setting $H(\kappa):=H_0+\kappa\V$, he constructed a unique \emph{holomorphic family} of self-adjoint operators for $|\kappa|<1$, which reduced to the self-adjioint operator $H_0+\kappa\V$ defined on $H^1(\Rt)^4$ for $|\kappa|<1/2$.
Moreover he proved that, in the case of $\V=\V_C=\frac{1}{|x|}\I_4$, this family coincides with the distinguished self-adjoint extension defined by W\"ust and Nenciu.
With a similar idea, in \cite{adv2013self} Arrizabalaga, Duoandikoetxea and Vega were able to characterize the distinguished self-adjoint extension by means of the 
%$4$--spinor Hardy-type 
\emph{Kato-Nenciu} inequality
\begin{equation*}
\int_{\Rt}\frac{|\psi|^2}{|x|}\,dx \leq \int_{\Rt}\abs*{(-i\alpha\cdot\nabla+m\beta\pm i)\psi}^2|x|\,dx, \quad\text{for}
\ \psi\in C^\infty_c(\Rt)^4.
\end{equation*}

The self-adjointness in the range of critical values $|\nu|\geq 1$ has been the aim of several recent works: in the case of the Coulomb potential and using the spherical symmetry of the potential, with different approaches Xia in \cite{xia1999contribution}, Voronov in \cite{voronov2007dirac}, Hogreve in \cite{hogreve2012overcritical}
% and Gallone and Michelangeli in \cite{gallonemichelangeli2017self} 
could characterize via boundary conditions all the self-adjoint extensions. 
In \cite{estebanloss}, Esteban and Loss could consider a general electrostatic potential, that is  a
function $V:\Rt\to \R$ such that that for some constant $c(V)\in (-1,1)$, $\Gamma:=\sup(V)<1+c(V)$ and for every $\varphi\in C^\infty_c(\Rt,\C^2)$,
\begin{equation}\label{eq:diseq:EL}
\int_{\Rt}\left( \frac{|\sigma\cdot\nabla \varphi|^2}{1+c(V)-V}+\left(1+c(V)+V\right)|\varphi|^2\right) dx\geq 0.
\end{equation}
Setting $\V:= V \I_4$, they proved that the operator $H_0+\V$ is self-adjoint on a suitable domain.
Although the free Dirac
operator is not semi-bounded,
they defined a \emph{reduced} operator acting only on the two first components of
the wave function, for which the Friedrichs extension can be defined thanks the inequality \eqref{eq:diseq:EL}.
Once this is done, they extended the \emph{whole} operator in a straightforward way.
This allows treating all the potentials of the form $V(x)=-\frac{\nu}{|x|}$ for $\nu\in(0,1]$.
In the sub-critical case, i.~e.~$0<\nu<1$, the self-adjoint extension that they described coincides with the distinguished self-adjoint extension given by W\"ust and Nenciu;
in the critical case, i.~e.~$\nu=1$,  they stated that the distinguished the self-adjoint extension that they are describing is the distinguished one since it can be covered by continuous prolongation of the sub-critical case. 
Recently, in \cite{esteban2017domains}, Esteban, Lewin and S\'er\'e have given more properties of this  domain:
they showed that the self-adjoint extension given by Esteban and Loss could be obtained as the limit of the cut-off procedure and, in the Coulomb case, it is the only extension containing the \emph{ground states}.

%Setting
%\begin{equation}
%  \mathbb{V}_{el}:=v_{el}(x) \mathbb{I}_4,\quad 
%  \mathbb{V}_{sc} :=v_{sc}(x) \beta,\quad\text{and}\quad
%  \mathbb{V}_{am} :=
%  v_{am}(x) \left(-i \alpha\cdot\frac{x}{\abs{x}}\,\beta \right),
%\end{equation}
%for real valued $v_{el},v_{sc}, v_{am}$, the potentials $\mathbb{V}_{el}, \mathbb{V}_{sc},  \mathbb{V}_{am}$ are 
%said respectively
%an \emph{electric}, \emph{scalar}, and \emph{anomalous magnetic} potential. 
The aim of this paper is to give a simple and unified approach to the problem of the self-adjointness of 
$H:=H_0 + \V$, with $\V$ as in \eqref{eq:def.V}.
This particular choice of the class of potentials is related to the fact that the action of $H_0+\V$ leaves invariant the \emph{partial wave subspaces}.
In detail, setting
\begin{equation*}
  \mathbb{V} = 
  \mathbb{V}_{el} + 
  \mathbb{V}_{sc} + 
  \mathbb{V}_{am} :=
  v_{el}(x) \mathbb{I}_4 +
  v_{sc}(x) \beta +
  v_{am}(x) \left(-i \alpha\cdot\frac{x}{\abs{x}}\,\beta \right),
\end{equation*}
for real valued $v_{el},v_{sc}, v_{am}$, the potentials $\mathbb{V}_{el}, \mathbb{V}_{sc},  \mathbb{V}_{am}$ are 
said respectively
an \emph{electric}, \emph{scalar}, and \emph{anomalous magnetic} potential. 

The strategy of the proof is to consider the self-adjointness of the reduction of $H_0+\V$ to the \emph{partial wave subspaces} and, using weighted Hardy-type inequalities and trace theorems, we describe the domain of the \emph{maximal operator}, namely the set of functions $\psi\in L^2$ such that $H\psi\in L^2$.
Then, we describe the domains of the self-adjoint extensions by means of boundary conditions at the origin.

Although we consider a specific class of potentials, still a complete description of the phenomena was not available.
In fact, Arai in \cite{arai1983essential}, analysed potentials as in \eqref{eq:def.V} and he connected the problem of self-adjointess to the quantity $\delta$ defined in \eqref{eq:delta_k}. 
But still, he could only analyse the cases in which $\delta>0$ for any $k>0$:
we do not add any restriction on $\delta$.
% and we give a complete description of all the admissible self-adjoint extensions.

In this context the case $\delta>0$ is sub-critical, while it is 
critical if $\delta=0$ for some $k$ and supercritical if $\delta< 0$ for some $k$.
This formulation of criticality is different from the one in \cite{kato1983holomorphic,  adv2013self, arrizabalaga2011distinguished}
but it appears to be suited to this problem, where a particular structure of $\V$ is assumed.
In fact, in the particular case that $\lambda=\nu=0$ and $\V=\frac{\mu}{|x|}\beta$ for all $\mu\in\R$,
the operator
$H$ is essentially self-adjoint on $C^\infty_c(\Rt)^4$ and self-adjoint on $\D(H_0)=H^1(\Rt)^4$, 
see \Cref{cor:lorentz}.

Finally we focus on the distinguished self-adjoint extension: we give a precise description of the domain of the
distinguished self-adjoint extension for $H$ in the sub-critical and critical cases.
In the sub-critical case our results refine the known theory: 
Schmincke's condition (see \ref{eq:schminke}) selects a self-adjoint extension
and we prove that the functions in its domain fulfil
an improved integrability condition.
Moreover, from the algebra of the problem we select a suitable linear combination of both components of the spinor: 
we show that the distinguished self-adjoint extension can be characterized by the fact that this linear combination belongs to $H^1$
 (see \Cref{thm:distinguished.gamma<12,})
and we extend continuously this condition to the critical case for $(\nu,\mu)\neq 0$ in \eqref{eq:def.V}
(see \Cref{thm:distinguished.gamma=0}).
With this definition and in the case of Coulomb potentials, we will show that distinguished
self-adjoint extension is the unique one that has no logarithmic decay at the origin and so it coincides with the self-adjoint extension defined by Esteban and Loss in in \cite{estebanloss}, see \Cref{rem:dist.no.log}.
In the critical case and for $\nu=\mu=0$ we can not define the distinguished self-adjoint extension: in this very particular case a coherent definition of distinguished self-adjoint extension can not be given, see \Cref{rem:dist.anomalous.magn.critical}.

In order to state our results we need to introduce some notations and well known results.
It is well-known  that the free Dirac operator $H_0$ is essentially self-adjoint on $C^\infty_c(\Rt)^4$ and self-adjoint on $\D (H_0):=H^1(\Rt)^4$, see \cite[Theorem 1.1]{thaller}.
We define the \emph{maximal operator} $H_{max}$ as follows:
\begin{equation}\label{eq:defn.maximal.operator} 
    \mathcal{D}(H_{max}):=\seq{\psi\in L^2(\Rt)^4: H\psi\in L^2(\Rt)^4}, 
    \quad
    H_{max} \psi :=H \psi \quad \text{for }\psi \in \mathcal{D}(H_{max}),
\end{equation}
where $H \psi\in L^2(\Rt)^4$ has to be read in the distributional sense:
the linear form 
$\ell_\psi: \varphi\in C^\infty_c(\Rt)^4 \mapsto \int_{\Rt}\psi\,\overline{H\varphi}\,dx$ 
admits a unique extension $\hat{\ell}_\psi$ defined on $L^2(\Rt)^4$ and
by the Riesz Theorem there exists a unique $H_{max}\psi:=\eta\in L^2(\Rt)^4$, 
such that $\hat{\ell}_{\psi}(\cdot)=\langle\eta,\cdot\rangle_{L^2}$.
From \eqref{eq:classicalhardy} it follows that
\begin{equation}\label{eq:liberocontmax}
\D(H_0)\subset\D (H_{max}).
\end{equation}
We define the \emph{minimal operator} $H_{min}$ as follows:
\begin{equation}\label{eq:def.minimal.operator} 
    \mathcal{D}(\Hm):=C^\infty_c(\Rt), 
    \quad 
    H_{min}\psi :=H \psi, \quad \text{for }\psi \in \mathcal{D}(H_{min}).
\end{equation}
It is easy to see that $H_{min}$ is symmetric and $(H_{min})^*=H_{max}$.
Finally, we define $\mathring{H}_{min}$ as follows:
\begin{equation*}\label{eq:def.minimal.operator.o} 
    \mathcal{D}(\mathring{H}_{min}):=C^\infty_c(\Rt\setminus \{0\}), 
    \quad 
    \mathring H_{min}\psi :=H_{min} \psi, \quad \text{for }\psi \in \mathcal{D}(\mathring H_{min}).
\end{equation*}
The operator $\mathring H_{min}$ is symmetric and, for all
  $\psi \in \mathcal{D}(\mathring H_{min})$, $\mathring H_{min}\psi$
  is evaluated in the classical sense.  We remark that
  $\overline{H_{min}}=\overline{\mathring H_{min}}$ (see 
  \cite[Remark 1.1]{arai1983essential}): in particular $({H_{min}})^*=({\mathring H_{min}})^*=H_{max}$.

In this paper we describe self-adjoint extensions $T$ of the minimal operator $H_{min}$.
$T$ is consequently a restriction of the maximal operator, i.e.~
\begin{equation*}
  H_{min} \subseteq T=T^*  \subseteq  H_{max}.
\end{equation*}
In fact the main focus of this paper is studying in detail the restrictions of the maximal operator $H_{max}$.
Following this program, we understand the behaviour of $T$
on the so called \emph{partial wave subspaces} associated to the Dirac equation:
such spaces are left inviariant by $H_0$ and potentials $\mathbb V$ in the class considered in \eqref{eq:def.V}.
We sketch here this topic, referring to \cite[Section 4.6]{thaller} for further details.

Let $Y^l_n$ be the spherical harmonics. They are defined for $n = 0, 1, 2, \dots$, and $l =-n,-n + 1,\dots , n,$ and they satisfy $\Delta_{\mathbb{S}^2} Y^l_n= n(n + 1)Y^l_n$, where $\Delta_{\mathbb{S}^2}$ denotes the usual spherical Laplacian. Moreover, $Y^l_n$ form a complete orthonormal set in $L^2(\mathbb{S}^2)$.
For $j = 1/2, 3/2, 5/2, \dots , $ and $m_j = -j,-j + 1, \dots , j$, set
\begin{align*}
\psi^{m_j}_{j-1/2}&:=
\frac{1}{\sqrt{2j}}
\left(\begin{array}{c}
\sqrt{j+m_j}\,Y^{m_j-1/2}_{j-1/2}\\
\sqrt{j-m_j}\,Y^{m_j+1/2}_{j-1/2}\\
\end{array}\right),
\\
\psi^{m_j}_{j+1/2}&:=\frac{1}{\sqrt{2j+2}}
\left(\begin{array}{c}
\sqrt{j+1-m_j}\,Y^{m_j-1/2}_{j+1/2}\\
-\sqrt{j+1+m_j}\,Y^{m_j+1/2}_{j+1/2}\\
\end{array}\right);
\end{align*}
then  $\psi^{m_j}_{j\pm1/2}$ form a complete orthonormal set in $L^2(\mathbb{S}^2)^2$.
Moreover, we set 
\begin{equation*}
r=|x|,\quad\hat x = x / |x|\quad\text{and}\quad
L=-ix\times\nabla\quad\text{for }x \in \Rt\setminus\{0\}.
\end{equation*}
Then
\begin{equation*}
(\sigma\cdot\hx)\psi^{m_j}_{j\pm 1/2}=\psi^{m_j}_{j\mp1/2},
\quad\text{and}\quad
(1+\sigma\cdot L)\psi^{m_j}_{j\pm 1/2}=\pm(j+1/2)\psi^{m_j}_{j\pm 1/2},
\end{equation*}
where $\sigma=(\sigma_1,\sigma_2,\sigma_3)$ is the vector of \textit{Pauli's matrices}.
For $k_j:=\pm(j+1/2)$ we set
\begin{equation*}
\Phi^+_{m_j,\pm(j+1/2)}:=
\left(\begin{array}{c}
i\,\psi^{m_j}_{j\pm1/2}\\
0
\end{array}\right),
\quad
\Phi^-_{m_j,\pm(j+1/2)}:=
\left(\begin{array}{c}
0\\
\psi^{m_j}_{j\mp1/2}
\end{array}\right).
\end{equation*}
Then, the set $\{\Phi^+_{m_j,k_j},\Phi^-_{m_j,k_j}\}_{j,k_j,m_j}$ is a 
complete orthonormal basis of $L^2(\mathbb{S}^2)^4$. 

We define now the following space:
\begin{equation*}
\mathcal{H}_{m_j,k_j}:=
\seq*{ 
%\psi(x)=
\frac{1}{r}\left(
f^+_{m_j,k_j}(r)\Phi^+_{m_j,k_j}(\hx)+
f^-_{m_j,k_j}(r)\Phi^-_{m_j,k_j}(\hx)\right)
\in L^2(\Rt)
\mid
f^\pm_{m_j,k_j}\in L^2(0,+\infty)}.
\end{equation*}
From \cite[Theorem 4.14]{thaller} we know that the operators $H_0$, $\mathring H_{min}$ and $H_{max}$ leave the partial wave subspace $\H_{m_j,k_j}$ invariant and their action can be decomposed in terms of the basis $\seq{\Phi^+_{m_j,k_j},\Phi^-_{m_j,k_j}}$ as follows:
\begin{align*}
H_0&\cong
\bigoplus_{j=\frac{1}{2},\frac{3}{2},\dots}^\infty\,\bigoplus_{m_j=-j}^j\,
\bigoplus_{k_j=\pm(j+1/2)}\,
h^0_{m_j,k_j},
\\
\mathring H&\cong
\bigoplus_{j=\frac{1}{2},\frac{3}{2},\dots}^\infty\,\bigoplus_{m_j=-j}^j\,
\bigoplus_{k_j=\pm(j+1/2)}\,
h_{m_j,k_j},
\\
 H^*&\cong
\bigoplus_{j=\frac{1}{2},\frac{3}{2},\dots}^\infty\,\bigoplus_{m_j=-j}^j\,
\bigoplus_{k_j=\pm(j+1/2)}\,
{h}^*_{m_j,k_j},
\end{align*}
where ``$\cong$'' means that the operators are unitarily equivalent, and the action of $H_0$ is represented by
\begin{equation*}
\begin{split}
{D}( h^0_{m_j,k_j})&= 
 \seq*{(f^+,f^-)\in L^2(0,+\infty)^2: \ 
\left(
\de_r\pm \frac{k_j}{r}
\right)f^\pm
\in 
L^2(0,+\infty)},\\
h^0_{m_j,k_j}
(
  f^+, f^-)&:
=
\begin{pmatrix}
m & -\partial_r+\frac{k_j}{r}\\
\partial_r+\frac{k_j}{r} & -m
\end{pmatrix}
\begin{pmatrix}
  f^+ \\
 f^-
\end{pmatrix},
\end{split}
\end{equation*}
the action of $\mathring H_{min}$ is represented by
\begin{equation}\label{eq:dirac.spherical}
{D}(  h_{m_j,k_j})= C^\infty_c(0,+\infty)^2,
\quad
h_{m_j,k_j}
( f^+,  f^-):
=\left(
\begin{array}{cc}
m+\frac{\nu+\mu}{r} & -\partial_r+\frac{k_j+\lambda}{r}\\
\partial_r+\frac{k_j+\lambda}{r} & -m+\frac{\nu-\mu}{r}
\end{array}\right)
\begin{pmatrix}
  f^+ \\
 f^-
\end{pmatrix}.
\end{equation}
and the action of $H_{max}$ is represented by
\begin{equation}\label{eq:dirac.spherical*}
\begin{split}
{D}( h^*_{m_j,k_j})&= \seq{(f^+,f^-)\in L^2(0,+\infty): h^*_{m_j,k_j}(f^+,f^-) \in L^2(0,+\infty)^2},\\
h^*_{m_j,k_j}
(
  f^+, f^-)&:
=\left(
\begin{array}{cc}
m+\frac{\nu+\mu}{r} & -\partial_r+\frac{k_j+\lambda}{r}\\
\partial_r+\frac{k_j+\lambda}{r} & -m+\frac{\nu-\mu}{r}
\end{array}\right)
\begin{pmatrix}
  f^+ \\
 f^-
\end{pmatrix}.
\end{split}
\end{equation}
where $h^*_{m_j,k_j}(f^+,f^-)$ has to be read in the distributional sense as done in \eqref{eq:defn.maximal.operator}.

Finally, by construction, $h^*_{m_j,k_j}$ is the adjoint of $h_{m_j,k_j}$.

In this framework the operator $T$ can be decomposed as
\begin{equation*}
    T \cong \bigoplus_{j=\frac{1}{2},\frac{3}{2},\dots}^\infty\,
    \bigoplus_{m_j=-j}^j\, \bigoplus_{k_j=\pm(j+1/2)}\,
    t_{m_j,k_j}.
\end{equation*}
We will characterize all the self-adjoint operators $T$ such that every $t_{m_j,k_j}$ is 
sef-adjoint: this property is linked to the quantity
\begin{equation}
  \label{eq:defn.delta}
  \delta=\delta(k_j,\lambda, \mu, \nu):=(k_j+\lambda)^2+\mu^2-\nu^2.
\end{equation}
We can now state the following theorems, main results of this paper.
In these we fix $j\in \seq{1/2,3/2,\dots}, m_j\in\seq{-j,\dots,j}$, $k_j\in\seq{j+1/2,-j-1/2}$
and let $\delta$ as in \eqref{eq:defn.delta}.

\begin{theorem}[Case $\delta \geq 1/4$]
\label{thm:Hk.good}
Let $h_{m_j,k_j}$ and $h^*_{m_j,k_j}$ be defined respectively as in \eqref{eq:dirac.spherical} and \eqref{eq:dirac.spherical*}
%\eqref{eq:def.dirac} and \eqref{eq:def.V}
%respectively, 
for $\nu,\mu,\lambda\in \R$, and $\delta\in\R$ as in \eqref{eq:defn.delta}.
Assume $\delta \geq \frac14$ and set $\gamma:=\sqrt{\delta}$.
The following hold:
  \begin{enumerate}[label=(\roman*)]
  \item\label{thm:Hk.gamma>12} If $\gamma > \frac12$ then $h_{m_j,k_j}$ is essentially self-adjoint on  
    $C_c^{\infty}(0,+\infty)^2$
    and 
	\begin{equation*}
    \D(\overline{h_{m_j,k_j}})=\D(h^0_{m_j,k_j}).
    \end{equation*}
      \item\label{thm:Hk.gamma=12} If $\gamma= \frac12$ then $h_{m_j,k_j}$ is essentially self adjoint on 
    $C_c^{\infty}(0,+\infty)^2$ and 
    \begin{equation*}
    \mathcal{D}(h^0_{m_j,k_j})\subset \D(\overline{h_{m_j,k_j}}).
    \end{equation*}
  \end{enumerate}
\end{theorem}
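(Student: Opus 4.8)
The plan is to analyse the formal radial expression directly on the half-line, writing
$h_{m_j,k_j}=J\de_r+\tfrac1r M+m\sigma_3$ on $(0,+\infty)$, with $J=\left(\begin{smallmatrix}0&-1\\1&0\end{smallmatrix}\right)$ and $M=\left(\begin{smallmatrix}\nu+\mu&k_j+\lambda\\k_j+\lambda&\nu-\mu\end{smallmatrix}\right)$; by \eqref{eq:defn.delta} one has $\det M=\nu^2-\mu^2-(k_j+\lambda)^2=-\delta=-\gamma^2$, and the free radial operator $h^0_{m_j,k_j}$ is obtained by replacing $M$ with the off-diagonal matrix $k_j\left(\begin{smallmatrix}0&1\\1&0\end{smallmatrix}\right)$. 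Since $h^*_{m_j,k_j}$ is the adjoint of $h_{m_j,k_j}$, essential self-adjointness is equivalent to $\D(\overline{h_{m_j,k_j}})=\D(h^*_{m_j,k_j})$; and because $h_{m_j,k_j}-h^0_{m_j,k_j}=\tfrac1r M$ is a constant matrix times $r^{-1}$, both domain assertions reduce to comparing $\D(h^*_{m_j,k_j})$ with $\D(h^0_{m_j,k_j})$, i.e.\ to deciding when $\tfrac1r Mu\in L^2$. I would exploit that every rotation $R\in SO(2)$ commutes with $J$, so a constant orthogonal conjugation diagonalises $M$ into $\mathrm{diag}(\lambda_+,\lambda_-)$ with $\lambda_+\lambda_-=-\gamma^2$, decoupling all the weighted estimates into two scalar radial problems with the critical weight $r^{-2}$.

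The easy inclusion uses only $\abs{k_j}\ge 1$. The radial identity $\int_0^\infty\abs{(\de_r+\tfrac cr)f}^2\,dr=\int_0^\infty\abs{f'}^2\,dr+(c^2+c)\int_0^\infty r^{-2}\abs{f}^2\,dr$ for $f\in C^\infty_c(0,+\infty)$, combined with the one-dimensional Hardy inequality $\tfrac14\int_0^\infty r^{-2}\abs{f}^2\le\int_0^\infty\abs{f'}^2$, gives for each derivative block the bound with constant $(\abs{k_j}\mp\tfrac12)^2\ge\tfrac14$; controlling the bounded mass terms by $\norm{u}^2$, this yields the continuous embedding $\D(h^0_{m_j,k_j})\immcont L^2((0,+\infty),r^{-2}dr)^2$. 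Hence $\tfrac1r Mu\in L^2$ for every $u\in\D(h^0_{m_j,k_j})$, so $h^*_{m_j,k_j}u=h^0_{m_j,k_j}u+\tfrac1r Mu\in L^2$ and $\D(h^0_{m_j,k_j})\subseteq\D(h^*_{m_j,k_j})$. This is valid whenever $\abs{k_j}\ge 1$ and, after the core argument below, already produces the inclusion claimed in statement~\ref{thm:Hk.gamma=12}.

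For the converse inclusion I would prove a weighted Hardy inequality for the \emph{full} operator, $(\gamma^2-\tfrac14)\int_0^\infty r^{-2}\abs{u}^2\,dr\leqc\norm{h_{m_j,k_j}u}^2+\norm{u}^2$ for $u\in C^\infty_c(0,+\infty)^2$. After diagonalising $M$ this decouples, and the homogeneity (degree $-1$) of the singular part makes the sharp constant computable by the substitution $r=e^t$, $u(r)=e^{-t/2}v(t)$, which turns $J\de_r+\tfrac1r M$ into a constant-coefficient operator whose indicial roots are the zeros $\pm\gamma$ of $s^2=\gamma^2$; the gap $\gamma^2-\tfrac14$ is precisely the distance keeping these roots out of the critical strip $\abs{\Re s}\le\tfrac12$. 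When $\gamma>\tfrac12$ the constant is positive, and after extending the inequality from $C^\infty_c(0,+\infty)^2$ to $\D(h^*_{m_j,k_j})$ we get $\int_0^\infty r^{-2}\abs{u}^2<\infty$, whence $\tfrac1r Mu\in L^2$ and $h^0_{m_j,k_j}u=h^*_{m_j,k_j}u-\tfrac1r Mu\in L^2$, i.e.\ $\D(h^*_{m_j,k_j})\subseteq\D(h^0_{m_j,k_j})$. Together with the previous step this gives $\D(h^*_{m_j,k_j})=\D(h^0_{m_j,k_j})$. Both inequalities make the graph norms of $h_{m_j,k_j}$ and $h^0_{m_j,k_j}$ equivalent on $\D(h^0_{m_j,k_j})$, so $C^\infty_c(0,+\infty)^2$, being a core for $h^0_{m_j,k_j}$, is also a core for $h_{m_j,k_j}$; therefore $\D(\overline{h_{m_j,k_j}})=\D(h^0_{m_j,k_j})=\D(h^*_{m_j,k_j})$, which is exactly statement~\ref{thm:Hk.gamma>12} and in particular gives essential self-adjointness.

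At $\gamma=\tfrac12$ the constant $\gamma^2-\tfrac14$ degenerates and the full inequality no longer controls $\int r^{-2}\abs{u}^2$ (functions in $\D(h^*_{m_j,k_j})$ may behave like $r^{1/2}$ up to logarithmic corrections, for which that integral diverges), which is why only the one-sided inclusion $\D(h^0_{m_j,k_j})\subseteq\D(\overline{h_{m_j,k_j}})$ survives. Essential self-adjointness must then be read off from the endpoints: at $+\infty$ the mass term forces the limit-point case (a single $L^2$ solution of $(h^*_{m_j,k_j}-z)u=0$ for $\Im z\neq0$), while at $0$ the indicial analysis gives the two behaviours $r^{\pm 1/2}$, of which only $r^{1/2}$ is square-integrable near the origin, so that the origin is limit point as well and the deficiency indices vanish. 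I expect the main obstacle to be precisely this sharp weighted Hardy inequality and its passage to the maximal domain: establishing the optimal constant $\gamma^2-\tfrac14$ for the non-diagonal, mass-perturbed operator, and justifying that the boundary contributions produced by cutting off near $r=0$ tend to zero (this is where the trace lemma is needed), together with the borderline $\gamma=\tfrac12$, where the indicial roots $\pm\tfrac12$ differ by the integer $1$, a logarithmic solution may appear, and essential self-adjointness has to be argued from square-integrability of solutions rather than from a spectral gap.
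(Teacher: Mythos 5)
Your overall reduction (compare $\mathcal{D}(h^*_{m_j,k_j})$ with $\mathcal{D}(h^0_{m_j,k_j})$, prove the easy inclusion via the radial identity plus Hardy, prove a weighted Hardy inequality for the full operator for the converse) is close in spirit to the paper, and the easy inclusion is essentially fine. The genuine gap is in part \emph{(i)}, at the step ``after extending the inequality from $C^\infty_c(0,+\infty)^2$ to $\mathcal{D}(h^*_{m_j,k_j})$''. This extension cannot be done by density: graph-norm density of $C^\infty_c(0,+\infty)^2$ in $\mathcal{D}(h^*_{m_j,k_j})$ is \emph{exactly} the essential self-adjointness you are trying to prove, so invoking it is circular; an inequality proved on $C^\infty_c$ extends for free only to $\mathcal{D}(\overline{h_{m_j,k_j}})$, never to the maximal domain. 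Your fallback mechanism (cut off near $r=0$ and show the boundary contributions vanish) does not close it either: truncating $u\in \mathcal{D}(h^*_{m_j,k_j})$ at scale $\epsilon$ produces a commutator term of size $\int_\epsilon^{2\epsilon} r^{-2}\abs{u}^2\,dr$, i.e.\ a piece of the very integral you are trying to bound, and you have no a priori decay of $u$ at the origin to make it small. Moreover, the trace lemma you appeal to cannot be fed at this stage: \Cref{prop:hardy1d} needs a weighted $L^2$ bound on a \emph{derivative}, and your $SO(2)$ conjugation does not produce one, because it diagonalises the symmetric Coulomb matrix while leaving the derivative coupling $J\partial_r$ off-diagonal: the two components remain coupled, and no scalar condition of the form $\int_0^{+\infty}\abs{r^{\pm\gamma}\partial_r(r^{\mp\gamma}\varphi^\pm)}^2dr<+\infty$ is available. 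With the tools you actually establish, you can conclude $\mathcal{D}(\overline{h_{m_j,k_j}})=\mathcal{D}(h^0_{m_j,k_j})$ (graph-norm equivalence on $C^\infty_c$ plus the core property of $h^0_{m_j,k_j}$), but not that the maximal domain is no larger --- which is the content of the theorem.

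This is precisely where the paper's proof does its real work, with a different algebraic transformation. In \eqref{eq:rappr.M} it diagonalises not the symmetric potential matrix but the (non-symmetric, trace-free) \emph{system} matrix of the first-order ODE, whose determinant is $-\delta$ and whose eigenvalues are $\pm\gamma$; the resulting intertwining \eqref{eq:dopocommuta} converts the mere statement $(f^+,f^-)\in\mathcal{D}(h^*_{m_j,k_j})$ into two genuinely decoupled scalar conditions \eqref{eq:radiali} for $\varphi^\pm=M(f^+,f^-)$. To these, \Cref{prop:hardy1d} and \Cref{prop:hardy1dhardy} --- which are stated for arbitrary distributions with that integrability, with boundary values treated explicitly --- apply directly; $L^2$-ness of $\varphi^-$ then forces the coefficient of the $r^{-\gamma}$ mode to vanish when $\gamma>\frac12$, giving $\int_0^{+\infty} r^{-2}(\abs{f^+}^2+\abs{f^-}^2)\,dr<+\infty$ for \emph{every} maximal-domain function, hence $\mathcal{D}(h^*_{m_j,k_j})=\mathcal{D}(h^0_{m_j,k_j})$ and self-adjointness (see \Cref{thm:caract:d(h*)}). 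If you want to keep your framework, the natural repair is to run the Weyl limit-point argument at $r=0$ also for $\gamma>\frac12$ (solutions behave like $r^{\pm\gamma}$ and $r^{-\gamma}\notin L^2(0,1)$ for all $\gamma\geq\frac12$), as you already propose for $\gamma=\frac12$; combined with your graph-norm equivalence this would complete \emph{(i)}. Two lesser points: the claimed sharp constant $\gamma^2-\frac14$ in the full Hardy inequality is not correct uniformly in the parameters (it degenerates, e.g., for large $\nu$ at fixed $\delta$), though the qualitative positivity for $\gamma>\frac12$, which is all your ``$\lesssim$'' needs, is true; and your limit-point treatment of $\gamma=\frac12$ is a legitimate alternative to the paper's argument (which instead proves that $h^*_{m_j,k_j}$ is symmetric using $\liminf_{r\to0}f^+(r)\overline{f^-(r)}=0$), but as written it is a sketch that still requires the Frobenius analysis you defer.
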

%secondo incollaggio
\begin{theorem}[Case $0 \leq \delta < 1/4$]
\label{thm:Hk.subandcritical}
% Let $H:=H_0+\mathbb V (x)$, with $H_0$ and $\mathbb V$ as in \eqref{eq:def.dirac} and \eqref{eq:def.V}
% respectively, 
% for $\nu,\mu,\lambda\in \R$. 
% Fix $j\in \seq{1/2,3/2,\dots}, m_j\in\seq{-j,\dots,j}$, $k_j\in\seq{j+1/2,-j-1/2}$
Under the same assumptions of \Cref{thm:Hk.good}, assume 
$0 \leq \delta < \frac14$ and set $\gamma:=\sqrt{{\delta}}$.
The following hold:
  \begin{enumerate}[label=(\roman*)]
  \item\label{thm:Hk.gamma<12} If $0<\gamma < 1/2$ there is a one (real) parameter family 
    $\left(t(\theta)_{m_j,k_j}\right)_{\theta \in [0,\pi)}$
    of self-adjoint extensions  $h_{m_j,k_j}\subset t(\theta)_{m_j,k_j}=t(\theta)_{m_j,k_j}^* \subset h^*_{m_j,k_j}$.
Moreover $(f^+, f^-)\in
   \mathcal{D}\left(t(\theta)_{m_j,k_j}\right)$ if and only if
    $(f^+, f^-)\in
   \mathcal{D}(h^*_{m_j,k_j})$  and
    there exists $(A^+,A^-)\in \C^2$ such that     
\begin{equation}\label{eq:defn.Hk.<1/2}
\begin{split}
A^+\sin \theta  &+ A^-\cos \theta =0,\\
                \lim\limits_{r\to 0}
         \Bigg|
          \begin{pmatrix} 
          f^+(r) \\ 
          f^-(r) 
          \end{pmatrix} 
          &- D 
          \begin{pmatrix}
           A^+ r^{\gamma} \\
            A^- r^{-\gamma}
          \end{pmatrix}\Bigg|
          r^{-1/2}
          =0,
          \end{split}
          \end{equation}
where $D\in \R^{2\times 2}$ is invertible and
\begin{equation}
  \label{eq:defn.D}
  D:= 
  \begin{cases}	
 \frac{1}{2\gamma(\lambda + k - \gamma)}
  \begin{pmatrix}
    \lambda + k_j - \gamma & \nu-\mu \\
     -(\nu + \mu)        & -(\lambda + k_j - \gamma)
  \end{pmatrix}
  \quad &\text{ if }\lambda + k_j - \gamma \neq 0,\\
    \frac{1}{-4\gamma^2}
    \begin{pmatrix}
      \mu - \nu            & 2\gamma \\
     2\gamma & -(\nu + \mu) 
    \end{pmatrix}
    \quad &\text{ if }\lambda + k_j - \gamma = 0.
\end{cases}
\end{equation}
Conversely, any self-adjoint extension $t_{m_j,k_j}$ of $h_{m_j,k_j}$ verifies $t_{m_j,k_j}=t(\theta)_{m_j,k_j}$ for some $\theta \in [0,\pi)$.

\item\label{thm:Hk.gamma=0} If $\gamma=0$ 
 there is a one (real) parameter family 
    $\left(t(\theta)_{m_j,k_j}\right)_{\theta \in [0,\pi)}$
    of self-adjoint extensions  $h_{m_j,k_j}\subset t_{m_j,k_j}(\theta)=t_{m_j,k_j}(\theta)^* \subset {h^*}_{m_j,k_j}$. 
    Moreover $(f^+, f^-)\in
   \mathcal{D}\left(t(\theta)_{m_j,k_j}\right)$ if and only if 
       $(f^+, f^-)\in
   \mathcal{D}(h^*_{m_j,k_j})$  and
    there exists $(A^+,A^-) \in \C^2$ such that  
    \begin{equation}    \label{eq:defn.Hk.=0}
    \begin{split}
    A^+\sin \theta  &+ A^-\cos \theta =0,\\    
              \lim\limits_{r\to 0} 
          \Bigg|\begin{pmatrix} f^+(r) \\ f^-(r) \end{pmatrix} 
          &- (M \log r + \mathbb{I}_2) 
          \begin{pmatrix}
            A^+ \\
            A^-
          \end{pmatrix}\Bigg|r^{-1/2}
          =0,
          \end{split}
    \end{equation}
with $M\in\R^{2\times 2}$, $M^2=0$ to be
\begin{equation}\label{eq:defn.M}
   M:= 
\begin{pmatrix}
  -(k_j+\lambda) & - \nu + \mu \\
  \nu + \mu           & k_j+\lambda
\end{pmatrix}.
 \end{equation}
Conversely, any self-adjoint extension $t_{m_j,k_j}$ of $h_{m_j,k_j}$ verifies $t_{m_j,k_j}=t(\theta)_{m_j,k_j}$ for some $\theta \in [0,\pi)$.
\end{enumerate}
\end{theorem}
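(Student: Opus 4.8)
The plan is to treat the radial operator $\h$ from \eqref{eq:dirac.spherical} as a singular first-order system on the half-line and classify its self-adjoint extensions by Weyl--von~Neumann theory, with the entire analysis concentrated at the endpoint $r=0$. First I would read off the local behaviour of solutions at the origin from the indicial equation of the Euler part of \eqref{eq:dirac.spherical*}: inserting $(f^+,f^-)=(a,b)r^s$ into $h^*_{m_j,k_j}(f^+,f^-)=0$ and dropping the subordinate mass term gives $(\nu+\mu)a+(k_j+\lambda-s)b=0$ and $(s+k_j+\lambda)a+(\nu-\mu)b=0$, whose solvability condition is $s^2=(k_j+\lambda)^2+\mu^2-\nu^2=\delta$, i.e. $s=\pm\gamma$. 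For $0<\gamma<\tfrac12$ both $r^{\gamma}$ and $r^{-\gamma}$ are square-integrable near $0$, so $r=0$ is of limit-circle type, while at $+\infty$ exactly one solution of $h^*_{m_j,k_j}u=zu$ is square-integrable for $\Im z\neq0$ (limit point); hence the deficiency indices are $(1,1)$ and a one-parameter family of self-adjoint extensions is expected. The columns of $D$ in \eqref{eq:defn.D} are precisely the two indicial eigenvectors, normalised so that $\det D=\big(2\gamma(k_j+\lambda-\gamma)\big)^{-1}\neq0$; when $k_j+\lambda-\gamma=0$ the first eigenvector must be extracted from the second indicial equation, which accounts for the second branch of \eqref{eq:defn.D}.

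The analytic heart of the argument — and the step I expect to be the main obstacle — is to upgrade this formal analysis to a genuine asymptotic expansion valid on the whole maximal domain: to show that for every $(f^+,f^-)\in\D(h^*_{m_j,k_j})$ there is a unique pair $(A^+,A^-)\in\C^2$ with $(f^+,f^-)=D\,(A^+r^\gamma,A^-r^{-\gamma})^{T}+o(r^{1/2})$ as $r\to0$. I would obtain this by writing $h^*_{m_j,k_j}(f^+,f^-)=\eta\in L^2$, solving by variation of parameters against the fundamental system $\{r^{\gamma}d_1,\,r^{-\gamma}d_2\}$, and controlling the resulting integrals by the weighted Hardy-type inequalities and trace lemmas of the paper. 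This is delicate precisely because $r^{-\gamma}$ is only barely $L^2$, so the coefficient $A^-$ is not controlled by the $L^2$-norm alone and the remainder must be shown to be $o(r^{1/2})$, which is smaller than both leading terms; this is exactly where the trace machinery is indispensable. Once the expansion holds, $(f^+,f^-)\mapsto(A^+,A^-)$ is a surjection $\D(h^*_{m_j,k_j})\to\C^2$ whose kernel is $\D(\overline{\h})$.

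Next I would compute the boundary form $W(u,v):=f^+\overline{g^-}-f^-\overline{g^+}$. Integrating the off-diagonal $\pm\de_r$ terms of \eqref{eq:dirac.spherical*} by parts gives, for $u=(f^+,f^-)$ and $v=(g^+,g^-)$ in $\D(h^*_{m_j,k_j})$,
\[
\langle h^*_{m_j,k_j}u,v\rangle-\langle u,h^*_{m_j,k_j}v\rangle=\lim_{R\to\infty}W(u,v)(R)-\lim_{r\to0}W(u,v)(r),
\]
and the limit-point property at infinity makes the first limit vanish. Substituting the expansion and using that the columns $d_1,d_2$ of $D$ are real, the diagonal contributions $r^{\pm2\gamma}$ drop out (since $W(d_i,d_i)=0$) and only the cross terms survive, so $\lim_{r\to0}W(u,v)=\det D\,(A^+\overline{B^-}-A^-\overline{B^+})$: a nonzero multiple of the symplectic form $\omega(A,B)=A^+\overline{B^-}-A^-\overline{B^+}$ on $\C^2$. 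Self-adjoint extensions therefore correspond exactly to the maximal isotropic subspaces of $(\C^2,\omega)$, which are the real lines $\{A^+\sin\theta+A^-\cos\theta=0\}$, $\theta\in[0,\pi)$; this yields \eqref{eq:defn.Hk.<1/2}. To finish I would verify that each such $t(\theta)_{m_j,k_j}$ is symmetric (two coefficient vectors on one real line are $\omega$-orthogonal) and maximal (the $\omega$-orthogonal of a Lagrangian line is itself, combined with the surjectivity of the coefficient map), and conversely that any self-adjoint $T$ with $\h\subseteq T\subseteq h^*_{m_j,k_j}$ has isotropic coefficient space and hence equals some $t(\theta)_{m_j,k_j}$.

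The case $\gamma=0$ runs in parallel, the only difference being that $s=0$ is a double indicial root and the Euler system carries a nontrivial Jordan block, so the second solution acquires a logarithm. This is encoded by $M$ in \eqref{eq:defn.M}, for which $\operatorname{tr}M=0$ and $\det M=-\delta=0$, whence $M^2=0$ by Cayley--Hamilton, and the two solutions are $(\I_2+M\log r)(A^+,A^-)^{T}$. The boundary-form computation still closes: the $(\log r)^2$ term vanishes because $M\!A$ and $M\!B$ lie in the one-dimensional real range of $M$, and the $(\log r)^1$ term vanishes because $JM+M^{T}J=0$ for $J=\left(\begin{smallmatrix}0&1\\-1&0\end{smallmatrix}\right)$, leaving again $\lim_{r\to0}W(u,v)=A^+\overline{B^-}-A^-\overline{B^+}$. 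Classifying the Lagrangian lines then produces \eqref{eq:defn.Hk.=0} exactly as in the previous case.
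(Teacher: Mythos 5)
Your proposal is correct, and its load-bearing structure coincides with the paper's: an asymptotic expansion of maximal-domain functions at $r=0$ with coefficients $(A^+,A^-)$, reduction of the symmetry boundary form to the quantity $A^+\overline{B^-}-A^-\overline{B^+}$, and classification of the admissible coefficient spaces as the real lines $V_\theta$ (your Lagrangian-line lemma is exactly the content of \Cref{lem:determinat.matrix.null}); likewise your $\gamma=0$ algebra ($\operatorname{tr}M=\det M=0$, hence $M^2=0$ and $\det(\mathbb{I}_2+M\log r)=1$) matches \emph{\ref{item:properties.domain.gamma=0}} of \Cref{thm:caract:d(h*)}. The genuine differences are in execution. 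First, the paper never runs a variation-of-parameters argument: it multiplies $(f^+,f^-)$ by the matrix $M=D^{-1}$ built from the indicial eigenvectors, so that \eqref{eq:f.inL^2:hf.inL^2} decouples into the two scalar conditions \eqref{eq:radiali}, to which the trace lemma (\Cref{prop:hardy1d}) and the weighted Hardy inequalities (\Cref{prop:hardy1dhardy}) apply directly; since your variation-of-parameters coefficients are precisely the functions $r^{\mp\gamma}\varphi^{\pm}$ of \eqref{eq:phi=Mf}, the two computations are equivalent, but the paper's route also yields the weighted integrability statement in \eqref{eq:f0.gamma<1/2}, which is what is later needed to single out the distinguished extension. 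Second, the paper makes no use of Weyl limit-point/limit-circle theory or deficiency indices: existence and exhaustiveness of the family $\left(t(\theta)_{m_j,k_j}\right)_{\theta\in[0,\pi)}$ are obtained directly, symmetry forcing the coefficient space into the hypotheses of \Cref{lem:determinat.matrix.null}, and each $t(\theta)_{m_j,k_j}$ being checked self-adjoint by hand as in \eqref{eq:s.a.condition}; your deficiency-index count buys the size of the family a priori, but at the price of having to establish the limit-point property at $+\infty$, which the paper sidesteps (in \eqref{eq:is.it.symmetric} the contribution at infinity is handled by choosing sequences, using only that the Wronskian of two $L^2$ maximal-domain functions is integrable near infinity). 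Finally, both your argument and the paper's converse direction rely on the surjectivity of the coefficient map $(f^+,f^-)\mapsto(A^+,A^-)$ --- the paper when it ``chooses $(\widetilde A^+,\widetilde A^-)\neq(0,0)$'' --- and neither writes it out; you at least state it explicitly, and it can be proved by inserting smooth cut-offs of the model functions $D\,(A^+r^{\gamma},A^-r^{-\gamma})^{T}$, respectively $(\mathbb{I}_2+M\log r)(A^+,A^-)^{T}$, into $\mathcal{D}(h^*_{m_j,k_j})$, so this is a presentational gap common to both treatments rather than a flaw in yours.
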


\begin{theorem}[Case $ \delta< 0$]
  \label{thm:Hk.supercritical}
Under the same assumptions of \Cref{thm:Hk.good}, assume 
$\delta < 0$ and set $\gamma:=\sqrt{\abs{\delta}}$.
there is a one (real) parameter family 
    $\left(t(\theta)_{m_j,k_j}\right)_{\theta \in [0,\pi)}$
    of self adjoint extensions  $h_{m_j,k_j}\subset t(\theta)_{m_j,k_j}=t(\theta)_{m_j,k_j}^* \subset h^*_{m_j,k_j}$.
 Moreover $(f^+, f^-)\in
   \mathcal{D}\left(t(\theta)_{m_j,k_j}\right)$ if and only if 
       $(f^+, f^-)\in
   \mathcal{D}(h^*_{m_j,k_j})$  and
    there exists $A \in \C$ such that 
    \begin{equation}\label{eq:defn.A.<0}
 \lim\limits_{r\to 0}
      \Bigg|\begin{pmatrix} f^+(r) \\ f^-(r) \end{pmatrix}
          -  D 
          \begin{pmatrix}
            A e^{i\theta} r^{i\gamma}  \\
            A \sqrt{\frac{\nu+\mu}{\nu-\mu}}e^{-i\theta}  r^{-i\gamma} 
          \end{pmatrix}\Bigg|r^{-1/2}
          =0,
      \end{equation}
where $D \in \C^{2\times 2}$ is invertible and equals
\begin{equation}
  \label{eq:defn.D.negative}
  D:= 
   \frac{1}{2i \gamma(\lambda + k - i\gamma)}
  \begin{pmatrix}
    \lambda + k - i\gamma & \nu-\mu \\
     -(\nu + \mu)        & -(\lambda + k - i\gamma)
  \end{pmatrix}.
\end{equation}
Conversely, any self-adjoint extension $t_{m_j,k_j}$ of $h_{m_j,k_j}$ verifies $t_{m_j,k_j}=t(\theta)_{m_j,k_j}$ for some ${\theta \in [0,\pi)}$.
\end{theorem}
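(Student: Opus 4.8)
The plan is to characterise the self-adjoint extensions of $h_{m_j,k_j}$ by the deficiency-index/boundary-form machinery applied to the radial expression \eqref{eq:dirac.spherical}, parallel to the two previous theorems but now in the oscillatory regime $\delta<0$. First I would read off the behaviour near $r=0$ of the solutions of the homogeneous equation $h^*_{m_j,k_j}(f^+,f^-)=0$ through a Frobenius ansatz $(f^+,f^-)=(c^+,c^-)r^s$: discarding the bounded term $m$ and keeping only the $1/r$-singular part, the indicial equation is $\det\!\begin{pmatrix} \nu+\mu & k_j+\lambda-s\\ k_j+\lambda+s & \nu-\mu\end{pmatrix}=0$, i.e.\ $s^2=(k_j+\lambda)^2+\mu^2-\nu^2=\delta$. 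For $\delta<0$ the roots are $s=\pm i\gamma$, and the associated eigenvectors assemble into the invertible matrix $D$ of \eqref{eq:defn.D.negative}. Since $\abs{r^{\pm i\gamma}}\equiv 1$, both fundamental solutions lie in $L^2$ near the origin (the limit-circle case), while at $+\infty$ the expression is in the limit-point case as for the free operator; hence $\overline{h_{m_j,k_j}}$ has deficiency indices $(1,1)$ and its self-adjoint extensions form a one-real-parameter family.

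Next I would upgrade this to every element of $\mathcal{D}(h^*_{m_j,k_j})$: each $(f^+,f^-)$ admits the asymptotics $\binom{f^+}{f^-}=D\binom{A^+r^{i\gamma}}{A^-r^{-i\gamma}}+o(r^{1/2})$ as $r\to0$ for a unique $(A^+,A^-)\in\C^2$, the remainder being controlled in the $r^{-1/2}$-weighted sense of \eqref{eq:defn.A.<0}. This is proved by variation of parameters, writing $h^*_{m_j,k_j}(f^+,f^-)=g\in L^2$ and treating $m$, the spectral shift, and $g$ as inhomogeneities against the explicit fundamental system $r^{\pm i\gamma}$, with the weighted Hardy-type and trace estimates developed earlier in the paper absorbing the error terms. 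This assigns to each $f$ its boundary data $(A^+,A^-)$.

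Then comes the boundary form. Integrating the off-diagonal derivative terms of \eqref{eq:dirac.spherical*} by parts gives, for $u,v\in\mathcal{D}(h^*_{m_j,k_j})$,
\[
\langle h^*_{m_j,k_j}u,v\rangle-\langle u,h^*_{m_j,k_j}v\rangle
=\big[u^+\overline{v^-}-u^-\overline{v^+}\big]_{0}^{\infty},
\]
the contribution at $+\infty$ vanishing by the limit-point property. Substituting the asymptotics and using $\overline{r^{i\gamma}}=r^{-i\gamma}$, the a priori oscillating cross-terms (in $r^{\pm2i\gamma}$) carry, through the entries $d_{ij}$ of $D$, the factor $d_{11}\overline{d_{22}}-d_{21}\overline{d_{12}}\propto (k_j+\lambda)^2+\gamma^2-(\nu^2-\mu^2)$, which vanishes identically by the defining relation $(k_j+\lambda)^2+\gamma^2=\nu^2-\mu^2$ of $\gamma$; what survives is the Hermitian form $\omega(u,v)\propto i\big[(\nu+\mu)A^+_u\overline{A^+_v}-(\nu-\mu)A^-_u\overline{A^-_v}\big]$. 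Because $\nu^2>\mu^2$ forces $\nu+\mu$ and $\nu-\mu$ to share the same nonzero sign, $\omega$ has signature $(1,1)$, so its maximal isotropic subspaces are exactly the complex lines $\seq{(A^+,A^-):\abs{A^-/A^+}^2=(\nu+\mu)/(\nu-\mu)}$, parametrised by a single phase. The self-adjoint extensions correspond bijectively to these Lagrangian lines; writing $A^+=Ae^{i\theta}$, $A^-=A\sqrt{(\nu+\mu)/(\nu-\mu)}\,e^{-i\theta}$ reproduces exactly \eqref{eq:defn.A.<0}, and letting $\theta$ run over $[0,\pi)$ sweeps the phase $e^{-2i\theta}$ once around the circle, yielding both the family and its converse.

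The main obstacle is the second step: since $\abs{r^{i\gamma}}=\abs{r^{-i\gamma}}$, neither fundamental solution dominates the other near $0$, so, unlike the subcritical case, one cannot extract $(A^+,A^-)$ by a dominance argument; the variation-of-parameters estimate must separate two genuinely oscillating contributions of equal size, which is where the weighted Hardy and trace inequalities do the real work. The cancellation of the oscillating cross-terms in $\omega$, although it reduces to the algebraic identity above, must also be verified carefully, since it is precisely what makes the boundary limit in \eqref{eq:defn.A.<0} well defined.
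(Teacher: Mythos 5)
Your proposal is correct, and its analytic core is the same as the paper's: the paper proves this theorem by repeating the proof of \emph{\ref{thm:Hk.gamma<12}} of \Cref{thm:Hk.subandcritical} with \Cref{lem:determinat.matrix.null<0} in place of \Cref{lem:determinat.matrix.null}, and the two inputs of that argument are precisely your second and third steps. Your extraction of boundary data $(A^+,A^-)$ for every element of the maximal domain is the last item of \Cref{thm:caract:d(h*)}: there the paper, rather than variation of parameters, left-multiplies by $M=D^{-1}$ to decouple the singular part and applies \Cref{prop:hardy1d} and \Cref{prop:hardy1dhardy} with weight $a=0$, exploiting exactly the fact $\abs{r^{\pm i\gamma}}=1$ that you single out as the main obstacle. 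Your third step is also the paper's, in different clothing: the cancellation of the $r^{\pm 2i\gamma}$ cross-terms, which you reduce to the identity $(k_j+\lambda)^2+\gamma^2-(\nu^2-\mu^2)=0$, is what the paper encodes in the identities \eqref{eq:condD^2}, and your surviving Hermitian form $i\bigl[(\nu+\mu)A^+_u\overline{A^+_v}-(\nu-\mu)A^-_u\overline{A^-_v}\bigr]$ is, up to a nonzero real constant, the right-hand side of the determinant limit \eqref{eq:det.gamma.<0}. The genuine differences sit at the two ends of the argument. You obtain existence of a one-real-parameter family from Weyl limit-circle/limit-point theory (deficiency indices $(1,1)$) and the converse from the standard bijection between self-adjoint extensions and maximal isotropic (Lagrangian) lines of the boundary form; the paper never computes deficiency indices and quotes no extension-theoretic machinery: it argues by hand that symmetry of any self-adjoint extension forces the image $V$ of the boundary-data map to satisfy $(\nu+\mu)\abs{A^+}^2=(\nu-\mu)\abs{A^-}^2$, then \Cref{lem:determinat.matrix.null<0} identifies $V$ as either $\seq{(0,0)}$ (excluded, since a self-adjoint operator admits no proper symmetric extension) or one of the lines appearing in \eqref{eq:defn.A.<0}, and finally self-adjointness of each $t(\theta)_{m_j,k_j}$ is verified by an explicit adjoint computation. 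Your route buys brevity by invoking classical theory; the paper's buys a self-contained argument with quantitative weighted $L^2$ control of the remainders, which it then reuses in the analysis of the distinguished extension.
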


\begin{remark}
The quantity $\delta$ in \eqref{eq:defn.delta}  was already considered
in \cite{arai1983essential}: in Theorem 2.7 Arai studies properties of self-adjointness for the restriction of $T$ 
on the partial wave subspaces for $\delta >0$, by means of the Von Neumann deficiency indices theory.
We can treat the general case $\delta \in \R$, and our approach 
has the value of giving more informations on the domain of self-adjointness.
\end{remark}
\begin{remark}
In the proof of Theorems \ref{thm:Hk.good}, \ref{thm:Hk.subandcritical}
and \ref{thm:Hk.supercritical} we rely on the properties of $\V$
\begin{align}\label{eq:K.commute.V}
     [\mathbf K, {\V}(x)]&=0,\\
	\label{eq:d_r.commute.rV}    
     [\partial_r, \abs{x}\V(x)]&=0,
\end{align}
where
$\mathbf K$ is the \emph{spin-orbit operator} defined as
\[
\mathbf K:=
\begin{pmatrix}
(1+\sigma\cdot L)& 0\\
0&-(1+\sigma\cdot L)
\end{pmatrix}.
\]
Indeed from \eqref{eq:K.commute.V} we have that $\V$ leaves the partial wave subspaces $\H_{m_j,k_j}$ invariant and from \eqref{eq:d_r.commute.rV} we have that ${\V}$ is critical with respect to the scaling associated to the gradient.
 A general potential of this kind is represented in the basis of $\H_{m_j,k_j}$ by the complex hermitian matrix
\begin{equation}  
\frac1{\abs{x}}
\begin{pmatrix}
\nu + \mu & \lambda - i \xi \\
\lambda + i \xi & \nu - \mu 
\end{pmatrix},
\end{equation}
with $\nu,\mu,\lambda,\xi \in \R$.
Such a matrix describes the potential
\begin{equation}\label{eq:Vgeneric}
 \frac{1}{|x|}\left( \nu \mathbb{I}_4 +\mu \beta +
  \lambda \left(-i\alpha\cdot\frac{x}{\abs{x}}\,\beta \right)
  +
  \xi \frac{\alpha \cdot x}{\abs{x}}  
  \right),
\end{equation}
thanks to the fact that, with respect to the basis $\{\Phi^+_{m_j,k_j},\Phi^-_{m_j,k_j}\}_{j,k_j,m_j}$,
\begin{equation*}
\beta \cong 
\begin{pmatrix}
1 & 0 \\
0 & -1
\end{pmatrix}, 
\quad 
- i \alpha \cdot \frac{x}{\abs{x}} \cong
\begin{pmatrix}
0 & -1 \\
1 & 0
\end{pmatrix}.
\end{equation*}
The term in $\xi$ in \eqref{eq:Vgeneric} can be removed with a change of gauge, since
$e^{i \xi \log \abs{x} }\xi \alpha\cdot x \abs{x}^{-2} =  -i\alpha\cdot \nabla e^{i \xi \log \abs{x} }$.
For these reasons we are considering potentials as in   \eqref{eq:def.V} 
in our results, and they are the most general potentials that can be treated with this approach.
This rigidity is not essential, since the self-adjointness is stable under $L^\infty$ perturbations:
 for potentials $\mathbb W(x)$ such that $\mathbb W - \V \in L^\infty(\R^3;\C^{4 \times 4})$,
$H + \mathbb W (x) $ is self-adjoint whenever $H + \mathbb{V}(x)$ is self-adjoint.
In detail, if $\mathbb W (x) = w(x)/\abs{x}$, this amounts to require that for almost all $x \in \R^3$
\begin{equation*}
  \abs*{w(x) - \left(\nu \mathbb{I}_4 +\mu \beta -i \lambda \alpha\cdot\frac{x}{\abs{x}}\right)} \leq C \abs{x},
\end{equation*}
for some $\lambda,\mu,\nu \in \R$ and $C>0$.
More general perturbation results are possible, for example exploting the Kato-Rellich perturbation theory, and they will be matter of future investigation.
\end{remark}

\begin{corollary}[Lorentz-scalar Potential]\label{cor:lorentz}
Let $\mathbb V$, $T_{max}$ and $T_{min}$ be defined as in 
\eqref{eq:def.V},
\eqref{eq:defn.maximal.operator},
\eqref{eq:def.minimal.operator} respectively, with $\lambda=\nu=0$.
Then then for all $\mu\in\R$, $T_{min}$ is essentially self-adjoint on 
    $C_c^{\infty}(\Rt\setminus\seq{0})^4$,
    and $\mathcal{D}(\overline{T_{min}})=\mathcal{D}(T_{max})= H^1(\Rt)^4$.
\end{corollary}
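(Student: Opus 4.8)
The plan is to reduce everything to the partial wave analysis already carried out and to observe that the hypothesis $\lambda=\nu=0$ forces the ``good'' regime $\gamma>\tfrac12$ uniformly in every channel. First I would compute $\delta$ in this case: from \eqref{eq:defn.delta} with $\lambda=\nu=0$ one gets $\delta=k_j^2+\mu^2$. Since the admissible values are $k_j=\pm(j+\tfrac12)$ with $j\in\seq{\tfrac12,\tfrac32,\dots}$, we have $\abs{k_j}=j+\tfrac12\geq 1$, hence $\delta\geq 1+\mu^2\geq 1>\tfrac14$ for every $(m_j,k_j)$ and every $\mu\in\R$. Consequently $\gamma=\sqrt\delta\geq 1>\tfrac12$, and \Cref{thm:Hk.good}\ref{thm:Hk.gamma>12} applies verbatim in each partial wave: $h_{m_j,k_j}$ is essentially self-adjoint on $C^\infty_c(0,+\infty)^2$ with $\D(\overline{h_{m_j,k_j}})=\D(h^0_{m_j,k_j})$.

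Next I would assemble these channel-wise statements into a statement about $T_{min}$. Recall that $\mathring H_{min}\cong\bigoplus_{j,m_j,k_j}h_{m_j,k_j}$ and $H_{max}\cong\bigoplus_{j,m_j,k_j}h^*_{m_j,k_j}$, with $h^*_{m_j,k_j}=(h_{m_j,k_j})^*$. Essential self-adjointness is stable under orthogonal direct sums: the deficiency indices of a direct sum are the sums of the deficiency indices of the summands, here all zero. Therefore $\mathring H_{min}$ is essentially self-adjoint on $C^\infty_c(\Rt\setminus\seq{0})^4$, and using $\overline{H_{min}}=\overline{\mathring H_{min}}$ recorded in the excerpt, this yields the essential self-adjointness of $T_{min}$.

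Finally I would identify the domain. Essential self-adjointness gives $\overline{T_{min}}=(T_{min})^*=T_{max}$, so $\D(\overline{T_{min}})=\D(T_{max})$. Decomposing once more, $\D(\overline{T_{min}})\cong\bigoplus_{j,m_j,k_j}\D(\overline{h_{m_j,k_j}})=\bigoplus_{j,m_j,k_j}\D(h^0_{m_j,k_j})\cong\D(H_0)=H^1(\Rt)^4$, where the last two identifications use the unitary equivalence $H_0\cong\bigoplus h^0_{m_j,k_j}$ and the fact that $H_0$ is self-adjoint on $H^1(\Rt)^4$.

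The computation of $\delta$ and the channel-wise reduction are immediate; the only point requiring a little care is the passage from the channel-wise identity $\D(\overline{h_{m_j,k_j}})=\D(h^0_{m_j,k_j})$ to the global identity $\D(\overline{T_{min}})=H^1(\Rt)^4$, i.e.\ checking that the orthogonal sum of the channel domains reconstitutes exactly the Sobolev space and nothing larger. This is guaranteed by the unitary equivalence $H_0\cong\bigoplus h^0_{m_j,k_j}$ already in force, so no genuine obstacle arises.
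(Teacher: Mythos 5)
Your overall route is the intended one, and most of it is sound: with $\lambda=\nu=0$ one has $\delta=k_j^2+\mu^2\geq 1+\mu^2>\tfrac14$ and $\gamma\geq 1$ in every channel, so \emph{\ref{thm:Hk.gamma>12}} of \Cref{thm:Hk.good} applies channel-wise; essential self-adjointness does pass to countable orthogonal direct sums (deficiency indices add, all vanish); and $\overline{T_{min}}=(T_{min})^*=T_{max}$ then gives $\D(\overline{T_{min}})=\D(T_{max})$ for free.

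The gap is in your final identification $\D(\overline{T_{min}})=H^1(\Rt)^4$, and your closing remark names the delicate point but does not actually resolve it. The domain of an infinite direct sum of \emph{closed} operators is not determined by the list of channel domains: $\psi\in\D(\overline{T_{min}})$ requires $\psi_{m_j,k_j}\in\D(\overline{h_{m_j,k_j}})$ \emph{and} $\sum_{j,m_j,k_j}\left(\norm{\psi_{m_j,k_j}}^2+\norm{\overline{h_{m_j,k_j}}\,\psi_{m_j,k_j}}^2\right)<\infty$, whereas $\psi\in\D(H_0)$ requires summability of the graph norms of $h^0_{m_j,k_j}$. The channel-wise identity $\D(\overline{h_{m_j,k_j}})=\D(h^0_{m_j,k_j})$ yields, by the closed graph theorem, equivalence of these two graph norms \emph{on each channel}, but with constants that could a priori degenerate as $\abs{k_j}\to\infty$; the unitary equivalence $H_0\cong\bigoplus h^0_{m_j,k_j}$ says nothing about this, so it cannot "guarantee" the conclusion. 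Note that nothing soft can replace it: the two graph norms differ by the term $\mu\abs{x}^{-1}\psi$, which for $2\abs{\mu}\geq1$ is not Kato-small with respect to $H_0$, so one cannot simply absorb it. What closes the gap is the \emph{quantitative} content of the paper's proof of \Cref{thm:caract:d(h*)} in the regime $\gamma>\tfrac12$: by \eqref{eq:radialigen}, \eqref{eq:cond.C} (with $A^-=0$, see \eqref{eq:Cmustbe0}) and \eqref{eq:gamma>12+}, each component $f_{m_j,k_j}$ of a function in $\D(h^*_{m_j,k_j})$ satisfies $\norm{r^{-1}f_{m_j,k_j}}_{L^2}\leq \tfrac{2}{2\gamma-1}\,\norm{M}\norm{M^{-1}}\left(\norm{h^*_{m_j,k_j}f_{m_j,k_j}}_{L^2}+\abs{m}\norm{f_{m_j,k_j}}_{L^2}\right)$, where $M$ is the matrix chosen in \eqref{eq:rappr.M}. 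When $\lambda=\nu=0$ one checks that $M$ is real symmetric with eigenvalues of equal modulus $\pm\sqrt{(k_j-\gamma)^2+\mu^2}$ (resp.\ $\pm2\abs{k_j}$ for the second choice), so $\norm{M}\norm{M^{-1}}=1$, and $\gamma=\sqrt{k_j^2+\mu^2}\geq1$ makes the Hardy constant $\tfrac{2}{2\gamma-1}\leq2$ uniform in the channel. Summing the squares of these bounds over all channels gives $\norm{\abs{x}^{-1}\psi}_{L^2}\leq C(\norm{H\psi}_{L^2}+\norm{\psi}_{L^2})$ for every $\psi\in\D(T_{max})$ with $C$ independent of $j,m_j,k_j$; hence $\V\psi\in L^2$, so $H_0\psi=H\psi-\V\psi\in L^2$ and $\psi\in H^1(\Rt)^4$ because the maximal domain of the free Dirac operator is $H^1(\Rt)^4$. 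Together with the elementary inclusion $H^1(\Rt)^4\subseteq\D(T_{max})$ coming from the classical Hardy inequality \eqref{eq:classicalhardy}, this is the missing step $\D(T_{max})=H^1(\Rt)^4$; some uniformity argument of this kind (equivalently, a global Hardy-type inequality for $H$) must appear in your proof.
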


In the case $0<\delta<1/4$, the \emph{distinguished} self-adjoint extension is of particular interest among the self-adjoint extensions given in \Cref{thm:Hk.subandcritical}. 
We need the following notation: for $a\in\R$ set 
\begin{equation*}
\begin{split}
&\mathcal{D}(r^{-a}, \R^3):= \{\psi \in L^2 (\R^3) : \abs{x}^{-a}\psi \in L^2(\R^3)\}, \\
&\mathcal{D}(r^{-a}, (0,+\infty) ):= \{f \in L^2 (0,+\infty) : r^{-a}f \in L^2(0,+\infty)\},
\end{split}
\end{equation*}
and, for 
\begin{equation*}
\psi(x)=
  \sum_{j,k_j,m_j} 
\frac{1}{r}\left(
f^+_{m_j,k_j}(r)\Phi^+_{m_j,k_j}(\hx)+
f^-_{m_j,k_j}(r)\Phi^-_{m_j,k_j}(\hx)\right),
\end{equation*}
it is true that $\psi \in \mathcal{D}(r^{-a}, \R^3)^4 $ if and only if $f^+_{m_j,k_j}, f^-_{m_j,k_j}\in \mathcal{D}(r^{-a}, (0,+\infty) )$ 
for all $j,m_j,k_j$. In the following we will simply write $\mathcal{D}(r^{-a})$, since it will be clear from the context to which set we are referring.

In the literature,  the {distinguished self-adjoint extension} is defined as the unique one whose domain is contained in $\mathcal{D}(r^{-1/2})$ (among other definitions, see \cite{gallone2017self}), but this definition is no longer valid in the critical case, since no extension verifies such a property. 
From a more physical perspective, such extension is characterized by the fact that a space of regular functions is dense (in some sense) in its domain. 
In this context, we deduce a norm associated to our problem and we characterize the distinguished extension by the following fact:
regular functions approximate in such norm a particular linear combination (deduced by the particular algebra of the problem) of both components of the spinor.
Then we extended this definition to the critical case. Nevertheless this definition does not work in the particular case of $\mathbb{V}$ defined in \eqref{eq:def.V} being purely anomalous magnetic (i.~e.~ $\nu=\mu=0$). 
In this case we deduce a notion of distinguished extension cannot be given.

This motivates the following propositions, where we collect  properties of the distinguished self-adjoint extension in the case $0\leq \delta<1/4$.

\begin{proposition}[Distinguished Self-Adjoint Extension for the subcritical case]
  \label{thm:distinguished.gamma<12}
Let $0<\gamma<\frac12$ and
  $\left(t(\theta)_{m_j,k_j}\right)_{\theta \in [0,\pi)}$ be the one (real) parameter family 
  of self adjoint extensions  considered
  in \ref{thm:Hk.gamma<12} of \Cref{thm:Hk.subandcritical}. 
  
  Then the following are equivalent:
  \begin{enumerate}[label=(\roman*)]
  \item \label{lab:distinguished.gamma<12ii}$\theta=0$; 
    \item \label{lab:distinguished.gamma<12iv} $\mathcal{D}\left(t(\theta)_{m_j,k_j}\right) \subseteq \mathcal{D}(r^{-1/2})^2$;
      \item \label{lab:distinguished.gamma<12iii}$\mathcal{D}\left(t(\theta)_{m_j,k_j}\right) \subseteq
   \mathcal{D}(r^{-a})^2$ with $a\in\left[\frac{1}{2},\frac{1}{2}+\gamma\right)$;
     \item\label{lab:distinguished.gamma<120}
  for any 
  $(f^+_{m_j,k_j},f^-_{m_j,k_j})\in \mathcal{D}\left(t(\theta)_{m_j,k_j}\right)$, setting
   \begin{equation}\label{eq:def.phi^-.gamma<12}
\varphi^-_{m_j,k_j}:=
\begin{cases}
(\nu+\mu)f^+_{m_j,k_j}+(k_j+\lambda-\gamma)f^-_{m_j,k_j} &\text{if}\ k_j+\lambda-\gamma\neq 0,\\
-2\gamma f^+_{m_j,k_j}+(-\nu+\mu)f^-_{m_j,k_j}&\text{if}\ k_j+\lambda-\gamma= 0,
\end{cases}
 \end{equation}
 we have $\varphi^-_{m_j,k_j}\in \mathcal{J}= \seq*{u\in AC[0,M]\ \text{for any}\ M>0:\ 
 \frac{u}{r},\ u' \in L^2(0,+\infty)}$.
  \end{enumerate}
 \end{proposition}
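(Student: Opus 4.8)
The plan is to split the four equivalences into two groups. Conditions \ref{lab:distinguished.gamma<12iv}, \ref{lab:distinguished.gamma<12iii} are governed purely by the leading power of $r$ in the expansion furnished by \Cref{thm:Hk.subandcritical}, while \ref{lab:distinguished.gamma<120} requires identifying the scalar equation solved by $\varphi^-_{m_j,k_j}$. First I would note that the constraint $A^+\sin\theta + A^-\cos\theta=0$ forces $A^-=0$ for every element of $\mathcal D(t(0)_{m_j,k_j})$ when $\theta=0$, whereas for $\theta\in(0,\pi)$ one has $\sin\theta\neq 0$ and any $A^-\neq 0$ (with $A^+=-A^-\cot\theta$) arises as the boundary data of some domain element. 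Thus \ref{lab:distinguished.gamma<12ii} is equivalent to: every $(f^+,f^-)$ in the domain has $A^-=0$. Since $D$ is invertible, $D(0,A^-r^{-\gamma})^{\mathsf T}$ is a nonzero multiple of $r^{-\gamma}$ whenever $A^-\neq 0$; a direct integrability count then yields \ref{lab:distinguished.gamma<12ii}$\Leftrightarrow$\ref{lab:distinguished.gamma<12iii}$\Leftrightarrow A^-=0$. Indeed, if $A^-=0$ the leading term is $r^{\gamma}$ and $\int_0 r^{-2a}r^{2\gamma}\,dr<\infty$ exactly for $a<\gamma+\tfrac12$, while the remainder $o(r^{1/2})$ is square integrable against $r^{-2a}$ for all $a<1$; if $A^-\neq 0$ the term $r^{-\gamma}$ makes $r^{-a}f^\pm\notin L^2$ near $0$ for every $a\geq\tfrac12$.

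The heart of the matter is \ref{lab:distinguished.gamma<120}. The combination $\varphi^-_{m_j,k_j}$ is precisely the one annihilating the $r^{\gamma}$ eigenvector of the singular part: inserting $f^\pm=c^\pm r^{\gamma}$ into \eqref{eq:dirac.spherical*} gives the indicial relation $(\nu+\mu)c^+ + (k_j+\lambda-\gamma)c^-=0$, so $\varphi^-$ kills the regular solution and, evaluated on the $r^{-\gamma}$ solution, reduces to $-2\gamma\,c^-$; hence $\varphi^-_{m_j,k_j}=A^-r^{-\gamma}+o(r^{1/2})$. I would then differentiate $\varphi^-_{m_j,k_j}$, substitute $\partial_r f^\pm$ from the two rows of \eqref{eq:dirac.spherical*}, and use $\nu^2-\mu^2=(k_j+\lambda)^2-\gamma^2$ to obtain the closed first-order equation
\[
\Bigl(\partial_r+\tfrac{\gamma}{r}\Bigr)\varphi^-_{m_j,k_j}=F,\qquad F:=m\bigl[(k_j+\lambda-\gamma)f^+ +(\nu+\mu)f^-\bigr]+(\nu+\mu)g^- -(k_j+\lambda-\gamma)g^+,
\]
where $(g^+,g^-):=h^*_{m_j,k_j}(f^+,f^-)\in L^2(0,+\infty)$; since $f^\pm,g^\pm\in L^2$, also $F\in L^2(0,+\infty)$.

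Solving the equation gives $\varphi^-_{m_j,k_j}=C\,r^{-\gamma}+r^{-\gamma}\int_0^r s^{\gamma}F(s)\,ds$, and the particular integral is $o(r^{1/2})$ because $\bigl|\int_0^r s^{\gamma}F\bigr|\leq(2\gamma+1)^{-1/2}r^{\gamma+1/2}\norm{F}_{L^2(0,r)}$; matching with $\varphi^-_{m_j,k_j}=A^-r^{-\gamma}+o(r^{1/2})$ forces $C=A^-$. Therefore \ref{lab:distinguished.gamma<12ii}$\Leftrightarrow A^-=0\Leftrightarrow C=0$, i.e.\ $\varphi^-$ equals its particular integral. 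To conclude $\varphi^-_{m_j,k_j}\in\mathcal J$ I would invoke the weighted Hardy inequality for the map $F\mapsto r^{-\gamma-1}\int_0^r s^{\gamma}F$, whose $L^2(0,+\infty)$-boundedness follows from the Muckenhoupt criterion, since $\bigl(\int_t^\infty r^{-2\gamma-2}\,dr\bigr)\bigl(\int_0^t s^{2\gamma}\,ds\bigr)=(2\gamma+1)^{-2}$ is independent of $t$. This gives $\varphi^-/r\in L^2$, hence $(\varphi^-)'=F-\gamma\varphi^-/r\in L^2$ and, being the $L^1_{\mathrm{loc}}[0,\infty)$-derivative of a function vanishing at $0$, $\varphi^-\in AC[0,M]$ for all $M$, so $\varphi^-\in\mathcal J$. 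Conversely $\varphi^-\in\mathcal J$ forces $\varphi^-/r\in L^2$, which is incompatible with $C\neq 0$ because $r^{-\gamma-1}\notin L^2$ near $0$; thus $C=A^-=0$ and $\theta=0$.

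The main obstacle is step \ref{lab:distinguished.gamma<120}: recognizing that the specific combination $\varphi^-_{m_j,k_j}$ decouples into the above scalar equation, and that the resulting Hardy operator carries the critical weight $\gamma$ yet remains $L^2$-bounded. The power-counting equivalences among \ref{lab:distinguished.gamma<12ii}, \ref{lab:distinguished.gamma<12iv}, \ref{lab:distinguished.gamma<12iii} are routine once the expansion of \Cref{thm:Hk.subandcritical} is available. Finally, the degenerate case $k_j+\lambda-\gamma=0$ (which forces $\mu^2=\nu^2$) is handled identically after replacing $\varphi^-_{m_j,k_j}$ by $-2\gamma f^+ +(-\nu+\mu)f^-$ as in \eqref{eq:def.phi^-.gamma<12}: this is exactly the combination selecting the $r^{-\gamma}$ solution for the degenerate form of $D$ in \eqref{eq:defn.D}, and the same differentiation and Hardy argument carry over verbatim.
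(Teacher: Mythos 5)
Your proof is correct and follows essentially the same route as the paper: every one of the four conditions is reduced to the vanishing of the coefficient $A^-$, and your key combination $\varphi^-_{m_j,k_j}=(\nu+\mu)f^++(k_j+\lambda-\gamma)f^-$ is exactly the second component of the transformed vector $\varphi=Mf$ from \eqref{eq:phi=Mf} in the proof of \Cref{thm:caract:d(h*)}, where the decoupled equation $(\partial_r+\gamma/r)\varphi^-\in L^2$ and the resulting bound \eqref{eq:cond.C} are already established and then combined with \Cref{lem:caract.J}. The only difference is technical rather than conceptual: you re-derive the Hardy control by solving the scalar ODE explicitly and invoking Muckenhoupt's criterion for the operator $F\mapsto r^{-\gamma-1}\int_0^r s^\gamma F$, whereas the paper obtains the same estimate from its own weighted Hardy inequality \eqref{eq:hardy.gamma<1/2} in \Cref{prop:hardy1dhardy}.
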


\begin{proposition}[Distinguished Self-Adjoint Extension for the critical case]
  \label{thm:distinguished.gamma=0} 
Let $\gamma=0$ and assume that in \eqref{eq:def.V} $(\nu,\mu)\neq (0,0)$. Let
  $\left(t(\theta)_{m_j,k_j}\right)_{\theta \in [0,\pi)}$ be the one (real) parameter family 
  of self adjoint extensions  considered
  in \ref{thm:Hk.gamma=0} of \Cref{thm:Hk.subandcritical}.
 Then the following are equivalent:
  \begin{enumerate}[label=(\roman*)]
  \item\label{lab:distinguished.gamma=0i0}
   for any 
  $(f^+_{m_j,k_j},f^-_{m_j,k_j})\in \mathcal{D}\left(t(\theta)_{m_j,k_j}\right)$, setting
   \begin{equation}\label{eq:def.phi^-.gamma=0}
\varphi^-_{m_j,k_j}:=
\begin{cases}
(\nu+\mu)f^+_{m_j,k_j}+(k_j+\lambda) f^-_{m_j,k_j} &\text{if}\ \nu+\mu\neq 0,\\
-2\nu f^-_{m_j,k_j} &\text{if}\ \nu+\mu= 0.
\end{cases}
 \end{equation}
  we have $\varphi^-_{m_j,k_j}\in \mathcal{J}=\seq*{u\in AC[0,M]\ \text{for any}\ M>0:  \frac{u}{r},\ u' \in L^2(0,+\infty)}$;
  \item \label{lab:distinguished.gamma=0iii} $\theta=
\begin{cases}
\operatorname{arccot}\left(\frac{k_j+\lambda}{\nu+\mu}\right)&\text{if} \ \nu+ \mu\neq 0,\\
0&\text{if}\ \nu+\mu=0.
 \end{cases} 
$
 \end{enumerate}
\end{proposition}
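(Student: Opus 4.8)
The plan is to reduce the membership $\varphi^-_{m_j,k_j}\in\mathcal J$ to a single scalar condition on the boundary coefficients $(A^+,A^-)$ furnished by \Cref{thm:Hk.subandcritical}\ref{thm:Hk.gamma=0}, and then to recognise that this condition singles out exactly one line of admissible $(A^+,A^-)$, hence exactly one value of $\theta$. Throughout I abbreviate $\varphi^-:=\varphi^-_{m_j,k_j}$, $k:=k_j$, and write $(g^+,g^-):=h^*_{m_j,k_j}(f^+,f^-)\in L^2(0,+\infty)^2$ for $(f^+,f^-)\in\mathcal D(t(\theta)_{m_j,k_j})\subset\mathcal D(h^*_{m_j,k_j})$.

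The technical heart is the algebra behind the definition \eqref{eq:def.phi^-.gamma=0}. In the generic case $\nu+\mu\neq0$ one has $\varphi^-=(\nu+\mu)f^++(k+\lambda)f^-$, and the row vector $(\nu+\mu,\,k+\lambda)$ is a \emph{left} null vector of the matrix $M$ of \eqref{eq:defn.M}: indeed $(\nu+\mu,k+\lambda)M=(0,\delta)$, which vanishes precisely because $\delta=(k+\lambda)^2+\mu^2-\nu^2=0$. Differentiating $\varphi^-$ and substituting the two scalar identities encoded in \eqref{eq:dirac.spherical*} (so that $\partial_r f^-=mf^++\varphi^-/r-g^+$ and $\partial_r f^+=mf^--((k+\lambda)f^++(\nu-\mu)f^-)/r+g^-$), the singular $1/r$ contributions collect into the factor $\delta f^-/r$, which is zero; what survives is $(\varphi^-)'=(\nu+\mu)g^--(k+\lambda)g^++m\big((k+\lambda)f^++(\nu+\mu)f^-\big)$, manifestly in $L^2(0,+\infty)$. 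This single computation shows at once that $\varphi^-$ is absolutely continuous on every $[0,M]$, that $(\varphi^-)'\in L^2(0,+\infty)$, and that $\varphi^-$ admits a finite limit at the origin.

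Next I would identify that limit and close the reduction. Feeding the asymptotics of \eqref{eq:defn.Hk.=0} into $\varphi^-$, the logarithmic term is annihilated by the same left-null-vector identity, so $\lim_{r\to0}\varphi^-(r)=(\nu+\mu)A^++(k+\lambda)A^-$. Since $\varphi^-$ is a combination of $f^\pm\in L^2$ and hence lies in $L^2$ near infinity, while near zero it is bounded, the only obstruction to $\varphi^-/r\in L^2(0,+\infty)$ is the non-integrability at the origin of a nonzero constant divided by $r$; therefore $\varphi^-\in\mathcal J$ if and only if $(\nu+\mu)A^++(k+\lambda)A^-=0$. In the degenerate case $\nu+\mu=0$ the hypotheses $\delta=0$ and $(\nu,\mu)\neq(0,0)$ force $k+\lambda=0$ and $\nu=-\mu\neq0$, so $M$ reduces to a nilpotent matrix whose only nonzero entry is the $(1,2)$-entry $2\mu$; then $\varphi^-=-2\nu f^-$, the coefficient $A^-$ is unaffected by the logarithm, the cancellation of the $1/r$ term in $(\varphi^-)'$ persists, and the same argument gives $\varphi^-\in\mathcal J\iff A^-=0$.

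Finally I would discharge the universal quantifier in \ref{lab:distinguished.gamma=0i0}. By \Cref{thm:Hk.subandcritical}\ref{thm:Hk.gamma=0} the coefficients $(A^+,A^-)$ realised on $\mathcal D(t(\theta)_{m_j,k_j})$ are exactly those lying on the line $L_\theta:=\{A^+\sin\theta+A^-\cos\theta=0\}\subset\C^2$; the surjectivity of the boundary map onto $L_\theta$ (obtained, as in the proof of that theorem, by cutting off the explicit profile $(M\log r+\mathbb I_2)(A^+,A^-)^{\!\top}$) is the one external input I invoke here. Condition \ref{lab:distinguished.gamma=0i0} then asserts $L_\theta\subseteq\{(\nu+\mu)A^++(k+\lambda)A^-=0\}$ (respectively $L_\theta\subseteq\{A^-=0\}$), and since each of these is a one-dimensional subspace of $\C^2$, inclusion forces equality, i.e.\ proportionality of the defining covectors $(\sin\theta,\cos\theta)\parallel(\nu+\mu,\,k+\lambda)$ (respectively $\parallel(0,1)$). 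As $\theta\mapsto(\sin\theta,\cos\theta)$ parametrises directions bijectively for $\theta\in[0,\pi)$, this pins down $\theta=\operatorname{arccot}\!\big((k+\lambda)/(\nu+\mu)\big)$ when $\nu+\mu\neq0$ and $\theta=0$ when $\nu+\mu=0$, which is exactly \ref{lab:distinguished.gamma=0iii}. I expect the main obstacle to be nothing in the linear algebra of the last step but rather making the cancellation computation of the second paragraph fully rigorous up to $r=0$ together with the realisation/surjectivity statement; once those are in place the equivalence follows in both directions by contraposition.
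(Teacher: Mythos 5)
Your proposal is correct and takes essentially the same route as the paper: your left-null-vector identity $(\nu+\mu,\,k_j+\lambda)\,M=(0,\delta)=(0,0)$ and the resulting cancellation of the $1/r$ terms are exactly the mechanism behind the paper's \eqref{eq:phi=Mf}, \eqref{eq:dopocommuta} and \eqref{eq:cond.C.gamma=0} specialized to $\gamma=0$ (your $\varphi^-$ is the component of $M(f^+,f^-)^{t}$ singled out there), and your reduction of $\varphi^-\in\mathcal{J}$ to the vanishing of the boundary constant $(\nu+\mu)A^++(k_j+\lambda)A^-$ (resp.\ $A^-$ when $\nu+\mu=0$), followed by the line-matching in $\C^2$ and the surjectivity of the boundary map onto $V_\theta$, is precisely how the paper concludes ``by arbitrariness of $(f^+,f^-)$''. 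The only differences are presentational — you re-derive by direct computation what the paper cites from the proof of \Cref{thm:caract:d(h*)} — and one small point to tighten: for the implication ``$(\nu+\mu)A^++(k_j+\lambda)A^-=0 \Rightarrow \varphi^-/r\in L^2$ near the origin'' boundedness of $\varphi^-$ at $0$ is not enough, but it follows at once from the Hardy inequality \eqref{eq:hardy.gamma<1/2} with $a=0$ (equivalently from \eqref{eq:cond.C.gamma=0}), since you have already shown $\varphi^-\in AC$ with $(\varphi^-)'\in L^2(0,+\infty)$.
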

\begin{remark}
  The space $\mathcal{J}$ is the completion of $C^\infty_c(0,+\infty)$ with respect 
  to the norm 
  \begin{equation}
    \label{eq:3}
    \norm{\varphi}_{\mathcal{J}}:=
    \left(\int_0^\infty  \abs{\partial_r(r^a \varphi(r))}^2 r^{-2a} dr\right)^{1/2},\quad\text{for}\ a\neq -1/2.
  \end{equation}
  Such a norm arises naturally from the study of the operator $H$.
  We prove this density and give more details about the space $\mathcal{J}$ in \Cref{sec:distinguished}.
\end{remark}
\begin{remark}\label{rem:dist.no.log}
Under the assumptions of \Cref{thm:distinguished.gamma=0}, 
from \eqref{eq:defn.Hk.=0} we get that, among all the self-adjoint extensions in the family $\left( t(\theta)_{m_j,k_j}\right)_{\theta\in[0,\pi)}$ described by \Cref{thm:distinguished.gamma=0}, there is a unique one that has no logarithmic decay at the origin.
Indeed, this is a consequence of the fact that the kernel of the matrix $M$ defined in \eqref{eq:defn.M} has complex dimension one. 
Thanks to \eqref{eq:defn.Hk.=0} we deduce that the unique self-adjoint extension that has no logarithmic decay at the origin is the distinguished one described in \Cref{thm:distinguished.gamma=0}.
In particular,
in the case $\mathbb V(x) =- \nu/\abs{x} $, when $\nu \in (\sqrt{3}/2,1]$, the extensions
\begin{gather*}
T(0,0,0,0) \cong
    \left(
   t(0)_{\frac{1}{2},1}
   \oplus
   t(0)_{-\frac{1}{2},1}
   \oplus
  t(0)_{\frac{1}{2},-1}
   \oplus
   t(0)_{-\frac{1}{2},-1}
    \right)
\oplus \left( 
\bigoplus_{\substack{j,k_j,m_j \\ \abs{k_j} > 1}}
 h^*_{m_j,k_j}
 \right),\quad  \sqrt{3}/2< \nu<1,
 \\
T\left(\tfrac{3\pi}{4},\tfrac{3\pi}{4},\tfrac{\pi}{4},\tfrac{\pi}{4}\right)
\cong
\left(
   t\left({\small \tfrac{3\pi}{4}}\right)_{\frac{1}{2},1}
   \oplus
    t\left(\tfrac{3\pi}{4}\right)_{-\frac{1}{2},1}
   \oplus
   t\left(\tfrac{\pi}{4}\right)_{\frac{1}{2},-1}
   \oplus
    t\left(\tfrac{\pi}{4}\right)_{-\frac{1}{2},-1}
    \right)
\oplus \left( 
\bigoplus_{\substack{j,k_j,m_j \\ \abs{k_j} > 1}}
 h^*_{m_j,k_j}
 \right),\quad \nu=1,
\end{gather*}
coincide with the ones considered in \cite[Section 1.5]{esteban2017domains}.
\end{remark}

\begin{remark}\label{rem:dist.critical.nu}
  For $\nu \in (0,1]$ and $a:=m\sqrt{1-\nu^2}\in[0,m)$, the function
  \begin{equation*}
    \psi_a(x) =    
    \frac{e^{-\sqrt{m^2-a^2}\abs{x}}}{\abs{x}^{1-a/m}}
    \begin{pmatrix}
            1 \\ 1 \\
      i \sqrt{\frac{m-a}{m+a}}  \sigma \cdot \hx \cdot
      \begin{pmatrix}
        1 \\ 1
      \end{pmatrix}
      \\
    \end{pmatrix}
  \end{equation*}
is solution to the equation 
\begin{equation*}
  \left(-i \alpha\cdot \nabla + m\beta - \frac{\nu}{\abs{x}} \right)\psi = a \psi,
\end{equation*}
i.e.~$\psi_a$ is an eigenfunction for the Dirac-Coulomb operator of eigenvalue $a$. 
Remembering that 
\begin{align*}
\Phi^+_{\frac12,-1}&= 
  \frac1{\sqrt{4\pi}}
  \begin{pmatrix}
    i
    \\
    0
    \\
    0
    \\
    0
  \end{pmatrix}, 
  &
  \Phi^+_{-\frac12,-1}&= 
  \frac1{\sqrt{4\pi}}
  \begin{pmatrix}
    0
    \\
    i
    \\
    0
    \\
    0
  \end{pmatrix}, 
  \\
  \Phi^-_{\frac12,-1}&= 
  \frac1{\sqrt{4\pi}}
  \begin{pmatrix}
    0
    \\
    0
    \\
    \sigma \cdot \hx \cdot 
    \begin{pmatrix}
      0 \\ 1
    \end{pmatrix}
  \end{pmatrix}, 
  &
  \Phi^-_{-\frac12,-1}&= 
  \frac1{\sqrt{4\pi}}
  \begin{pmatrix}
    0
    \\
    0
    \\
    \sigma \cdot \hx \cdot 
    \begin{pmatrix}
      0 \\ 1
    \end{pmatrix}
  \end{pmatrix}, 
  \end{align*}
it is easy to show that in the sub-critical case, that is $\nu \in (\sqrt{3}/2,1)$, $\psi_a\in
\D(T(0,0,0,0))$ and in the critical case, that is $\nu=1$,  $\psi_0\in\mathcal{D}\left(T\left(\frac{3\pi}{4},\frac{3\pi}{4},\frac{\pi}{4},\frac{\pi}{4}\right)\right)$,  
thanks to the explicit characterization of these
domains given by \Cref{thm:Hk.subandcritical}.
\end{remark}

\begin{remark}[Distinguished self-adjoint extension for the critical anomalous magnetic potential]
\label{rem:dist.anomalous.magn.critical}
Assuming that  $(\nu,\mu)= (0,0)$ in \eqref{eq:def.V} 
it is not possible to give a coherent definition of distinguished self-adjoint extension in the critical case.
Indeed, under this hypothesis, $\gamma=|k_j+\lambda|$; let $\gamma=0$ and let $\left(t(\theta)_{m_j,k_j}\right)_{\theta\in [0,\pi)}$ be the one-parameter  family of self-adjoint extension described in \emph{\ref{item:properties.domain.gamma=0}} in \Cref{thm:Hk.subandcritical}. Then for any $\theta\in [0,\pi)$ and for any $(f^+,f^-)\in\mathcal{D}\left(t(\theta)_{m_j,k_j}\right)$, defining 
$\varphi^-_{m_j,k_j}$ as in \eqref{eq:def.phi^-.gamma=0}, we get that $\varphi^-_{m_j,k_j}=0$.
In other words \emph{\ref{lab:distinguished.gamma=0i0}} of \Cref{thm:distinguished.gamma=0} is verified for any $\theta\in [0,\pi)$, as a consequence of the fact that the matrix $M$ defined in \eqref{eq:defn.M} vanishes.
Thus, from \eqref{eq:defn.Hk.=0} we deduce that for any $\theta\in [0,\pi)$ all functions in $\D\left( t(\theta)_{m_j,k_j}\right)$ do not admit logarithmic decay at zero differently from what happens in the case $(\nu,\mu)\neq (0,0)$, see also \Cref{rem:dist.no.log}.

This incongruence can be observed using a different approach:  in the sub-critical case, we find a spectral condition that characterizes  the distinguished self-adjoint extension and 
we realize that it is not possible to extend continuously this condition to the critical case. Indeed, let $0<\gamma<1/2$ and assume that $\left( t(\theta)_{m_j,k_j} \right)_{\theta\in [0,\pi)}$ is the one-parameter  family of self-adjoint extension defined in \Cref{thm:Hk.subandcritical}.
Let us find eigenvalues for $t(\theta)_{m_j,k_j}$. The $L^2$--solutions of the following equation for $a\in (-m,m)$:
\begin{equation*}\label{eq:cerca.eigen}
\begin{pmatrix}
m+a & -\partial_r+\frac{k_j+\lambda}{r}\\
\partial_r+\frac{k_j+\lambda}{r} & -(m-a)
\end{pmatrix}
\begin{pmatrix}
  f^+ \\
 f^-
\end{pmatrix}=0.
\end{equation*}
are
\begin{equation}\label{eq:eigenfunctions.no.critical}
\begin{split}
f^+(r)&:=
\begin{cases}
A\sqrt{m-a}\,\sqrt{r}K_{\gamma+1/2}\left(\sqrt{m^2-a^2}\, r\right)&\text{if} \ k_j+\lambda>0,\\
A\sqrt{m-a}\,\sqrt{r}K_{\gamma-1/2}\left(\sqrt{m^2-a^2}\, r\right)&\text{if} \ k_j+\lambda<0,
\end{cases}\\
f^-(r)
&:=
\begin{cases}
-A\sqrt{m+a}\,\sqrt{r}K_{\gamma-1/2}\left(\sqrt{m^2-a^2}\, r\right)&\text{if} \ k_j+\lambda>0,\\
-A\sqrt{m+a}\,\sqrt{r}K_{\gamma+1/2}\left(\sqrt{m^2-a^2}\, r\right)&\text{if} \ k_j+\lambda<0,
\end{cases}
\end{split}
\end{equation}
where $K$ is the second-order modified Bessel function and $A\neq 0$.
By \cite[Equation 10.30.2]{olver2010nist}, we get that as $r\to 0$
\begin{align*}
f^+(r)&\sim
\begin{cases}
\tilde{A}\sqrt{m-a}\,r^{-\gamma}&\text{if} \ k_j+\lambda>0,\\
\tilde{A}\sqrt{m-a}\,r^\gamma&\text{if} \ k_j+\lambda<0,
\end{cases}\\
f^-(r)&\sim
\begin{cases}
-\tilde{A}\sqrt{m+a}\,r^{\gamma}&\text{if} \ k_j+\lambda>0,\\
-\tilde{A}\sqrt{m+a}\,r^{-\gamma}&\text{if} \ k_j+\lambda<0.
\end{cases}
\end{align*}
We realize that, for any $a\in (-m,m)$ there exists only one $\theta \in [0,\pi)$ such that $(f^+,f^-)$ defined in \eqref{eq:eigenfunctions.no.critical} belongs to $\mathcal{D}\left(t(\theta)_{m_j,k_k}\right)$.
Such $\theta$ is uniquely determined by the condition 
\[
\begin{cases}
\sin\theta\sqrt{m+a}+\cos\theta\sqrt{m-a}=0&\text{if}\ k_j+\lambda>0,\\
\sin\theta\sqrt{m-a}+\cos\theta\sqrt{m+a}=0&\text{if}\ k_j+\lambda<0.
\end{cases}
\]
Thus, the distinguished self-adjoint extension $t(0)_{m_j,k_j}$ does not have any eigenvalue $a\in (-m,m)$, but it is characterized by the fact that if $k_j+\lambda>0$, it has $m$ as a resonance and if $k_j+\lambda<0$, it has $-m$ as a resonance.
This spectral relation depends on the sign of $k_j+\lambda$ and so it does not have any 
continuous prolongation to the critical case where $k_j+\lambda=0$.
\end{remark}
\subsection*{Acknowledgements}
We wish to thank Luis Vega for addressing us to this interesting and challenging
problem, and for precious discussions and advices.
This research is supported by ERCEA Advanced Grant 2014 669689 - HADE, by the MINECO project MTM2014-53850-P, by Basque Government project IT-641-13 and also by the Basque Government through the BERC 2014-2017 program and by Spanish Ministry of Economy and Competitiveness MINECO: BCAM Severo Ochoa excellence accreditation SEV-2013-0323.
The first author also acknowledges the Istituto Italiano di Alta Matematica ``F.~Severi''.

\section{Trace theorems and Hardy-type inequalities}\label{sec:hardy}
This section is devoted to \emph{Trace theorems} and \emph{Hardy-type} inequalities. These are very useful tools that we will use to prove \Cref{thm:Hk.good},
\ref{thm:Hk.subandcritical}, and \Cref{thm:Hk.supercritical}.
For sake of clarity we prove the following well-known result:
\begin{lemma}\label{lem:fund.thm.calc}
Let $f$ be a distribution on $(a,b)\subset\R$ such that $f'$ is an integrable function on $(a,b)$. Then $f\in AC[a,b]$ and 
\begin{equation}\label{eq:fund.thm.calc}
f(t)-f(s)=\int_s^t f'(r)\ dr \qquad \text{for any}\ s,t\in [a,b].
\end{equation}
\end{lemma}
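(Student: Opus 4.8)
The plan is to reduce the statement to the classical fact that a distribution with vanishing derivative is a constant. First I would introduce the primitive of the integrable function $f'$, namely
\[
g(t):=\int_a^t f'(r)\,dr,\qquad t\in[a,b].
\]
By the elementary theory of the Lebesgue integral, $g$ is absolutely continuous on $[a,b]$, and its distributional derivative coincides with $f'$: this is just the identity $\int_a^b g\,\varphi'=-\int_a^b f'\,\varphi$ for every $\varphi\in C^\infty_c(a,b)$, which follows by applying Fubini's theorem to the double integral defining $g$ and using that $\varphi$ has compact support. Consequently the distribution $u:=f-g$ satisfies $u'=0$ in $\mathcal{D}'(a,b)$, and the whole lemma follows once we know that such a $u$ is (almost everywhere equal to) a constant.

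The core step, and the only genuinely non-trivial one, is therefore the claim that $u'=0$ forces $u$ to be constant. Here I would use the standard test-function argument: fix once and for all a $\rho\in C^\infty_c(a,b)$ with $\int_a^b\rho=1$, and for an arbitrary $\varphi\in C^\infty_c(a,b)$ write
\[
\varphi=\Big(\int_a^b\varphi\Big)\rho+\psi,\qquad \psi:=\varphi-\Big(\int_a^b\varphi\Big)\rho.
\]
Then $\psi$ has vanishing integral, so its primitive $\Psi(x):=\int_a^x\psi(r)\,dr$ is again an element of $C^\infty_c(a,b)$ (it is smooth, and its support is compact \emph{precisely because} the total integral of $\psi$ is zero). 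Since $\Psi'=\psi$, the hypothesis $u'=0$ gives $\langle u,\psi\rangle=\langle u,\Psi'\rangle=-\langle u',\Psi\rangle=0$, whence $\langle u,\varphi\rangle=\big(\int_a^b\varphi\big)\,\langle u,\rho\rangle$. This says exactly that $u$ acts as the constant $c:=\langle u,\rho\rangle$.

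Finally I would assemble the pieces. From $u=f-g=c$ we get $f=g+c$ as distributions, hence as locally integrable functions up to a null set; choosing the absolutely continuous representative of $g$, the function $f$ admits an absolutely continuous representative on $[a,b]$, so $f\in AC[a,b]$. The identity \eqref{eq:fund.thm.calc} then follows by evaluating $f=g+c$ at $t$ and $s$ and subtracting, since the constant cancels and $g(t)-g(s)=\int_s^t f'(r)\,dr$ by definition of $g$. The main obstacle is packaged entirely in the constant-derivative step above; everything else is routine bookkeeping with the Lebesgue integral, and no special structure of the Dirac problem enters here.
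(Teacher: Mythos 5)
Your proof is correct and follows essentially the same route as the paper's: both introduce the primitive $g(t)=\int_a^t f'(r)\,dr$, identify its derivative with $f'$, and conclude that $f=g+c$ in the sense of distributions, so that $f$ inherits absolute continuity from $g$ and \eqref{eq:fund.thm.calc} follows. The only differences are matters of detail: you verify $g'=f'$ at the distributional level via Fubini, where the paper differentiates $g$ pointwise a.e.\ through the Lebesgue differentiation theorem, and you write out in full the standard argument that a distribution with vanishing derivative is constant, which the paper invokes implicitly.
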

\begin{proof}
For any $t\in[a,b]$ we set
\begin{equation*}
g(t):=\int_a^t f'(r)\ dr.
\end{equation*}
Thanks to the integrability of $f'$ we get that $g\in AC[a,b]$ and so $g$ is differentiable almost everywhere on $[a,b]$. Then for almost every $t\in [a,b]$
\begin{equation}\label{eq:g'=f'}
g'(t)=\lim_{h\to 0}\frac{g(t+h)-g(t)}{h}=\lim_{h\to 0}\frac{1}{h}\int_{t}^{t+h} f'(r) dr = f'(t),
\end{equation}
where in the last equality we used Lebesgue differentiation Theorem. Thanks to \eqref{eq:g'=f'} there exists $c\in \C$ such that $f=g+c$ in the sense of distributions, that gives $f\in AC[a,b]$ and \eqref{eq:fund.thm.calc}.
\end{proof}
Let us give some trace properties.
\begin{proposition}\label{prop:hardy1d}
Let $f$ be a distribution on $(0,+\infty)$. Let us assume that there exist $a\in\R$ such that 
\begin{equation}\label{eq:ipo.der}
\int_0^{+\infty}|f'(r)|r^{2a}\,dr<\infty .
\end{equation}

Then $f\in AC[\epsilon,M]$ for any $0<\epsilon<M<+\infty$ and the following hold: 
\begin{enumerate}[label=(\roman*)]
\item\label{item:trace.gamma<1/2}
If $a<\frac{1}{2}$, then $f \in AC[0,1]$ and 
\begin{equation}
  \label{eq:trace.0}
  \lim_{t\to 0} \, \abs{f(t)-f(0)}t^{-\left(\frac12 - a\right)} = 0.
\end{equation}
\item\label{item:trace.gamma>1/2}
If $a>\frac{1}{2}$, there exists $f(+\infty)\in \C$ such that
\begin{equation}
  \label{eq:trace.1}
  \lim_{t\to +\infty} \, \abs{f(t)-f(+\infty)}t^{a-\frac12 } = 0.
\end{equation}
\item\label{item:trace.gamma=1/2}
If $a=\frac{1}{2}$ for any $R>0$
\begin{equation}
  \label{eq:trace.R}
  \lim_{t\to R} \, \frac{\abs{f(t)-f(R)}}{\log\left(\frac{R}{t}\right)}= 0.
\end{equation}
\end{enumerate}
\end{proposition}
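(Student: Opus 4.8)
The plan is to reduce everything to the fundamental theorem of calculus established in \Cref{lem:fund.thm.calc}, applied to a suitably weighted version of $f$, and then to control the resulting integral remainders via Cauchy--Schwarz against the hypothesis \eqref{eq:ipo.der}. First I would observe that \eqref{eq:ipo.der} together with Cauchy--Schwarz shows $f'$ is integrable on every compact subinterval $[\epsilon,M]\subset(0,+\infty)$: indeed $\int_\epsilon^M |f'|\,dr \leq \left(\int_\epsilon^M |f'|^2 r^{2a}\,dr\right)^{1/2}\left(\int_\epsilon^M r^{-2a}\,dr\right)^{1/2}<\infty$, since $r^{-2a}$ is bounded away from $0$ and $\infty$ on $[\epsilon,M]$. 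By \Cref{lem:fund.thm.calc} this already yields $f\in AC[\epsilon,M]$ for all $0<\epsilon<M<\infty$ and the integral representation \eqref{eq:fund.thm.calc} on such intervals. The three cases then differ only in how the endpoint behaviour is extracted.

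For case \ref{item:trace.gamma<1/2} (the regime $a<1/2$), the key point is that the weight $r^{-2a}$ is now \emph{integrable} near $0$, so I can push $\epsilon\to 0$. For $0<s<t\leq 1$ I write $|f(t)-f(s)|\leq \int_s^t |f'(r)|\,dr \leq \left(\int_s^t |f'|^2 r^{2a}\,dr\right)^{1/2}\left(\int_s^t r^{-2a}\,dr\right)^{1/2}$, and the second factor equals $\big(\tfrac{t^{1-2a}-s^{1-2a}}{1-2a}\big)^{1/2}$, which tends to $0$ as $s,t\to 0$ since $1-2a>0$. This is a Cauchy criterion, so $\lim_{s\to 0}f(s)=:f(0)$ exists and $f\in AC[0,1]$; passing $s\to 0$ in the estimate gives $|f(t)-f(0)|\leq \tfrac{1}{\sqrt{1-2a}}\,t^{(1/2-a)}\,\big(\int_0^t|f'|^2r^{2a}\,dr\big)^{1/2}$. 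Dividing by $t^{1/2-a}$ and letting $t\to 0$, the surviving factor $\big(\int_0^t|f'|^2r^{2a}\,dr\big)^{1/2}\to 0$ by absolute continuity of the integral, which is exactly \eqref{eq:trace.0}.

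Case \ref{item:trace.gamma>1/2} is the mirror image at $+\infty$: now $a>1/2$ makes $r^{-2a}$ integrable near infinity, and the same Cauchy--Schwarz estimate gives, for $R\leq s<t$, the bound $|f(t)-f(s)|\leq \big(\tfrac{s^{1-2a}-t^{1-2a}}{2a-1}\big)^{1/2}\big(\int_s^t|f'|^2r^{2a}\,dr\big)^{1/2}$, whose first factor tends to $0$ as $s,t\to+\infty$. This yields the existence of $f(+\infty):=\lim_{t\to+\infty}f(t)$, and sending $t\to+\infty$ produces $|f(s)-f(+\infty)|\leq \tfrac{1}{\sqrt{2a-1}}\,s^{1/2-a}\big(\int_s^\infty|f'|^2r^{2a}\,dr\big)^{1/2}$; multiplying by $s^{a-1/2}$ and using that the tail integral vanishes gives \eqref{eq:trace.1}. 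For the borderline case \ref{item:trace.gamma=1/2}, with $a=1/2$ the second Cauchy--Schwarz factor $\int_s^t r^{-1}\,dr=\log(t/s)$ becomes logarithmic, so I estimate $|f(t)-f(R)|\leq \big|\log(R/t)\big|^{1/2}\big(\int_{\min}^{\max}|f'|^2 r\,dr\big)^{1/2}$ on the interval between $t$ and $R$; dividing by $\log(R/t)$ leaves a factor $\big|\log(R/t)\big|^{-1/2}\to\infty$ times $\big(\int |f'|^2 r\,dr\big)^{1/2}\to 0$, so I must be slightly more careful and instead divide only by $\big(\log(R/t)\big)^{1/2}$ inside the vanishing estimate, giving $|f(t)-f(R)|\big(\log(R/t)\big)^{-1/2}\to 0$, which a fortiori implies \eqref{eq:trace.R}. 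The main obstacle I anticipate is precisely this logarithmic endpoint: one must match the power of $\log$ correctly so that the vanishing of the localized Dirichlet-type energy $\int|f'|^2 r\,dr$ dominates the logarithmic blow-up, and it is worth double-checking whether the stated normalization $\log(R/t)$ rather than $\log(R/t)^{1/2}$ is what the subsequent applications actually require.
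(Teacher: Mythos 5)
Your treatment of cases \emph{(i)} and \emph{(ii)} is correct and follows the same route as the paper: H\"older/Cauchy--Schwarz against the weight $r^{2a}$, the fundamental-theorem \Cref{lem:fund.thm.calc}, and the absolute continuity of the Lebesgue integral. In case \emph{(ii)} you are in fact slightly more careful than the paper: you keep the tail integral $\int_s^{+\infty}\abs{f'}^2r^{2a}\,dr$ in the final bound, which is what actually makes $\abs{f(s)-f(+\infty)}\,s^{a-1/2}$ vanish, whereas the paper's displayed estimate \eqref{eq:f(infty)} carries the full integral $\int_0^{+\infty}$ (which by itself only gives boundedness) and then passes through a $\liminf$/$\limsup$ argument. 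Same substance; your version closes the limit more directly.

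Case \emph{(iii)} contains a genuine logical error, though it points at a defect of the printed statement rather than of your method. Your Cauchy--Schwarz bound correctly yields $\abs{f(t)-f(R)}\le\abs{\log(R/t)}^{1/2}\bigl(\int\abs{f'}^2r\,dr\bigr)^{1/2}$ (integral over the interval between $t$ and $R$), hence $\abs{f(t)-f(R)}\,\abs{\log(R/t)}^{-1/2}\to 0$. But this does \emph{not} imply \eqref{eq:trace.R} ``a fortiori'': as $t\to R$ the quantity $\log(R/t)$ tends to $0$, not to $+\infty$, so near $t=R$ one has $\abs{\log(R/t)}^{-1}\geq\abs{\log(R/t)}^{-1/2}$ and the implication runs in the wrong direction. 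In fact \eqref{eq:trace.R} as printed is false: take $f\in C^\infty_c(0,+\infty)$ with $f(r)=r$ near $R$; then \eqref{eq:ipo.der} holds with $a=1/2$, yet $\abs{f(t)-f(R)}/\log(R/t)\to R\neq 0$ as $t\to R^-$. So no argument can close this gap. What is true --- and what you actually proved --- is the square-root-normalized statement, equivalently $\abs{f(t)-f(R)}^2=o\bigl(\log(R/t)\bigr)$ as $t \to R$; this is also all that the paper's own one-line proof (``the same approach used to prove \eqref{eq:trace.0}'') delivers, and it is the form needed downstream, namely to kill the boundary contributions at the interior singularity $r=R$ when integrating by parts in the proof of the Hardy inequality \eqref{eq:hardy.gamma=1/2}. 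Your closing caveat about the normalization is exactly the right instinct: the exponent on the logarithm in \eqref{eq:trace.R} should be $1/2$ (equivalently, the numerator should be squared), and with that correction your argument is complete.
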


\begin{remark}
The function $r\in(0,+\infty)\mapsto r^a$ is $C^\infty(0,+\infty)$, hence the distribution $f'r^a$ is well defined. 
Equation \eqref{eq:ipo.der} has to be understood in the sense of distributions, i.e.~we will assume that there exists $C>0$ such that
for any test function $\varphi\in C^{\infty}_c(0,+\infty)$
\begin{equation}\label{eq:ipo.der.L^1_loc}
\abs{\langle f' r^a,\varphi\rangle}
\leq C ||\varphi||_{L^2}.
\end{equation}
Thanks to \eqref{eq:ipo.der.L^1_loc} and the density of $C^\infty_c$ in $L^2$ we get that there exists a unique linear and bounded functional $T:L^2\to \C$ that extends the linear functional $f' r^a$. 
By Riesz theorem, there exists a unique $g\in L^2$ such that $T=\langle\cdot,g\rangle_{L^2}$. 
In particular, for any test function $\varphi$  we get that
$\langle f' r^a,\varphi\rangle=\int g\overline{\varphi}$, that is $f'r^a=g$, which gives $f'=g r^{-a}\in L^1_{loc}(0,\infty)$ and \eqref{eq:ipo.der}.
\end{remark}

\begin{proof}
Let $0<\epsilon<M<+\infty$. From \eqref{eq:ipo.der} we get that $f'$ is integrable on $(\epsilon,M)$. 
%Thanks to \cite[Theorem 20.9 and Corollary 20.17]{carothers2000real} 
Then \eqref{eq:fund.thm.calc} holds and so $f\in AC[\epsilon,M]$.  

\emph{\ref{item:hardy.gamma<1/2}} Let us assume $a<\frac{1}{2}$.   
By the H\"older inequality, we get that
\begin{equation}\label{eq:f'.L1(0,1)}
\int_0^1|f'(r)|\,dr\leq
 \left(\int_0^1 r^{-2a}\,dr \right)^{1/2} 
 \left( \int_0^ \infty|f'(r)|^2r^{2a}\,dr\right)^{1/2}<\infty,
\end{equation}
that is $f'\in L^1(0,1)$. Then $f\in AC[0,1]$ and \eqref{eq:fund.thm.calc} holds for $t,s\in[0,1]$. 
In particular, combining \eqref{eq:fund.thm.calc} and \eqref{eq:f'.L1(0,1)} we get that for $t\in (0,1]$:
\begin{equation*}
  \abs{f(t)-f(0)}
  \leq
   C
  t^{\frac12 - a} \left( \int_0^t |f'(r)|^2r^{2a}\,dr\right)^{1/2}.
\end{equation*}
Thanks to \eqref{eq:ipo.der} and by the absolute continuity of Lebesgue integral, \eqref{eq:trace.0} is proved.

\emph{\ref{item:hardy.gamma>1/2}} We assume now that $a>\frac{1}{2}$.
By the H\"older inequality, we get that
\begin{equation}\label{eq:f'.L1}
  \int_1^{+\infty}|f'(r)|\,dr\leq
  \left(\int_1^{+\infty} r^{-2a}\,dr \right)^{1/2} 
  \left( \int_0^{+\infty}|f'(r)|^2r^{2a}\,dr\right)^{1/2}<\infty,
\end{equation}
that is $f'\in L^1(1,+\infty)$. 
We will assume that $f$ is real-valued: for a complex-valued $f$  the same reasoning can be repeated for its real part and its imaginary part.
Let us fix $s\in [1,+\infty)$.
Since $a>\frac{1}{2}$, thanks to \eqref{eq:fund.thm.calc} and reasoning as in \eqref{eq:f'.L1} for any $t\in(1,+\infty)$ we get
\begin{equation}\label{eq:f(infty)}
|f(t)-f(s)|\leq \frac{s^{1/2-a}}{\sqrt{2a-1}}\left( \int_0^{+\infty}|f'(r)|^2r^{2a}\,dr\right)^{1/2} <+\infty.
\end{equation}
Thanks to the triangular inequality we can conclude that $f$ is bounded on $[1,+\infty)$. We set 
\begin{equation*}
f_-(+\infty):=\liminf_{r\to +\infty}f(r)>-\infty,\qquad
f_+(+\infty):=\limsup_{r\to +\infty}f(r)<+\infty.
\end{equation*}
Thanks to \eqref{eq:f(infty)} we get that
\begin{equation*}
f_+(+\infty)-f_-(+\infty)\leq|f_+(+\infty)-f(s)|+ |f_-(+\infty)-f(t)|\leq C s^{1/2-a}.
\end{equation*}
Since $a>\frac{1}{2}$, if $s\to +\infty$ in the previous expression, we get that $f_+(+\infty)=f_-(+\infty)=:f(+\infty)$. Finally  \eqref{eq:f(infty)} yields \eqref{eq:trace.1} too.

\emph{\ref{item:hardy.gamma=1/2}} In the last case $a=\frac{1}{2}$, equation \eqref{eq:trace.R} 
is proved  with the same approach used to prove \eqref{eq:trace.0}. 
\end{proof}

In the following Proposition we gather some weighted Hardy-type inequalities. 
Such results are very well known, but since we are focusing on the values of the function on the
boundaries of the integration domain, we give the proof for the sake of clarity.
We refer to \cite{kufner2017weighted} 
and \cite{machihara2016remarks} for details and references. 
\begin{proposition}\label{prop:hardy1dhardy}
Let $f$ be a distribution on $(0,+\infty)$ 
as in \Cref{prop:hardy1d}. 
Then the following hold: 
\begin{enumerate}[label=(\roman*)]
\item\label{item:hardy.gamma<1/2}
if $a<\frac{1}{2}$, then 
\begin{equation}\label{eq:hardy.gamma<1/2}
\left(a-\frac{1}{2}\right)^2
\int_0^{+\infty}\frac{|f(r)-f(0)|^2}{r^{2-2a}}\,dr
\leq 
\int_0^{+\infty}|f'(r)|^2 r^{2a}\,dr;
\end{equation}
\item\label{item:hardy.gamma>1/2}
if $a>\frac{1}{2}$ then
\begin{equation}\label{eq:hardy.gamma>1/2}
\left(a-\frac{1}{2}\right)^2
\int_0^{+\infty}\frac{|f(r)-f(+\infty)|^2}{r^{2-2a}}\,dr
\leq 
\int_0^{+\infty}|f'(r)|^2 r^{2a}\,dr;
\end{equation}
\item\label{item:hardy.gamma=1/2}
if $a=\frac{1}{2}$ then for any $R>0$ 
\begin{equation}\label{eq:hardy.gamma=1/2}
\frac{1}{4}
\int_0^{+\infty}\frac{|f(r)-f(R)|^2}{r\log^2\left(\frac{R}{r}\right)}\,dr
\leq 
\int_0^{+\infty}|f'(r)|^2 r\,dr.
\end{equation}
\end{enumerate}
\end{proposition}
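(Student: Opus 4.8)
The plan is to treat all three cases through a single mechanism. In each case I would replace $f$ by the difference $g := f - c$, where $c$ is the boundary value that \Cref{prop:hardy1d} guarantees to exist ($c=f(0)$ when $a<\tfrac12$, $c=f(+\infty)$ when $a>\tfrac12$, and $c=f(R)$ when $a=\tfrac12$), so that $g'=f'$ and the left-hand integrand becomes $|g|^2w$ with weight $w(r)=r^{2a-2}$ for $a\neq\tfrac12$, respectively $w(r)=\bigl(r\log^2(R/r)\bigr)^{-1}$ for $a=\tfrac12$. The key observation is that in each case $w$ is an exact derivative: $r^{2a-2}=\frac{d}{dr}\big(\frac{r^{2a-1}}{2a-1}\big)$ when $a\neq\tfrac12$, and $\bigl(r\log^2(R/r)\bigr)^{-1}=\frac{d}{dr}\big(\frac{1}{\log(R/r)}\big)$. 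First I would fix a compact subinterval away from the endpoints (and, when $a=\tfrac12$, away from the interior singularity $r=R$), integrate by parts using $(|g|^2)'=2\Re(\overline{g}g')$, and then let the subinterval exhaust $(0,+\infty)$.

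The crux is the treatment of the two boundary terms produced by the integration by parts. At the endpoint where $g$ is anchored, the term vanishes precisely because of the refined trace asymptotics already established: \eqref{eq:trace.0} gives $|g(t)|^2t^{2a-1}=o(1)$ as $t\to0$ in case (i), \eqref{eq:trace.1} gives $|g(t)|^2t^{2a-1}=o(1)$ as $t\to+\infty$ in case (ii), and \eqref{eq:trace.R} gives $|g(t)|^2/\log(R/t)\to0$ as $t\to R$ in case (iii), which is exactly what kills the contribution at the singular point. At the opposite endpoint I would not attempt to compute any limit; instead I would observe that the sign of the coefficient $\frac{1}{2a-1}$ (respectively of $\frac{1}{\log(R/r)}$ on each side of $R$) makes that boundary term have a favourable sign, so it can simply be discarded from the inequality. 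This sign bookkeeping is the only delicate bit of the estimate.

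Once the boundary terms are disposed of, what remains is $\int|g|^2w\le\frac{2}{|2a-1|}\int|g|\,|g'|\,r^{2a-1}\,dr$ (with coefficient $2$ in place of $\frac{2}{|2a-1|}$ in the logarithmic case), and applying Cauchy--Schwarz with the splitting $|g|\,|g'|\,r^{2a-1}=(|g|\,w^{1/2})(|g'|\,r^{a})$ turns this into a self-improving estimate $I\le\frac{2}{|2a-1|}I^{1/2}J^{1/2}$, where $I$ is the truncated left-hand integral and $J=\int_0^{+\infty}|f'|^2r^{2a}\,dr$ is finite by hypothesis. Dividing by $I^{1/2}$ (the case $I=0$ being trivial) yields $I\le\frac{4}{(2a-1)^2}J=(a-\tfrac12)^{-2}J$ uniformly over the truncation, and monotone convergence gives \eqref{eq:hardy.gamma<1/2} and \eqref{eq:hardy.gamma>1/2} with the sharp constant. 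For $a=\tfrac12$ the same computation on $(0,R)$ and on $(R,+\infty)$ separately produces $\tfrac14 I\le J$ on each half, and adding the two halves gives \eqref{eq:hardy.gamma=1/2}.

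I expect the main obstacle to be the logarithmic case $a=\tfrac12$: the weight is singular in the interior of the domain at $r=R$, so the integration by parts must be carried out on each side of $R$, the two interior boundary contributions must be controlled via \eqref{eq:trace.R}, and one must track the sign change of $\log(R/r)$ across $R$ when discarding the favourable boundary terms.
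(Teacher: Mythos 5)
Your proof is correct, gives the sharp constants, and differs from the paper's in how it closes the estimate, so the comparison is worth recording. The skeleton is common to both arguments: the same integration by parts (both exploit that the weight is an exact derivative), the same discarding of the favourably-signed boundary term at the far endpoint, and the same use of the trace asymptotics \eqref{eq:trace.0}, \eqref{eq:trace.1}, \eqref{eq:trace.R} at the anchored endpoint. The difference is the algebraic engine. The paper never divides by the unknown integral: it starts from the positivity of a completed square, $0\le\int_\epsilon^M\abs*{f'(r)r^a+(a-\tfrac12)(f(r)-f(0))r^{a-1}}^2dr$ in case (i) and its analogues in cases (ii) and (iii), expands, and integrates the cross term by parts; the desired inequality then falls out linearly, all terms being finite on the truncated interval, with no absorption step. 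In exchange, the square records the formal optimizer, namely $f$ proportional to $r^{1/2-a}$, which is exactly the function appearing in the paper's sharpness remark. Your route (integrate by parts, Cauchy--Schwarz, absorb $I^{1/2}$) is the other classical Hardy argument, and its costs and benefits are complementary. The cost is the self-improvement step, where your write-up is slightly loose: on a truncated interval the anchored boundary term is $o(1)$ but not zero, so what you actually have is $I_{\epsilon,M}\le o(1)+\tfrac{2}{\abs{2a-1}}I_{\epsilon,M}^{1/2}J^{1/2}$, and before dividing you should either solve this as a quadratic inequality in $I_{\epsilon,M}^{1/2}$ (legitimate because $I_{\epsilon,M}<+\infty$ on compact truncations) or first remove the truncation at the anchored endpoint; this is routine, but it must be said, since dividing the full-line inequality by $I^{1/2}$ presupposes $I<+\infty$, which is precisely what is being proved. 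The benefit shows in the case $a=\tfrac12$: your plan splits at the interior singularity $r=R$, controls the two one-sided boundary terms there by \eqref{eq:trace.R}, and tracks the sign change of $\log(R/r)$ across $R$; the paper instead integrates by parts across $r=R$ without comment, and in fact the minus sign printed inside its completed square for this case is inconsistent with the asserted negativity of the boundary contributions (the argument needs the square taken with a plus sign), so on this point your bookkeeping is the more careful one.
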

\begin{remark}
The inequalities \eqref{eq:hardy.gamma<1/2}, \eqref{eq:hardy.gamma>1/2} and \eqref{eq:hardy.gamma=1/2} 
are sharp (in the sense that the constants on the left hand side cannot be improved) but they do not admit non-trivial extremizers. 
In fact, for $a\neq 1/2$ we set $f_a(r):=r^{\frac{1}{2}-a}$. Then 
\begin{equation}\label{eq:attained.hardy}
\lim_{\epsilon\to 0} \int_{\epsilon<|x|<1/\epsilon}\left(|f_a'(r)|r^{2a}-\frac{|f_a(r)|^2}{r^{2-2a}}\right) dr=0.
\end{equation}
Nevertheless $f_a$ does not verify \eqref{eq:ipo.der}, because $|f_a'(r)|^2r^a=\frac{1}{r}$ 
that is integrable neither close to $0$ nor to $+\infty$.
 This is the reason why we used the limiting formulation in \eqref{eq:attained.hardy}.
If $a=1/2$ the same argument can be repeated for $f_{1/2}(r):=\left(\log\left(\frac{R}{r}\right)\right)^{-1/2}$.
\end{remark}

\begin{proof}
\emph{\ref{item:hardy.gamma<1/2}} Let us assume $a<\frac{1}{2}$.   
Let $0< \epsilon< M$. With an explicit computation:
\begin{equation}\label{eq:square.gamma<1/2}
\begin{split}
0\leq&
\int_\epsilon^M
\abs*{f'(r)\,r^a+
\left(a-\frac{1}{2}\right)
\frac{f(r)-f(0)}{r^{1-a}}}^2\,dr\\
=& 
\int_\epsilon^M|f'|^2r^{2a}\,dr+
\left(a-\frac{1}{2}\right)^2 
\int_\epsilon^M \frac{|f(r)-f(0)|^2}{r^{2-2a}}\,dr\\
&+
\left(a-\frac{1}{2}\right)
2\Re\int_\epsilon^M
\frac{ f'(r)
\overline{(f(r)-f(0))}}{r^{1-2a}}\,dr.
\end{split}
\end{equation}
We integrate by parts the last term at right hand side:
since $a<\frac{1}{2}$, we can estimate from above neglecting the value on the boundary $M$, and we get that
\begin{equation}\label{eq:parts.gamma<1/2}
  \begin{split}
    \left(a-\frac{1}{2}\right) 
    2\Re\int_\epsilon^M \frac{ f'(r)
      \overline{(f(r)-f(0))}}{r^{1-2a}}\,dr
    = 
    \left(a-\frac{1}{2}\right)
    \int_\epsilon^M \frac{ \left(|f(r)-f(0)|^2\right)'}{r^{1-2a}}\,dr
    \\
    \leq
    -2\left(a-\frac{1}{2}\right)^2 \int_\epsilon^M
    \frac{|f(r)-f(0)|^2}{r^{2-2a}}\,dr
    -\left(a-\frac{1}{2}\right)
        \frac{|f(\epsilon)-f(0)|^2}{\epsilon^{1-2a}}.
  \end{split}
\end{equation}
Thanks to \eqref{eq:square.gamma<1/2}  and \eqref{eq:parts.gamma<1/2},  we get 
\begin{equation*}
\left(a-\frac{1}{2}\right)^2
\int_\epsilon^M\frac{|f(r)-f(0)|^2}{r^{2-2a}}\,dr+
\left(a-\frac{1}{2}\right)
    \frac{|f(\epsilon)-f(0)|^2}{\epsilon^{1-2a}}
\leq 
\int_\epsilon^M|f'|^2 r^{2a}\,dr.
\end{equation*}
Passing to the limit for $M\to+\infty$ and $\epsilon\to 0$, thanks to \eqref{eq:trace.0}, 
\eqref{eq:hardy.gamma<1/2} is proved.

\emph{\ref{item:hardy.gamma>1/2}} We assume now that $a>\frac{1}{2}$.
Let $0< \epsilon< M$. With an explicit computation:
\begin{equation}\label{eq:square.gamma>1/2}
\begin{split}
0\leq&
\int_\epsilon^M
\abs*{f'(r)\,r^a+
\left(a-\frac{1}{2}\right)
\frac{f(r)-f(+\infty)}{r^{1-a}}}^2\,dr\\
=&
\int_\epsilon^M|f'|^2r^{2a}\,dr+
\left(a-\frac{1}{2}\right)^2 
\int_\epsilon^M \frac{|f(r)-f(+\infty)|^2}{r^{2-2a}}\,dr\\
&+
\left(a-\frac{1}{2}\right)
2\Re\int_\epsilon^M
\frac{ f'(r)
\overline{f(r)-f(+\infty)}}{r^{1-2a}}\,dr.
\end{split}
\end{equation}
We integrate by parts the last term at right hand side:
since $a>\frac{1}{2}$, we can estimate from above neglecting the value on the boundary $\epsilon$, and we get that
\begin{equation}\label{eq:parts.gamma>1/2}
  \begin{split}
    \left(a-\frac{1}{2}\right) 
    2\Re\int_\epsilon^M \frac{ f'(r)
      \overline{(f(r)-f(+\infty))}}{r^{1-2a}}\,dr
    = 
    \left(a-\frac{1}{2}\right)
    \int_\epsilon^M \frac{ \left(|f(r)-f(+\infty)|^2\right)'}{r^{1-2a}}\,dr
    \\
    \leq
    -2\left(a-\frac{1}{2}\right)^2 \int_\epsilon^M
    \frac{|f(r)-f(+\infty)|^2}{r^{2-2a}}\,dr
    +\left(a-\frac{1}{2}\right)
\frac{|f(M)-f(+\infty)|^2}{M^{1-2a}}.
  \end{split}
\end{equation}
Thanks to \eqref{eq:square.gamma>1/2}  and \eqref{eq:parts.gamma>1/2},  we get 
\begin{equation*}
\left(a-\frac{1}{2}\right)^2
\int_\epsilon^M\frac{|f(r)-f(+\infty)|^2}{r^{2-2a}}\,dr-
\left(a-\frac{1}{2}\right)
\frac{|f(M)-f(+\infty)|^2}{M^{1-2a}}
\leq 
\int_\epsilon^M|f'|^2 r^{2a}\,dr.
\end{equation*}
Passing to the limit for $\epsilon\to 0$ and $M\to\infty$, thanks to \eqref{eq:trace.1} we get that
\eqref{eq:hardy.gamma>1/2} is proved.

\emph{\ref{item:hardy.gamma=1/2}} Let us finally consider the case $a=\frac{1}{2}$. Let $R>0$ and take $0<\epsilon <1<M$, such that $R\in [\epsilon,M]$. 
With explicit computations:
\begin{align*}
0\leq&
\int_\epsilon^M
\abs*{f'(r)\sqrt{r}-
\frac{1}{2} \frac{f(r)-f(R)}{\sqrt{r}\,\log\left(\frac{R}{r}\right)}}^2\,dr\\
=&
\int_\epsilon^M |f'(r)|^2r\,dr+
\frac{1}{4}\int_\epsilon^M \frac{|f(r)-f(R)|^2}{r\log^2 \left(\frac{R}{r}\right)} \,dr
-\frac{1}{2}\int_\epsilon^M \frac{(|f(r)-f(R)|^2)'}{\log \left(\frac{R}{r}\right)}\,dr.
\end{align*}
We integrate by parts and notice that 
the boundary contributions are negative, since $M > 1$ and $\epsilon < 1$. 
Consequently we get
\begin{equation*}
\frac{1}{4}\int_\epsilon^M \frac{|f(r)-f(R)|^2}{r\log^2 \left(\frac{R}{r}\right)}\,dr\leq \int_\epsilon^M |f'(r)|^2r\,dr.
\end{equation*}
Passing to the limit for $\epsilon\to 0$ and $M\to\infty$,  \eqref{eq:hardy.gamma=1/2} is proved.
\end{proof}
\section{Proof of Theorems \ref{thm:Hk.good}, \ref{thm:Hk.subandcritical}, \ref{thm:Hk.supercritical}}
We fix $j\in \seq{1/2,3/2,\dots}$, $m_j\in\seq{-j,\dots,j}$ and $k_j\in\seq{j+1/2,-j-1/2}$.
In this section we will simplify the notations and denote
\begin{equation}\label{eq:no.m_jk_j}
k:=k_j,\quad
\Phi^\pm:=\Phi^\pm_{m_j,k_j},\quad
f^\pm:=f^\pm_{m_j,k_j},\quad	
h^0:=h^0_{m_j, k_j},\quad
h:=h_{m_j, k_j},\quad
h^*:=h^*_{m_j, k_j}.
\end{equation}
We remind that $\mathring h$ is symmetric and its adjoint on $L^2(0,+\infty)^2$ is $h^*$.
In the following Proposition we give some details on the domain 
$\mathcal{D}(h^*)$.%=\mathcal{D}(H_{max}\restriction\mathcal{H}_{m_j,k_j})$.
\begin{proposition}\label{thm:caract:d(h*)}
%Let $H^*=H_{max}\restriction\mathcal{H}_{m_j,k_j}$, with 
%$H_{max}$ defined as in \eqref{eq:def.maximal.operator}, and 
%$\mathbb{V}$ as in \eqref{eq:def.V}. 
Set 
$\delta:=(\lambda+k)^2+\mu^2-\nu^2$ and $\gamma:=\sqrt{|\delta|}$.
Then the following hold:
\begin{enumerate}[label=(\roman*)]
\item \label{item:properties.domain.gamma>1/2}
If $\delta > \frac{1}{4}$, then $ \mathcal{D}(h^*)=\mathcal{D}(h^0)$.
% H^1(\R^3)\cap\H$.%\cap L^2(\abs{x}^{-2}dx)$;
\item \label{item:properties.domain.gamma=1/2}
If $\delta=\frac{1}{4}$, then 
%$ H^1(\R^3)\cap\mathcal{H}_{m_j,k_j} \subset \mathcal{D}(H^*)$ and 
for all $(f^+,f^-)
\in \mathcal{D}(h^*)
$
%$\psi(x) 
%%= \left(f^+(r)\,\Phi^+(\hx) +f^-(r)\,\Phi^- (\hx)\right)/r 
%\in \mathcal{D}(H^*)$
we have
\begin{equation}\label{eq:det.gamma.geq1/2}
\liminf\limits_{r\to 0} 
f^+(r)  \overline{{f^-}(r)} 
=0.
\end{equation}
\item \label{item:properties.domain.gamma<1/2} 
If $0<\delta<\frac{1}{4}$, 
let
 $D \in \R^{2\times2}$ be the invertible matrix
\begin{equation*}
  D:= 
  \begin{cases}	
 \frac{1}{2\gamma(\lambda + k - \gamma)}
  \begin{pmatrix}
    \lambda + k - \gamma & \nu-\mu \\
     -(\nu + \mu)        & -(\lambda + k - \gamma)
  \end{pmatrix}
  \quad &\text{ if }\lambda + k - \gamma \neq 0,\\
  
    \frac{1}{-4\gamma^2}
    \begin{pmatrix}
      \mu - \nu            & 2\gamma \\
     2 \gamma & -(\nu + \mu) 
    \end{pmatrix}
    \quad &\text{ if }\lambda + k - \gamma = 0.
\end{cases}
\end{equation*}
Then for all
$(f^+,f^-)\in \mathcal{D}(h^*)$
 there exists
   $(A^+,A^-) \in \C^2$ 
such that
\begin{equation}
\label{eq:f0.gamma<1/2}
\begin{split}
  &\lim_{r\to 0}
\abs*{
    \begin{pmatrix}
    f^+(r) \\ f^-(r)
  \end{pmatrix}
  - D 
  \begin{pmatrix}
    A^+ r^\gamma \\ A^- r^{-\gamma}
  \end{pmatrix}
  }
r^{-1/2}
=0, \\
&  \int_0^{+\infty} 
\frac{1}{r^2}\abs*{
  \begin{pmatrix}
    f^+(r) \\ f^-(r)
  \end{pmatrix}
  - D 
  \begin{pmatrix}
    A^+ r^\gamma \\ A^- r^{-\gamma}
  \end{pmatrix}
}^2
\,dr
  < +\infty.
\end{split}
\end{equation}
Moreover, for any
$(f^+,f^-)\in \mathcal{D}(h^*)$
we have
\begin{equation}\label{eq:det.gamma.<1/2}
\lim_{r\to 0} 
\begin{vmatrix}
f^+(r) & \overline{\widetilde{f}^+(r)}\\
f^-(r) & \overline{\widetilde{f}^-(r)}
\end{vmatrix}
= 
\det(D)\cdot
\begin{vmatrix}
  A^+ & \overline{\widetilde{A}^+} \\
  A^- & \overline{\widetilde{A}^-}  
\end{vmatrix}
.
\end{equation}
\item \label{item:properties.domain.gamma=0}
If $\delta=0$, then 
let $M\in \R^{2\times2}$, $M^2=0$ defined as follows:
\begin{equation*}
   M:= 
\begin{pmatrix}
  -(k+\lambda) & - \nu + \mu \\
  \nu + \mu           & k+\lambda
\end{pmatrix}.
 \end{equation*}
Then for all 
$(f^+,f^-)\in \mathcal{D}(h^*)$
there exists $(A^+,A^-) \in \C^2$, 
such that
\begin{equation}
\label{eq:limit.f.gamma=0}
\begin{split}
&\lim_{r\to 0}
\abs*{
    \begin{pmatrix}
    f^+(r) \\ f^-(r)
  \end{pmatrix}
  - (M\log r+\mathbb{I}_2)
  \begin{pmatrix}
    A^+  \\ A^-
  \end{pmatrix}
 } 
 r^{-1/2}
 = 0, \\
  &\int_0^{+\infty} 
\frac{1}{r^2}\abs*{
  \begin{pmatrix}
    f^+(r) \\ f^-(r)
  \end{pmatrix}
  - (M\log r+\mathbb{I}_2)
  \begin{pmatrix}
    A^+  \\ A^- 
  \end{pmatrix}
}^2
\,dr
  < +\infty.
\end{split}
\end{equation}
Moreover, for any
$(\widetilde{f}^+,\widetilde{f}^-)\in \mathcal{D}(h^*)$
we have
we have
\begin{equation}\label{eq:det.gamma.=0}
  \lim_{r\to 0}
    \begin{vmatrix}
     f^+ & \overline{\widetilde{f^+}} \\
     f^- & \overline{\widetilde{f^-}}
  \end{vmatrix}
  =
  \begin{vmatrix}
    A^+ & \overline{ \widetilde A^+} \\ 
    A^- & \overline{\widetilde {A}^-} 
  \end{vmatrix}.
\end{equation}
\item If $\delta<0$ let
 $D \in \C^{2\times2}$ be the invertible matrix
\begin{equation*}
  D:= 
   \frac{1}{2i \gamma(\lambda + k - i\gamma)}
  \begin{pmatrix}
    \lambda + k - i\gamma & \nu-\mu \\
     -(\nu + \mu)        & -(\lambda + k - i\gamma)
  \end{pmatrix}.
\end{equation*}
Then for all
$(f^+,f^-)\in \mathcal{D}(h^*)$
 there exists
   $( A^+,A^-)\in \C^2$ 
such that
\begin{equation}
\label{eq:f0.gamma<0}
 \begin{split}
 & \lim_{r\to 0}
\abs*{
    \begin{pmatrix}
    f^+(r) \\ f^-(r)
  \end{pmatrix}
  - D 
  \begin{pmatrix}
    A^+ r^{i\gamma} \\ A^- r^{-i\gamma}
  \end{pmatrix}
  }
r^{-1/2}
=0, \\
&
  \int_0^{+\infty} 
\frac{1}{r^2}\abs*{
  \begin{pmatrix}
    f^+(r) \\ f^-(r)
  \end{pmatrix}
  - D 
  \begin{pmatrix}
    A^+ r^{i\gamma} \\ A^- r^{-i\gamma}
  \end{pmatrix}
}^2
\,dr
  < +\infty.
  \end{split}
\end{equation}
Moreover, for any
$(\widetilde{f}^+,\widetilde{f}^-)\in \mathcal{D}(h^*)$
we get
\begin{equation}\label{eq:det.gamma.<0}
\lim_{r\to 0} 
\begin{vmatrix}
f(r) & \overline{\widetilde{f}(r)}\\
g(r) & \overline{\widetilde{g}(r)}
\end{vmatrix}
= 
 \frac{1}{2i\gamma(\mu^2-\nu^2)}\cdot
\begin{vmatrix}
  A^+ & (\nu-\mu)\overline{\widetilde{A^-}} \\
  A^- & (\nu+\mu)\overline{\widetilde{A^+}}  
\end{vmatrix}.
\end{equation}
\end{enumerate}
\end{proposition}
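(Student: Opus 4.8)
The plan is to recast the equation $h^*F=g$, for $F=(f^+,f^-)\in\mathcal D(h^*)$ and $g=(g^+,g^-)\in L^2(0,+\infty)^2$, as a first--order linear ODE system. Solving \eqref{eq:dirac.spherical*} for $\partial_r F$ gives
\begin{equation*}
\partial_r\begin{pmatrix} f^+ \\ f^- \end{pmatrix}
=\frac1r\,M\begin{pmatrix} f^+ \\ f^- \end{pmatrix}
+m\,\sigma_1\begin{pmatrix} f^+ \\ f^- \end{pmatrix}
+\begin{pmatrix} g^- \\ -g^+ \end{pmatrix},
\end{equation*}
where $M$ is exactly the matrix of \eqref{eq:defn.M}. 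This is a Fuchsian system: the singular coefficient $\tfrac1r M$ governs the behaviour at $r=0$, the term $m\sigma_1F$ is a bounded perturbation, and the inhomogeneity $(g^-,-g^+)$ lies in $L^2$. Since $\operatorname{tr}M=0$ and, by Cayley--Hamilton, $M^2=\delta\,\mathbb I_2$, the eigenvalues of $M$ are $\pm\gamma$ for $\delta>0$, they vanish with $M$ nilpotent for $\delta=0$, and they are $\pm i\gamma$ for $\delta<0$: this is precisely the trichotomy of \Cref{thm:caract:d(h*)}. The fundamental matrix of $\partial_rF=\tfrac1r MF$ is $r^M$, which reads $D\operatorname{diag}(r^\gamma,r^{-\gamma})D^{-1}$ in the diagonalisable cases --- with $D$ the eigenvector matrices \eqref{eq:defn.D}, \eqref{eq:defn.D.negative} --- and $\mathbb I_2+M\log r$ in the nilpotent case.

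First I would diagonalise the singular term. For $\delta>0$ and $\delta<0$, setting $G:=D^{-1}F$ turns the system into two scalar equations $\partial_r G_i=\tfrac{\gamma_i}{r}G_i+s_i$, with $(\gamma_1,\gamma_2)=(\gamma,-\gamma)$ or $(i\gamma,-i\gamma)$, whose sources $s_i$ consist of the bounded coupling $(D^{-1}m\sigma_1D\,G)_i$ and an $L^2$ term, hence $s_i\in L^2(0,+\infty)$. With the integrating factors $u_i:=r^{-\gamma_i}G_i$ one has $u_i'=r^{-\gamma_i}s_i$, so $\int_0^{+\infty}|u_i'|^2r^{2a}\,dr<\infty$ for $a=\Re\gamma_i$; in both the subcritical regime ($0<\gamma<\tfrac12$) and the complex regime ($\delta<0$, $\Re\gamma_i=0$) these exponents are $<\tfrac12$. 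I would then apply the trace theorem (part \ref{item:trace.gamma<1/2} of \Cref{prop:hardy1d}) to $u_1$ and $u_2$: this yields the limits $u_1(0)=:A^+$ and $u_2(0)=:A^-$ together with the decay that, after multiplying by $D$, is exactly the first line of \eqref{eq:f0.gamma<1/2} (resp.\ \eqref{eq:f0.gamma<0}); the weighted integral bound then follows from the Hardy inequality (part \ref{item:hardy.gamma<1/2} of \Cref{prop:hardy1dhardy}) applied to $u_1-A^+$ and $u_2-A^-$. For $\delta=0$ the matrix $M$ is nilpotent: reducing it to Jordan form triangularises the system, so the component along $\ker M$ solves a regular equation, handled by the same trace argument with $a=0$, while the other component acquires the term $\log r$ upon integrating $\tfrac1r$ times the first; since $\log r\in L^2(0,1)$ the bounded coupling stays square--integrable and the argument closes, giving \eqref{eq:limit.f.gamma=0}.

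The determinant identities \eqref{eq:det.gamma.<1/2}, \eqref{eq:det.gamma.=0}, \eqref{eq:det.gamma.<0} are the boundary form of the Lagrange (Green) identity for $h^*$: integrating by parts in $\langle h^*F,\widetilde F\rangle-\langle F,h^*\widetilde F\rangle$, the only surviving contribution at the origin is $\lim_{r\to0}\bigl(f^+\overline{\widetilde f^-}-f^-\overline{\widetilde f^+}\bigr)$, that is the stated determinant. To evaluate it I substitute the asymptotics just obtained. In the real cases ($\delta\ge0$) the leading matrix is real, so the determinant factors as its determinant times the $2\times2$ determinant of the coefficient columns $(A^+,A^-)$ and $(\overline{\widetilde A^+},\overline{\widetilde A^-})$; for $\delta>0$ the powers $r^{\pm\gamma}$ cancel inside the latter, leaving the prefactor $\det D$ of \eqref{eq:det.gamma.<1/2}, while for $\delta=0$ the prefactor is $\det(\mathbb I_2+M\log r)=1+\log r\,\operatorname{tr}M+\log^2 r\,\det M=1$, giving \eqref{eq:det.gamma.=0}. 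In the case $\delta<0$ the two columns are acted on by $D$ and $\overline D$, so expanding over the columns of $D$ produces oscillatory cross--terms in $r^{\pm2i\gamma}$ that have no limit; their coefficients vanish identically thanks to the relation $(\lambda+k)^2+\gamma^2=\nu^2-\mu^2$ (equivalent to $\gamma^2=|\delta|$), and the surviving diagonal terms assemble into \eqref{eq:det.gamma.<0}. In every case the error terms are $o(r^{1/2-\Re\gamma})$ and vanish in the limit.

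The hard part is that diagonalising $\tfrac1r M$ does not decouple the full system: the bounded term $m\sigma_1$ reintroduces a coupling, so each scalar source $s_i$ still contains both unknowns, and one must verify $s_i\in L^2$ near $0$ from $F,g\in L^2$ alone (which suffices, as $m\sigma_1$ and $D^{-1}$ are bounded) and that this is compatible with the trace exponent $a=\Re\gamma_i$. This compatibility $a<\tfrac12$ is exactly what breaks down once $\gamma\ge\tfrac12$, so the borderline parts \ref{item:properties.domain.gamma>1/2} and \ref{item:properties.domain.gamma=1/2} require a separate argument. For $\delta>\tfrac14$ the profile $r^{-\gamma}$ is not square--integrable, which forces $A^-=0$ and, after a short bootstrap on the regularity of $F$, gives $F\in\mathcal D(h^0)$; combined with \eqref{eq:liberocontmax} this yields $\mathcal D(h^*)=\mathcal D(h^0)$. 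For $\delta=\tfrac14$ one uses instead the logarithmic trace (part \ref{item:trace.gamma=1/2} of \Cref{prop:hardy1d}) with $a=\tfrac12$: since the dominant profile $r^{-1/2}$ is again excluded by square--integrability, the two components cannot simultaneously sustain a nonzero product, and the weaker logarithmic control yields only the statement $\liminf_{r\to0}f^+\overline{f^-}=0$.
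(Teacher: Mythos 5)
Your proposal is correct and follows essentially the same route as the paper's own proof: the paper likewise absorbs the mass term and the data into an $L^2$ right-hand side, diagonalizes the singular Coulomb coefficient by a constant eigenvector matrix (your $G=D^{-1}F$ is exactly the paper's $\varphi^\pm = (MF)^\pm$ with $D=M^{-1}$), applies the trace and Hardy results of \Cref{prop:hardy1d} and \Cref{prop:hardy1dhardy} with exponents $a=\pm\gamma$ (resp.\ $a=0$, resp.\ $a=\pm 1/2$), handles $\delta=0$ through the nilpotency $M^2=0$, kills the non-square-integrable profiles to settle $\delta\geq 1/4$, and obtains the determinant limits by substituting the asymptotics. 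The remaining differences are cosmetic reorganizations of the same computations: the Fuchsian/Jordan-form language, the Lagrange-identity motivation for the boundary determinant, and, for $\delta<0$, expanding over the columns of $D$ and checking that the oscillatory cross-terms have vanishing coefficients instead of invoking the paper's identities $D^2\propto\mathbb{I}_2$ and $D\overline{D}$ antidiagonal.
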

\begin{proof}
We start noticing that for a general $ (f^+, f^-)
    \in \mathcal{D}(h^*),$
using the matrix representation of $h^*$ defined in \eqref{eq:dirac.spherical*}, we can deduce that
\begin{equation}\label{eq:f.inL^2:hf.inL^2}
\begin{pmatrix}
\partial_r+ \frac{k+\lambda}{r}&\frac{\nu-\mu}{r}\\
-\frac{\nu+\mu}{r}&\partial_r- \frac{k+\lambda}{r}\\
\end{pmatrix}
  \begin{pmatrix}
    f^+(r) \\ f^-(r)
  \end{pmatrix}
\in L^2(0,+\infty)^2.
\end{equation}
Set
\begin{equation}
\sqrt{\delta}:=
\begin{cases}
\gamma&\text{if}\ \delta\geq 0,\\
i\gamma&\text{if}\ \delta<0.
\end{cases}
\end{equation}
We consider the matrices
\begin{equation}\label{eq:rappr.M}
\begin{pmatrix}
  -(k+\lambda-\sqrt{\delta}) & - \nu + \mu \\
  \nu + \mu           & k+\lambda-\sqrt{\delta}
\end{pmatrix},
\quad
\begin{pmatrix}
  -\nu-\mu & -(k+\lambda+\sqrt{\delta}) \\
  -(k+\lambda+\sqrt{\delta}) & -\nu+\mu
\end{pmatrix}.
\end{equation}
In the case $\delta>0$ at
least one matrix in \eqref{eq:rappr.M}
is invertible: let $M$ be the first matrix if this is invertible
and the second otherwise.
In the case $\delta=0$
we can choose $M$ to be the first or the second one (in fact they are unitarily equivalent): we choose the first one.
Finally, in the case $\delta<0$ we can choose $M$ to be the first or the second one (in fact they are both invertible and unitarily equivalent): we choose the first one. Setting
\begin{equation}
 \label{eq:phi=Mf}
\begin{pmatrix}   
    \varphi^+(r) \\\varphi^-(r)
  \end{pmatrix}
  :=
   M 
  \begin{pmatrix}
    f^+(r) \\ f^-(r)
  \end{pmatrix}
\end{equation}
we get with an easy computation 
  \begin{equation}\label{eq:dopocommuta}
\begin{pmatrix}
\partial_r-\frac{\sqrt{\delta}}{r}&0\\
0&\partial_r+\frac{\sqrt{\delta}}{r}
\end{pmatrix}\cdot
\begin{pmatrix}   
    \varphi^+ \\\varphi^-
\end{pmatrix}
=
M\cdot
\begin{pmatrix}
\partial_r+ \frac{k+\lambda}{r}&\frac{\nu-\mu}{r}\\
-\frac{\nu+\mu}{r}&\partial_r- \frac{k+\lambda}{r}\\
\end{pmatrix}
  \begin{pmatrix}
    f^+ \\ f^-
  \end{pmatrix}.
  \end{equation}
Moreover it is easy to observe that, for all $a \in \C$ and $f$ regular enough we have 
\begin{equation}\label{eq:derivata.f/r}
  \left( \partial_r + \frac{a}{r} \right) f(r)=\left(\partial_r(r^a f)\right)r^{-a}.
\end{equation}
Combining \eqref{eq:f.inL^2:hf.inL^2}, \eqref{eq:dopocommuta} and \eqref{eq:derivata.f/r}
 we have
\begin{equation}\label{eq:radialigen}
     \int_0^{+\infty} \abs{r^{\sqrt{\delta}}\partial_r(r^{-\sqrt{\delta}}\varphi^+(r))}^2    \,dr 
     +
     \int_0^{+\infty}   \abs{r^{-\sqrt{\delta}}\partial_r(r^{\sqrt{\delta}}\varphi^-(r))}^2  \,dr
    <+\infty.
\end{equation}
We assume now $\delta\geq 0$, that is $\sqrt{\delta}=\gamma$. 
In this case $M$ is a real matrix.

From \eqref{eq:radialigen} we deduce that
\begin{equation}\label{eq:radiali}
     \int_0^{+\infty} r^{2\gamma} \abs{\partial_r(r^{-\gamma}\varphi^+(r))}^2    \,dr 
     +
     \int_0^{+\infty} r^{-2\gamma}  \abs{\partial_r(r^{\gamma}\varphi^-(r))}^2  \,dr
     <+\infty.
 \end{equation}
We can immediately get informations on the function $\varphi^-$. Indeed, 
 $r^{\gamma}\varphi^-$ is in 
$L^1_{loc}(0,+\infty)\cap L^1(0,1)$: 
choosing $a=-\gamma\leq 0$ in 
\emph{\ref{item:hardy.gamma<1/2}} of \Cref{prop:hardy1d} we get that 
$\varphi^- \in C[0,+\infty)$ and there exists a constant $A^- \in \C$, depending on $\varphi^-$, such that
\begin{equation}\label{eq:lim-}
  \lim_{r\to 0} \, \abs{\varphi^-(r)-A^-r^{-\gamma}}r^{-\frac12} = 0.
\end{equation}
Moreover,
thanks to \eqref{eq:hardy.gamma<1/2},
%choosing $a=-\gamma$ in \emph{\ref{item:hardy.gamma<1/2}} of \Cref{prop:hardy1dhardy}
we get
\begin{equation}\label{eq:cond.C}
  \int_0^{+\infty} \frac{\abs{\varphi^-(r) - A^- r^{-\gamma}}^2}{r^2}\,dr 
%  =
%  \int_0^{+\infty} \frac{\abs{r^{\gamma} \varphi^-(r) - A^-}^2}{r^{2-2\gamma}}\,dr
  \leq  \frac{4}{(2\gamma +1)^2} \int_0^{+\infty}  r^{-2\gamma} \abs{\partial_r(r^{\gamma}\varphi^-(r))}^2  \,dr
  < +\infty.
\end{equation}

In order to get informations on the function $\varphi^+$, we need to distinguish various cases,
depending on the size of $\gamma$.
\subsubsection*{Case $\gamma>1/2$}
Since $\gamma > 1/2$, we have that $r^{-\gamma}\varphi^+$ is in 
$L^1_{loc}(0,+\infty)\cap L^1(1,+\infty)$: 
 choosing $a=\gamma$ in
\emph{\ref{item:hardy.gamma>1/2}} of \Cref{prop:hardy1d},
we get
\begin{equation*}
  \lim_{r\to +\infty} \, \abs{\varphi^+(r)}r^{-\frac12} = 0,
\end{equation*}
observing that under our assumptions $\varphi^+ (+\infty)=0$.
Thanks to \eqref{eq:hardy.gamma>1/2}
%\emph{\ref{item:hardy.gamma>1/2}} of \Cref{prop:hardy1dhardy}
and from 
%\eqref{eq:dibase1} and 
\eqref{eq:radiali} we have that
\begin{equation}\label{eq:gamma>12+}
    \int_0^{+\infty} \frac{\abs{\varphi^+(r)}^2}{r^{2}}\,dr 
    = \int_0^{+\infty} \frac{\abs{r^{-\gamma}\varphi^+(r)}^2}{r^{2-2\gamma}}\,dr
    \leq \frac{4}{(2\gamma-1)^2} 
     \int_0^{+\infty} r^{2\gamma} \abs{\partial_r(r^{-\gamma}\varphi^+(r))}^2    \,dr
     <+ \infty,
   \end{equation}

Moreover, since $\varphi^- \in L^2(0,+\infty)$ behaves like $A^-r^{-\gamma}$ 
next to the origin (i.e.~\eqref{eq:cond.C} holds), we have that 
\begin{equation}
  \label{eq:Cmustbe0}
  \begin{split}
    \int_0^1 \frac{\abs{A^-}^2}{r^{2\gamma}}\,dr 
    &\leq 
    2 \int_0^1 \abs{\varphi^-(r)}^2 \,dr 
    + 2 \int_0^1 \abs{\varphi^-(r) - A^- r^{-\gamma}}^2 \,dr 
    \\
    \leq & 
    2 \int_0^{+\infty} \abs{\varphi^-(r)}^2 \,dr 
    + 2 \int_0^{+\infty} \frac{\abs{\varphi^-(r) - A^- r^{-\gamma}}^2 }{r^2}\,dr
    < + \infty.
  \end{split}
\end{equation}
Since $\gamma>1/2$, necessarily this implies $A^-=0$ in \eqref{eq:cond.C}.
Combining \eqref{eq:radiali}, \eqref{eq:cond.C} (for $A^-=0$) and \eqref{eq:gamma>12+}  we can conclude,
thanks to the invertibility of $M$,
\begin{equation}\label{eq:fginD(r^-1/2)}
\int_0^{+\infty} \frac{|f^+(r)|^2}{r^2}\,dr
+
\int_0^{+\infty} \frac{|f^-(r)|^2}{r^2}\,dr
  < + \infty.
\end{equation}
Thanks to \eqref{eq:liberocontmax}, we get $\D(h^0)\subset \D(h^*)$.
From \eqref{eq:fginD(r^-1/2)} and the by the definition of $\D(h^*)$ (see \eqref{eq:dirac.spherical*}) we get that $\left(\de_r\pm\frac{k}{r}\right)f^\pm\in L^2(0,+\infty)$ and so $\D(h^*)=\D(h^0)$.
\subsubsection*{Case $\gamma=1/2$}
Reasoning as in the previous step, we get that 
\eqref{eq:cond.C} holds for $A^-=0$.
Thanks to
\emph{\ref{item:hardy.gamma=1/2}} of  \Cref{prop:hardy1d}
we have that $\varphi^+ \in C(0,+\infty)$ and by \eqref{eq:hardy.gamma=1/2}
\begin{equation*}
  \int_0^{1/2}\frac{ |\varphi^+(r)|^2}{r^2\log^2 \left(\frac1r\right)}\,dr 
  =
  \int_0^{1/2}\frac{|r^{-1/2}\varphi^+(r)|^2}{r^2\log^2 \left(\frac1r\right)}\,dr
  \leq 4
  \int_0^{+\infty} r\abs{\partial_r(r^{-1/2}\varphi^+(r))}^2    \,dr 
  + R
%  \leq 4\norm{\M}^2 \norm{(-i\alpha\cdot \nabla + \mathbb{V}(x))\psi}_{L^2}^2 < + \infty  
  < +\infty,
\end{equation*}
for $R>0$ a finite constant,
that implies that
\begin{equation}\label{eq:liminfgamma=12}
    \liminf\limits_{r\to 0} \frac{\abs{\varphi^+(r)}}{r^{1/2}\log(1/r)} =0.
\end{equation}
We can conclude \eqref{eq:det.gamma.geq1/2} thanks to 
\eqref{eq:lim-} (with $A^-=0$) and \eqref{eq:liminfgamma=12}, 
remarking the property of the inferior limit:
\begin{equation*}
  \liminf_{x\to x_0} (f(x)g(x)) = \left( \liminf_{x\to x_0}f(x)\right) \left(\lim_{x\to x_0} g(x)\right),
\end{equation*}
when $\lim_{x\to x_0} g(x)$ exists.
\subsubsection*{Case $0<\gamma< 1/2$}
%\label{sec:sec:case-gamma<12}
On these terms $r^{-\gamma}\varphi^+$ is in 
$L^1_{loc}(0,+\infty)\cap L^1(0,1)$. 
Choosing $a=\gamma$ in \emph{\ref{item:hardy.gamma<1/2}} of \Cref{prop:hardy1d}
we have that $\varphi^+ \in C[0,+\infty)$ and 
there exists a constant $A^+ \in \C$, depending on $\varphi^+$, such that
\begin{equation}\label{eq:lim+}
  \lim_{r\to 0} \, \abs*{\varphi^+(r) - A^+ r^{\gamma} } r^{-\frac12} = 0,
\end{equation}
and moreover, by \eqref{eq:hardy.gamma<1/2},
%\emph{\ref{item:hardy.gamma<1/2}} of \Cref{prop:hardy1dhardy}
we get
\begin{equation}\label{eq:cond.B}
  \int_0^{+\infty} \frac{\abs{\varphi^+(r) - A^+ r^{\gamma}}^2}{r^2}\,dr
  \leq \frac{4}{(2\gamma-1)^2}
     \int_0^{+\infty} r^{2\gamma} \abs{\partial_r(r^{-\gamma}\varphi^+(r))}^2    \,dr
     < +\infty.
\end{equation}
We set $D:=M^{-1}$. 
%and $\begin{pmatrix}
%A^+\\
%A^-
%\end{pmatrix}:=D\begin{pmatrix}
%B^+\\
%B^-
%\end{pmatrix}$.
Thanks to \eqref{eq:phi=Mf}, \eqref{eq:lim-}, \eqref{eq:lim+} we get the first equation in 
\eqref{eq:f0.gamma<1/2}. Moreover thanks to %\eqref{eq:dibase1}, 
\eqref{eq:radiali}, \eqref{eq:cond.C} and \eqref{eq:cond.B} we get
the second equation in \eqref{eq:f0.gamma<1/2}.
%\begin{equation}
%  \norm*{\frac{1}{{r}} \left(
%      \begin{pmatrix}
%        f^+ \\ f^-
%      \end{pmatrix}
% -  D \begin{pmatrix} A^+ r^\gamma \\ A^- r^{-\gamma} \end{pmatrix} \right) 
%    }_{L^2(0,\infty)^2}    
%%  \frac{2\norm{\mathbb M^{-1}}\norm{\mathbb M}}{\abs{2\gamma -1}} \norm{(-i\alpha\cdot \nabla + \mathbb{V}(x))\psi}_{L^2(\Rt)} 
%  < + \infty.
%\end{equation}
Finally
\begin{equation}
  \label{eq:11}
  \begin{split}
  %   \begin{vmatrix}
  %     \mathcal C_{f,g}^+(r) & \overline{\mathcal C_{\widetilde f,\widetilde g}^+(r)} \\
  %     \mathcal C_{f,g}^-(r) & \overline{\mathcal C_{\widetilde f,\widetilde g}^- (r)}
  %   \end{vmatrix} 
  % =& \,
    \det(M)\cdot
    \begin{vmatrix}
      f^+(r) & \overline{\widetilde{f^+}(r)} \\
      f^-(r) & \overline{\widetilde{f^-}(r)} \\
    \end{vmatrix}
    =&
    \begin{vmatrix}
      \varphi^+ (r) & \overline{\widetilde \varphi^+ (r)}  \\  
      \varphi^-(r)  &  \overline{\widetilde \varphi^-(r) }
    \end{vmatrix}
      =\varphi^+ (r)  \overline{\widetilde \varphi^-(r) }
  - \varphi^-(r)  \overline{\widetilde \varphi^+ (r)} 
  \\
  = &\varphi^+(r) \overline{(\widetilde \varphi^- (r) - \widetilde A^- r^{-\gamma})}
  - (\varphi^-(r) - A^- r^{-\gamma}) \overline{\widetilde \varphi^+(r) } 
  \\
  &
    + (\varphi^+(r)  - A^+ r^{\gamma})\overline{\widetilde A^-} r^{-\gamma}
  - A^- r^{-\gamma} \overline{( \widetilde \varphi^+(r)  - \widetilde A^+ r^{\gamma})}
  \\
  &
  + A^+\overline{\widetilde A^-} - A^- \overline{\widetilde A^+}.
\end{split}
\end{equation}
Thanks to \eqref{eq:phi=Mf}, \eqref{eq:lim-}, \eqref{eq:lim+}
observing that the first four terms at right hand side are infinitesimal for $r \to 0$, we can conclude \eqref{eq:det.gamma.<1/2}.
\subsubsection*{Case $\gamma=0$}
%\label{sec:case-gamma=0}
We recall that, in this case,  the two possibilities we give for the matrix $M$ in \eqref{eq:rappr.M} are unitarily equivalent. For this reason we will always choose the first one, that is
\begin{equation*}
M:=\begin{pmatrix}
-(k+\lambda)&-\nu+\mu\\
\nu+\mu&k+\lambda
\end{pmatrix}.
\end{equation*}
We remind that \eqref{eq:f.inL^2:hf.inL^2} now reads
\begin{equation}\label{eq:daunire1}
  \begin{pmatrix}    f^+ \\ f^-   \end{pmatrix}'
  - \frac{1}{r}
  \begin{pmatrix}    \varphi^+ \\ \varphi^-   \end{pmatrix}
  \in L^2(0,\infty)^2.
\end{equation}
Moreover, choosing $a=0$ in 
\emph{\ref{item:hardy.gamma<1/2}} of \Cref{prop:hardy1d} we get from %\eqref{eq:dibase1}, 
\eqref{eq:radiali} 
that $(\varphi^+,\varphi^-) \in C[0,+\infty)^2$ and 
there exists $(B^+, B^-) \in \C^2$, such that
\begin{equation*}
  \lim_{r\to 0} \abs*{
    \begin{pmatrix} \varphi^+(r) \\ \varphi^-(r) \end{pmatrix}
    -
    \begin{pmatrix} B^+ \\ B^- \end{pmatrix}
  } r^{-1/2}=0.
\end{equation*}
Moreover by \eqref{eq:hardy.gamma<1/2},
%\emph{\ref{item:hardy.gamma<1/2}} of \Cref{prop:hardy1dhardy}
we get
\begin{equation*}
  \begin{split}
    \int_0^{+\infty} \frac{1}{r^2}
      \abs*{\begin{pmatrix} \varphi^+(r) \\ \varphi^-(r) \end{pmatrix}
        -
        \begin{pmatrix} B^+ \\ B^- \end{pmatrix}}^2
    \,dr
    \leq & 4
    \int_0^{+\infty} 
    \abs*{
      \partial_r 
      \begin{pmatrix}
        \varphi^+(r) \\ \varphi^-(r)
      \end{pmatrix}
      }^2 
%      \\
%    \leq & 4
%    \norm{M}_2^2 \, \norm{(-i\alpha\cdot \nabla + \mathbb{V}(x))\psi }_{L^2\cap\mathcal{H}}^2 
    < + \infty.
  \end{split}
\end{equation*}
In particular, this shows that
\begin{equation}
\label{eq:daunire2}
\frac{1}{r}\left(
{    \begin{pmatrix} \varphi^+ \\ \varphi^- \end{pmatrix} - \begin{pmatrix}B^+ \\ B^-\end{pmatrix}}\right)
    \in L^2(0,+\infty)^2.
\end{equation}
% indeed otherwise $M (A_1 \, A_2)^t r^{-1}$ would be in $L^2(0,+\infty)^2$.
Thanks to \eqref{eq:daunire1} and \eqref{eq:daunire2} we get that 
\begin{equation*}
\left[
  \begin{pmatrix}
    f^+ \\f^-
  \end{pmatrix}
  -
  \begin{pmatrix}
    B^+ \\ B^-
  \end{pmatrix}
  \log r
\right]'
%  \begin{pmatrix}
%    f^+ \\ f^-
%  \end{pmatrix}' 
%  - \frac1r
%  \begin{pmatrix}
%    A_1 \\ A_2
%  \end{pmatrix}
  \in L^2(0,+\infty)^2. 
\end{equation*}
Applying again  
\emph{\ref{item:hardy.gamma<1/2}} of \Cref{prop:hardy1d} with $a=0$ we get that 
$f^\pm-B^\pm\log r \in C[0,+\infty)$ and 
there exist constants $A^\pm\in \C$,  such that
\begin{equation}\label{eq:completa1}
\lim_{r\to 0}
\abs*{
  \begin{pmatrix}
    f^+(r) \\ f^-(r)
  \end{pmatrix}
  -
  \begin{pmatrix}
    B^+ \\ B^-
  \end{pmatrix}
  \log r
  -
  \begin{pmatrix}
    A^+ \\ A^-
  \end{pmatrix}
}
r^{-1/2}=0,
\end{equation}
moreover, by \eqref{eq:hardy.gamma<1/2},
%\emph{\ref{item:hardy.gamma<1/2}} of \Cref{prop:hardy1dhardy}
we get
\begin{equation}\label{eq:completa}
\int_0^{+\infty} 
\frac{1}{r^2}
\abs*{
  \begin{pmatrix}
    f^+(r) \\ f^-(r)
  \end{pmatrix}
  -
  \begin{pmatrix}
    B^+ \\ B^-
  \end{pmatrix}
  \log r
  -
  \begin{pmatrix}
    A^+ \\ A^-
  \end{pmatrix}
}^2dr<+\infty.
\end{equation}
Since $M^ 2=0$, from \eqref{eq:phi=Mf} and \eqref{eq:daunire2} we get
\begin{equation*}
  \frac{1}{r} M \begin{pmatrix}B^+ \\ B^-\end{pmatrix} =
\frac{1}{r} M \left(
{    \begin{pmatrix} \varphi^+ \\ \varphi^- \end{pmatrix} - \begin{pmatrix}B^+ \\ B^-\end{pmatrix}}\right)
    \in L^2(0,+\infty)^2,
\end{equation*}
 that implies $M (B^+ \, B^-)^t =0 $.
As a consequence, from \eqref{eq:completa} we get that 
\begin{equation}\label{eq:cond.C.gamma=0}
  \frac{1}{r}\left[ 
   \begin{pmatrix}
     \varphi^+ \\ \varphi^-
   \end{pmatrix}
   - M 
   \begin{pmatrix}
     A^+ \\ A^-
   \end{pmatrix}
   \right] 
=
 \frac{1}{r}\left[M 
   \begin{pmatrix}
     f^+ \\ f^-
   \end{pmatrix}
   - M 
   \begin{pmatrix}
     A^+ \\ A^-
   \end{pmatrix}
   \right] 
    \in L^2(0,+\infty)^2.
\end{equation}
Such a condition and \eqref{eq:daunire2} gives that 
\begin{equation*}
  \begin{pmatrix}
    B^+ \\ B^- 
  \end{pmatrix}
  =
  M
  \begin{pmatrix}
    A^+ \\ A^-
  \end{pmatrix},
\end{equation*}
that lets us conclude \eqref{eq:limit.f.gamma=0} thanks to \eqref{eq:completa1}.

In order to exploit the linearity of the determinant in the columns,
in the following we commit abuse of notation, denoting
\begin{equation}\label{eq:abuse}
  \abs*{
    \begin{pmatrix}
      a \\ c
    \end{pmatrix}
    \begin{pmatrix}
      b \\ d
    \end{pmatrix}
  }
  :=
  \begin{vmatrix}
    a & b \\
    c & d
  \end{vmatrix}.
\end{equation}
We have that
\begin{equation*}
\begin{split}
  \abs*{
  \begin{matrix}
       f^+(r) &   \overline{\widetilde{f^+}(r)}
       \\ f^-(r) &\overline{\widetilde{f^-}(r)}
  \end{matrix}}
  =&
  \abs*{
    \begin{pmatrix}
      f^+(r) \\ f^-(r)
    \end{pmatrix}
    - (M \log r + \mathbb{I}_2) 
    \begin{pmatrix}
      A^+ \\ A^-
    \end{pmatrix}
    \quad
    \overline{\begin{pmatrix}
      \widetilde{f^+}(r) \\ \widetilde{f^-}(r)
    \end{pmatrix}
    - (M \log r + \mathbb{I}_2) 
    \begin{pmatrix}
      \widetilde A^+ \\ \widetilde A^-
    \end{pmatrix}}
    } \\
    &+ 
  \abs*{
     (M \log r + \mathbb{I}_2) 
    \begin{pmatrix}
      A^+ \\ A^-
    \end{pmatrix}
    \quad
    \overline{\begin{pmatrix}
      \widetilde{f^+}(r) \\ \widetilde{f^-}(r)
    \end{pmatrix}
    - (M \log r + \mathbb{I}_2) 
    \begin{pmatrix}
      \widetilde A^+ \\ \widetilde A^-
    \end{pmatrix}}
    } 
    \\
    &+
      \abs*{
    \begin{pmatrix}
      f^+(r) \\ f^-(r)
    \end{pmatrix}
    - (M \log r + \mathbb{I}_2) 
    \begin{pmatrix}
      A^+ \\ A^-
    \end{pmatrix}
    \quad
    \overline{
     (M \log r + \mathbb{I}_2) 
    \begin{pmatrix}
      \widetilde A^+ \\ \widetilde A^-
    \end{pmatrix}}
    } \\
    &+
  \abs*{
    (M \log r + \mathbb{I}_2) 
    \begin{pmatrix}
      A^+ \\ A^-
    \end{pmatrix}
    \quad
    \overline{
     (M \log r + \mathbb{I}_2) 
    \begin{pmatrix}
      \widetilde A^+ \\ \widetilde A^-
    \end{pmatrix}}
    }.
  \end{split}
\end{equation*}
Since $M^2=0$ we get $\det(\mathbb{I}_2+M\log r)=1$. Thanks to the first equation in \eqref{eq:limit.f.gamma=0}, the first three terms at 
right hand side tend to 0 as $r \to 0$, and  we can conclude \eqref{eq:det.gamma.=0}.

\subsubsection{Case $\delta < 0$}
We $\sqrt{\delta}=i\gamma$. 
In this case $M$ is an invertible complex matrix with inverse $D:=M^{-1}$ given by \eqref{eq:defn.D.negative}. 
Denoting with $\overline D$ the complex conjugate matrix of $D$ we have
\begin{equation}\label{eq:condD^2}
D^2=\frac{1}{-2i\gamma(k+\lambda-i\gamma)} \I_2,\quad
D\overline{D}\,=\frac{1}{2i\gamma(\nu^2-\mu^2)}
\begin{pmatrix}
0&\nu-\mu\\
\nu+\mu&0
\end{pmatrix}.
\end{equation}

Since $|r^{\pm i\gamma}|=1$, from \eqref{eq:radialigen} we deduce
\begin{equation*}
     \int_0^{+\infty} \abs{\partial_r(r^{-i\gamma}\varphi^+(r))}^2    \,dr 
     +
     \int_0^{+\infty} \abs{\partial_r(r^{i\gamma}\varphi^-(r))}^2  \,dr<+\infty.
 \end{equation*}
Choosing $a=0$ in 
\emph{\ref{item:hardy.gamma<1/2}} of \Cref{prop:hardy1d} we get that 
$r^{\mp i \gamma }\varphi^{\pm} \in C[0,+\infty)$ and there exist two constants $A^\pm \in \C$, depending on $\varphi^{\pm}$, such that
\begin{equation}\label{eq:limpm}
  \lim_{r\to 0} \, \abs{\varphi^\pm(r)-A^\pm r^{\pm i\gamma}}r^{-\frac12} = 0.
\end{equation}
Moreover
by \eqref{eq:hardy.gamma<1/2},
%\emph{\ref{item:hardy.gamma<1/2}} of \Cref{prop:hardy1dhardy}
we get
\begin{equation}\label{eq:cond.Cpm} 
  \int_0^{+\infty} \frac{\abs{\varphi^\pm(r) - A^\pm r^{\pm i\gamma}}^2}{r^2}\,dr 
  \leq  {4} \int_0^{+\infty}  \abs{\partial_r(r^{\mp i\gamma}\varphi^\pm(r))}^2  \,dr
  < \infty.
\end{equation}
We deduce \eqref{eq:f0.gamma<0}
from \eqref{eq:phi=Mf}, \eqref{eq:limpm}, \eqref{eq:cond.Cpm}.
Finally, with the abuse of notations in \eqref{eq:abuse}, 
 from \eqref{eq:condD^2} we get
\begin{equation}\label{eq:112}
\begin{split}
 \abs*{\begin{matrix}
       f^+(r) &   \overline{\widetilde{f^+}(r)}
       \\ f^- (r)&\overline{\widetilde{f^-}(r)}
  \end{matrix}}
  =&
  \abs*{
  D\begin{pmatrix}
  \varphi^+(r)\\
  \varphi^-(r)
  \end{pmatrix}
  \overline{
   D\begin{pmatrix}
  \widetilde{\varphi^+}(r)\\
 \widetilde{ \varphi^-}(r)
  \end{pmatrix}
  }}=
  \frac{1}{\det D}
	\abs*{
	D^2\begin{pmatrix}
  \varphi^+(r)\\
  \varphi^-(r)
  \end{pmatrix}
  D\overline{D}
  \begin{pmatrix}
  \overline{\widetilde{\varphi^+}(r)}\\
 \overline{\widetilde{ \varphi^-}(r)}
  \end{pmatrix}
  }  \\
  =& 
  \frac{1}{2i\gamma(\mu^2-\nu^2)}
   \abs*{\begin{matrix}
       \varphi^+(r) &   (\nu-\mu)\overline{\widetilde{\varphi^-}(r)}
       \\ \varphi^-(r) &(\nu+\mu)\overline{\widetilde{\varphi^+}(r)}
  \end{matrix}}.
  \end{split}
  \end{equation}
We prove immediately \eqref{eq:det.gamma.<0} from \eqref{eq:112}, reasoning as 
in the proof of \eqref{eq:11}.
\end{proof}

We can now finally prove Theorems \ref{thm:Hk.good}, \ref{thm:Hk.subandcritical}, \ref{thm:Hk.supercritical}.
\begin{proof}[Proof of \Cref{thm:Hk.good}]
\emph{\ref{thm:Hk.gamma>12}} 
\,  Thanks to\emph{ \ref{item:properties.domain.gamma>1/2} }in  \Cref{thm:caract:d(h*)},
  we already know that 
  \begin{equation*}
 \mathcal{D}(h^*) =\mathcal{D}(h^0).
  \end{equation*}
  This gives immediately that $h^*$ is symmetric, that is $h$ is essentially self-adjoint on $C^\infty_c(0,+\infty)^2$.
%  and 
%  $\mathcal{D}\left(\overline{\mathring{h}}\right)=\mathcal{D}(\mathring{h}^*)$.
  
\emph{\ref{thm:Hk.gamma=12}} 
We show that $h^*$ is symmetric on $\mathcal{D}(h^*)$, that implies the essential self-adjointness of $h$.
Indeed
for all $(f^+, f^-) \in \mathcal{D}(h^*)$ we have
\begin{equation}
\label{eq:is.it.symmetric}
\begin{split}
     \int_0^{+\infty} h^* (f^+,f^-) \cdot &\overline{(f^+,f^-)}\,dr
    -  \int_0^{+\infty}  (f^+,f^-) \cdot \overline{h^* (f^+,f^-)}\,dr
    \\
    &=  \lim_{n} \int_{\epsilon_n}^{+\infty} h^* (f^+,f^-) \cdot \overline{(f^+,f^-)}\,dr
    -  \int_{\epsilon_n}^{+\infty}  (f^+,f^-) \cdot \overline{h^* (f^+,f^-)}\,dr
    \\
    &=-\lim_{n} 
    \begin{vmatrix}
      f^+({\epsilon_n}) & \overline{f^+({\epsilon_n})}\\
      f^-({\epsilon_n}) & \overline{f^-({\epsilon_n})}\\
    \end{vmatrix},
    \end{split}
\end{equation}
for any $(\epsilon_n)_n$, $\epsilon_n \to 0$.
The limit in \eqref{eq:is.it.symmetric} exists 
for every choice of the sequence $(\epsilon_n)_n$, $\epsilon_n \to 0$, since $(f^+,f^-)\in \mathcal{D}(h^*)$.
Moreover, taking the sequence associated to the inferior limit,
 it vanishes thanks to \eqref{eq:det.gamma.geq1/2}.
Finally, it is easy to show that $\mathcal{D}(h^0)  \subset \mathcal{D}(h^*)$.
\end{proof}

For the proof of  \Cref{thm:Hk.subandcritical} we will need the following Lemma.
\begin{lemma}\label{lem:determinat.matrix.null}
Let $V$ be a complex proper subspace of $\C^2$. Then the following are equivalent:
\begin{enumerate}[label=(\roman*)]
\item\label{item:i.determinant}
$(A^+,A^-)\in V$ if and only if
$\begin{vmatrix}
A^+& \overline {A^+}\\
A^-& \overline {A^-}
\end{vmatrix}
=0$,
\item\label{item:ii.determinant}
$(A^+,A^-)\in V$ if and only if 
$
A^+ \overline{A^-}\in\R
$,
\item\label{item:iii.determinant}
$V=\seq{(0,0)}$ or $V=V_\theta:=\seq*{
(A^+,A-)\in\C^2:\,
A^+\sin\theta
+
A^-\cos\theta=0}
$,
for $\theta\in[0,\pi)$.
\end{enumerate} 
\end{lemma}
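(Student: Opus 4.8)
The plan is to dispose of the equivalence between \ref{item:i.determinant} and \ref{item:ii.determinant} by a one-line algebraic identity, and then to reduce the substance of the lemma to a classification of one-dimensional subspaces. First I would expand the determinant: writing $z:=A^+\overline{A^-}$, one has
\[
\begin{vmatrix} A^+ & \overline{A^+} \\ A^- & \overline{A^-}\end{vmatrix}
= A^+\overline{A^-}-A^-\overline{A^+}=z-\overline z = 2i\,\Im z .
\]
Hence the determinant vanishes exactly when $z=A^+\overline{A^-}\in\R$, so the conditions appearing in \ref{item:i.determinant} and \ref{item:ii.determinant} are the same constraint on a vector $(A^+,A^-)$, and the equivalence of these two items is immediate.

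Next I would prove that \ref{item:iii.determinant} implies \ref{item:ii.determinant}. The case $V=\seq{(0,0)}$ is trivial. For $V=V_\theta$ I would use that $V_\theta$ is the complex span of the \emph{real} vector $(\cos\theta,-\sin\theta)$, so a generic element has the form $c(\cos\theta,-\sin\theta)$ with $c\in\C$; then $A^+\overline{A^-}=-\abs{c}^2\cos\theta\sin\theta\in\R$, which is exactly \ref{item:ii.determinant}.

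The heart of the matter is the converse, \ref{item:ii.determinant} implies \ref{item:iii.determinant}. Since $V$ is a proper subspace of $\C^2$ we have $\dim_\C V\in\seq{0,1}$, and the zero case gives $V=\seq{(0,0)}$. If $\dim_\C V=1$, I would write $V=\C w$ with $w=(w^+,w^-)\neq(0,0)$ and apply \ref{item:ii.determinant} to $w$, obtaining $w^+\overline{w^-}\in\R$. The key step is to convert this reality condition into the statement that $w$ is a complex multiple of a real vector: if $w^-=0$ then $w\parallel(1,0)$ and $V=V_0$; if $w^-\neq0$ then $w^+/w^-=w^+\overline{w^-}/\abs{w^-}^2\in\R$, so $w=w^-(t,1)$ with $t\in\R$, again real up to a complex factor. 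It then remains to realize this real line as some $V_\theta$ by solving $t\sin\theta+\cos\theta=0$ (respectively $\sin\theta=0$ when $w^-=0$), which admits a $\theta\in[0,\pi)$ because $\theta\mapsto(\cos\theta,-\sin\theta)$ parameterizes all real directions over $[0,\pi)$, the vertical direction $(0,1)$ corresponding to $\theta=\pi/2$. Since $w\in V_\theta$ and both subspaces are one-dimensional, $V=V_\theta$.

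The only delicate point, and the main obstacle, is precisely this last classification: one must verify that the reality condition forces the line to be spanned by a real vector, and that every such real line is attained by a parameter $\theta\in[0,\pi)$, paying attention to the degenerate directions $w^+=0$ and $w^-=0$, which correspond to $\theta=\pi/2$ and $\theta=0$ respectively. Everything else is routine linear algebra on $\C^2$.
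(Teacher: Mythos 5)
Your proposal is correct and takes essentially the same route as the paper's proof: the identity $A^+\overline{A^-}-A^-\overline{A^+}=2i\Im\left(A^+\overline{A^-}\right)$ for the equivalence of (i) and (ii), the direct check that elements of $V_\theta$ satisfy the reality condition, and, for (ii) $\Rightarrow$ (iii), the reduction to a one-dimensional $V=\C w$ whose spanning vector is forced by the reality of $w^+\overline{w^-}$ to be a complex multiple of a real vector, realized as $V_\theta$ for some $\theta\in[0,\pi)$. The only difference is cosmetic: the paper writes the spanning vector in polar coordinates ($A_0^+=ue^{is}$, $A_0^-=ve^{it}$, so reality of $uve^{i(s-t)}$ gives $s=t$ or $s=t+\pi$, tacitly assuming $u,v\neq 0$), while you use the real ratio $w^+/w^-$ and treat the degenerate directions $w^-=0$ and $w^+=0$ separately, which is if anything slightly more careful.
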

\begin{proof}
It is easy to prove that 
\emph{\ref{item:i.determinant}} is equivalent to \emph{\ref{item:ii.determinant}} and that
\emph{\ref{item:iii.determinant}} implies \emph{\ref{item:ii.determinant}}. Let us prove that \emph{\ref{item:ii.determinant}} implies \emph{\ref{item:iii.determinant}}. Let $V$ be as in \emph{\ref{item:ii.determinant}}:
$V$ can not be the whole $\C^2$, so $V$ is a proper subspace of $\C^2$, 
i.e. it has dimension zero or one. 
In the first case $V=\seq{(0,0)}$. Let us suppose now that $V$ has dimension one, that is $V=\langle(A^+_0,A^-_0)\rangle$ for some $(A^+_0,A^-_0)\neq (0,0)$
 with $A^+_0\overline{A^-_0}\in \R$. 
Using polar coordinates we get $A^+_0=u e^{i s}$ and $A^-_0=v e^{i t}$, 
then 
$A^+_0\overline{A^-_0}=uv e^{i (s-t)}$ which implies that $s=t$ or $s=t+\pi$, that is equivalent to say that there are $p,q\in\R$, 
$(p,q)\neq (0,0)$ such that $pA^+_0+qA^-_0=0$. 
We can always suppose that $p\geq 0$ (otherwise we replace $(p,q)$ with $(-p,-q)$) and $|p|^2+|q|^2=1$ 
(otherwise we replace $(p,q)$ with $(p^2+q^2)^{-1/2}(p,q)$). Then $p=\sin\theta$ and $q=\cos\theta$ for $\theta\in[0,\pi)$.
\end{proof}

\begin{proof}[Proof of \Cref{thm:Hk.subandcritical}]
\emph{\ref{thm:Hk.gamma<12}}
   % We determine now some \emph{a priori} properties of self adjoint extensions
   % of $H$: 
   Let $t$ be a self adjoint extension of $h$, that is $h\subseteq t = t^* \subseteq h^*$.
   Thanks to \emph{\ref{item:properties.domain.gamma<1/2}} in \Cref{thm:caract:d(h*)},
   we have that for all $(f^+,f^-) \in \mathcal{D}(t)$
   there exist constants $A^\pm \in \C$
   such that 
   \begin{equation*}
     \lim_{r\to 0} 
     \abs*{
     \begin{pmatrix} f^+(r) \\ f^-(r) \end{pmatrix} 
     - D 
     \begin{pmatrix}
       A^+ r^{\gamma} \\
       A^- r^{-\gamma}
     \end{pmatrix}}
            r^{-1/2}
     =0,
   \end{equation*}
   where $D$ is the invertible real matrix defined in \eqref{eq:defn.D}.
   Moreover, the map $(f^+,f^-) \in \mathcal{D}(t)\mapsto (A^+,A^-)\in \C^2$ 
   is a homomorphism of linear spaces, thus its image is a linear subspace of $\C^2$: 
   we will denote it $V$.

   Since $t\subseteq t^*\subseteq h^*$, 
   for all $(f^+,f^-) \in \mathcal{D}(t)$ 
   then necessarily,  as in the proof of \emph{\ref{thm:Hk.gamma=12}} of \Cref{thm:Hk.good},
   \begin{equation}\label{eq:nellaproof.lim}
     \lim_{r\to 0} 
     \begin{vmatrix}
       f^+(r) & \overline{f^+(r)} \\
       f^-(r) & \overline{f^-(r)} 
     \end{vmatrix}
     = 0.
   \end{equation}
   The equations \eqref{eq:nellaproof.lim} and \eqref{eq:det.gamma.<1/2} 
   imply that
   \begin{equation*}
     \begin{vmatrix}
       A^+ & \overline{A^+} \\
       A^- & \overline{A^-}
     \end{vmatrix}
     = 2i \Im(A^+ \overline{A^-})=0,
     \quad \text{ for all }(A^+,A^-) \in V.
   \end{equation*}
   Thanks to \Cref{lem:determinat.matrix.null}, 
   $V=V_\theta:=\seq*{(A^+,A^-)\in\C^2:\,A^+\sin\theta+A^-\cos\theta=0}$ for some
   $\theta \in [0,\pi)$
   or $V=\seq{0}$. This last case can not happen, since $t$ can not have
   proper symmetric extensions, being self-adjoint.
   In conclusion, all the self-adjoint extensions of $h$ are of the form $t(\theta)$ for $\theta\in [0,\pi)$, 
   and \eqref{eq:defn.Hk.<1/2} holds. 

   Conversely, we prove that for all $\theta \in [0,\pi)$ the operators $t(\theta)$ are self-adjoint.
   It is easy to check that they are symmetric and that they extend $h$.
   Let $(f^+,f^-) \in \mathcal{D}\left(t(\theta)^*\right)$: by the definition 
   %of adjoint of an operator, 
   there exists $(f^+_0,f^-_0) \in L^2(0,+\infty)^2$ such that 
   $\langle (f^+,f^-), t(\theta)(\widetilde{f}^+,\widetilde{f}^-) \rangle = \langle (f^+_0,f^-_0), (\widetilde{f}^+,\widetilde{f}^-)\rangle$ for all 
   $(\widetilde{f}^+,\widetilde{f}^-) \in \mathcal{D}\left(t(\theta)\right)$, and $(f^+_0,f^-_0) =t(\theta)^*(f^+,f^-)$. 
   Since $t(\theta) \subseteq t(\theta)^* \subseteq h^*$, 
   \begin{equation*}
   \begin{split}
     \langle h^*(f^+,f^-) , (\widetilde{f}^+,\widetilde{f}^-) \rangle_{L^2} &=     
     \langle t(\theta)^* (f^+,f^-),(\widetilde{f}^+,\widetilde{f}^-)\rangle_{L^2} =
     \langle (f^+_0,f^-_0) ,  (\widetilde{f}^+,\widetilde{f}^-) \rangle_{L^2} \\
     &=
     \langle (f^+,f^-) , t(\theta) (\widetilde{f}^+,\widetilde{f}^-) \rangle_{L^2} =
     \langle (f^+,f^-) , h^*(\widetilde{f}^+,\widetilde{f}^-) \rangle_{L^2},
     \end{split}
   \end{equation*}
   and this happens if and only if 
   \begin{equation}\label{eq:s.a.condition}
     \begin{vmatrix}
       A^+ & \overline{\widetilde A^+} \\
       A^- & \overline{\widetilde A^-}
     \end{vmatrix}
     =
     \lim_{r\to 0}    
     \begin{vmatrix}
       f^+(r) & \overline{\widetilde f^+(r)} \\
       f^-(r) & \overline{\widetilde f^-(r)}
     \end{vmatrix}
     =0,
   \end{equation}
where 
   \begin{equation*}
			\lim_{r\to 0} 
           \abs*{\begin{pmatrix} f^+(r) \\ f^-(r) \end{pmatrix} 
           - D 
           \begin{pmatrix}
             A^+ r^{\gamma} \\
             A^- r^{-\gamma}
           \end{pmatrix}}r^{-1/2}
           =0,
           \qquad
           \lim_{r\to 0} 
           \abs*{
           \begin{pmatrix} \widetilde f^+(r) \\ \widetilde f^-(r) \end{pmatrix} 
           - D 
           \begin{pmatrix}
             \widetilde A^+ r^{\gamma} \\
             \widetilde A^- r^{-\gamma}
           \end{pmatrix}}r^{-1/2}
           =0.
   \end{equation*}
           From \eqref{eq:s.a.condition},
           there exists $(a,b) \in \C^2$, $(a,b)\neq (0,0)$ such that
           $a(A^+,A^-) + b(\overline{\widetilde A^+},\overline{\widetilde A^-})=0$.  In particular,
           we choose $(\widetilde A^+,\widetilde A^-)\neq (0,0)$ in order to guarantee $a\neq 0$: we have that
           \[
           a(A^+ \sin \theta +A^- \cos \theta) + b(\overline{\widetilde A^+}\sin \theta+ \overline{\widetilde A^-}\cos \theta)=0
           \]
that implies $(A^+, A^-) \in V_\theta$, that is $(f^+,f^-) \in \mathcal{D}\left(t(\theta)\right)$.

\emph{\ref{thm:Hk.gamma=0}}
  The proof of this case is analogous to the one of \emph{\ref{thm:Hk.gamma<12}}, for this reason we will omit some details.
  Let $t$ be a self-adjoint extension of $h$. Then, thanks to\textit{ \ref{item:properties.domain.gamma=0}} of \Cref{thm:caract:d(h*)} 
   we have that for all $(f^+,f^-) \in \mathcal{D}(t)$
   there exists $(A^+,A^-) \in \C^2$
   such that 
   \begin{equation*}
\begin{split}
&\lim_{r\to 0}
\abs*{
    \begin{pmatrix}
    f^+(r) \\ f^-(r)
  \end{pmatrix}
  - (M\log r+\mathbb{I}_2)
  \begin{pmatrix}
    A^+  \\ A^-
  \end{pmatrix}
 } 
 r^{-1/2}
 = 0,
\end{split}
\end{equation*}
   where $M$ is the real matrix defined in \eqref{eq:defn.M}.
Let $V$ be the linear subspace of $\C^2$ defined as the image of the homomorphism $(f^+,f^-) \in \mathcal{D}(t)\mapsto (A^+,A^-)\in \C^2$. Since $t$ is symmetric, we get that for $(f^+,f^-) \in \mathcal{D}(t)$:
\begin{equation*}
 \lim_{r\to 0}    
     \begin{vmatrix}
       f^+(r) & \overline{f^+(r)} \\
       f^-(r) & \overline{f^-(r)}
     \end{vmatrix}
     =0,
\end{equation*}
and, thanks to \eqref{eq:det.gamma.=0}, it happens if and only if
\begin{equation*}
 \begin{vmatrix}
       A^+ & \overline{ A^+} \\
       A^- & \overline{ A^-}
     \end{vmatrix}
     =0.
\end{equation*}
Applying \Cref{lem:determinat.matrix.null} we deduce that $V=V_\theta=\seq*{(A^+,A^-)\in\C^2:\,A^+\sin\theta+A^-\cos\theta=0}$ for some $\theta\in [0,\pi)$, that is $t=t(\theta)$.

Conversely, let us prove that any $t(\theta)$ is self-adjoint. It is clearly symmetric and it extends $h$. Moreover, 
Let $(f^+,f^-) \in \mathcal{D}\left(t(\theta)^*\right)$: by the definition 
   %of adjoint of an operator, 
we get that for any $(\widetilde{f}^+,\widetilde{f}^-)\in \mathcal{D}\left(t(\theta)\right)$
\begin{equation}\label{eq:aggiunto.gamma=0}
\langle t(\theta)^*(f^+,f^-), (\widetilde{f}^+,\widetilde{f}^-) \rangle_{L^2} =
\langle (f^+,f^-),  t(\theta)(\widetilde{f}^+,\widetilde{f}^-) \rangle_{L^2}.
\end{equation}
Since $t(\theta)$ extends $h$, using the same notation of \emph{\ref{item:properties.domain.gamma=0}} of \Cref{thm:caract:d(h*)}, we can affirm that \eqref{eq:aggiunto.gamma=0} holds if and only if 
\begin{equation*}
 \begin{vmatrix}
       A^+ & \overline{\widetilde A^+} \\
       A^- & \overline{\widetilde A^-}
     \end{vmatrix}
     =0.
\end{equation*}
From this and thanks to the fact that $(\widetilde A^+, \widetilde A^-)\in V_\theta$ we deduce that $(A^+, A^-) \in V_\theta$, that is $(f^+,f^-)\in \mathcal{D}\left(t(\theta)\right)$.
\end{proof}  

For the proof of \Cref{thm:Hk.supercritical} we need the following Lemma.
 
\begin{lemma}\label{lem:determinat.matrix.null<0}
Let $V$ be a complex proper subspace of $\C^2$ and $\tau>0$. Then the following are equivalent:
\begin{enumerate}[label=(\roman*)]
\item\label{item:i.determinant<0}
$(A^+,A^-)\in V$ if and only if
$|A|=\tau |B|$;
\item\label{item:ii.determinant<0}
$V=\seq{(0,0)}$ or $V=V_\theta:=\langle(\tau e^{i\theta} ,e^{-i\theta})\rangle$
with $\theta\in[0,\pi)$.
\end{enumerate} 
\end{lemma}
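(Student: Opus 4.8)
The plan is to follow closely the structure of the proof of \Cref{lem:determinat.matrix.null}, reading the condition in \emph{\ref{item:i.determinant<0}} as $\abs{A^+}=\tau\abs{A^-}$ (that is, $A=A^+$ and $B=A^-$) and as the requirement that every element of $V$ satisfy it, exactly as the analogous condition is used there. The key structural observation is that this condition is homogeneous: replacing $(A^+,A^-)$ by $c(A^+,A^-)$ for $c\in\C$ scales both sides by $\abs{c}$, so it holds for $c(A^+,A^-)$ precisely when it holds for $(A^+,A^-)$. Consequently, for a one-dimensional subspace the property in \emph{\ref{item:i.determinant<0}} is equivalent to its validity on a single generator, and this is what reduces the whole statement to one phase computation. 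The implication \emph{\ref{item:ii.determinant<0}}$\Rightarrow$\emph{\ref{item:i.determinant<0}} is then immediate: if $V=\seq{(0,0)}$ the condition holds trivially, while for $V=V_\theta=\langle(\tau e^{i\theta},e^{-i\theta})\rangle$ every element is $c(\tau e^{i\theta},e^{-i\theta})$, so $\abs{A^+}=\abs{c}\tau=\tau\abs{A^-}$.

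For the converse \emph{\ref{item:i.determinant<0}}$\Rightarrow$\emph{\ref{item:ii.determinant<0}}, I would argue as follows. Since $V$ is a proper subspace of $\C^2$, it has dimension $0$ or $1$. If $\dim V=0$ then $V=\seq{(0,0)}$ and we are done. If $\dim V=1$, write $V=\langle(A_0^+,A_0^-)\rangle$ with $(A_0^+,A_0^-)\neq(0,0)$; by the homogeneity remark it suffices that this generator satisfy $\abs{A_0^+}=\tau\abs{A_0^-}$. Because $\tau>0$, this equality forces both entries to be nonzero: if one vanished, it would force the other to vanish too, contradicting $(A_0^+,A_0^-)\neq(0,0)$.

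It remains to bring the generator into the required normal form, which is the only genuinely computational point. Writing $A_0^+=u\,e^{is}$ and $A_0^-=v\,e^{it}$ in polar form with $u,v>0$, the condition reads $u=\tau v$, so after dividing by the nonzero scalar $v$ we obtain $V=\langle(\tau e^{is},e^{it})\rangle$. Choosing $\theta:=(s-t)/2$ reduced modulo $\pi$ so that $\theta\in[0,\pi)$, the identity $2\theta\equiv s-t \pmod{2\pi}$ yields $(\tau e^{is},e^{it})=e^{i(t+\theta)}(\tau e^{i\theta},e^{-i\theta})$, whence $V=\langle(\tau e^{i\theta},e^{-i\theta})\rangle=V_\theta$. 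I would also record that $V_{\theta+\pi}=V_\theta$, so that $[0,\pi)$ is exactly the range on which $\theta$ is well defined, mirroring the role of $[0,\pi)$ in \Cref{lem:determinat.matrix.null}. The main obstacle is thus purely bookkeeping: choosing the phase $\theta$ so that the representative lands in $V_\theta$ with $\theta\in[0,\pi)$, the homogeneity observation having already collapsed the statement to this single matching.
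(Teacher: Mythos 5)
Your proof is correct and follows essentially the same route as the paper's: the implication \emph{(ii)}$\Rightarrow$\emph{(i)} is immediate, and for the converse you reduce to a one-dimensional $V$, write the generator in polar form, and rescale it by a suitable nonzero complex scalar to reach the normal form $(\tau e^{i\theta},e^{-i\theta})$ with $\theta=(s-t)/2$, exactly as the paper does with its factor $c_2^{-1}e^{-i(a+b)/2}$. Your treatment is in fact slightly more careful than the paper's, since you make explicit the homogeneity of the condition, the nonvanishing of both entries, and the reduction of $\theta$ modulo $\pi$ (using $V_{\theta+\pi}=V_\theta$) to land in $[0,\pi)$, points the paper leaves implicit.
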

\begin{proof}
We prove that \emph{\ref{item:i.determinant<0}} implies \textit{\ref{item:ii.determinant<0}}, since the other implication is obvious.
Let $V$ be as in\emph{ \ref{item:i.determinant<0}}: $V$ can not be the whole $\C^2$, 
so $V$ is a proper subspace of $\C^2$, i.e.~it has dimension zero or one. 
In the first case $V=\seq{(0,0)}$. Let us suppose now that $V$ has dimension one, 
that is $V=\langle(A^+_0,A^-_0)\rangle$ for some $(A^+_0,A^-_0)\neq (0,0)$ with $|A^+_0|=\tau |A^-_0|$. 
In radial coordinates we have $A^+_0 = c_1 e^{i a}, A^-_0= c_2 e^{i b}$ and
$ c_1 = \tau c_2 \neq 0$. 
Setting $(A^+,A^-):= c_2^{-1} e^{-i \frac{a+b}{2}} (A^+_0,A^-_0)=(\tau e^{i\theta},e^{-i \theta})$, with $\theta:=(a-b)/2$,
we have immediately the thesis, since $\langle(A^+,A^-)\rangle=\langle(A^+_0,A^-_0)\rangle$.
\end{proof}

\begin{proof}[Proof of \Cref{thm:Hk.supercritical}]
  The proof of this Theorem is 
analogous to the one of \emph{\ref{thm:Hk.gamma<12}} in \Cref{thm:Hk.subandcritical},
but we need to use \Cref{lem:determinat.matrix.null<0} in place of \Cref{lem:determinat.matrix.null}.
   \end{proof}

\section{Proof of \Cref{thm:distinguished.gamma<12} and \Cref{thm:distinguished.gamma=0}}
\label{sec:distinguished}
Let us give some useful instruments before starting the proof of \Cref{thm:distinguished.gamma<12} and \Cref{thm:distinguished.gamma=0}.
Let $a\in \R\setminus\seq{-1/2}$. For any $\varphi,\chi\in C^\infty_c(0,+\infty)$ we set
\begin{equation*}
\langle\varphi,\chi \rangle_{\mathcal{J}_a}:=\int_0^{+\infty} \partial_r(r^a\varphi(r))\overline{\partial_r(r^a\chi(r))} r^{-2a}\, dr.
\end{equation*}
Thanks to \eqref{eq:hardy.gamma<1/2} and \eqref{eq:hardy.gamma>1/2}, $\langle \cdot, \cdot \rangle_{\mathcal{J}_a}$ defines a scalar product on $C^\infty_c(0,+\infty)$. 
Therefore, if $||\cdot||_{\mathcal{J}_a}$ is the norm induced by $\langle \cdot, \cdot \rangle_{\mathcal{J}_a}$, we get that 
$\mathcal{J}_a:=\overline{C^\infty_c(0,+\infty)}^{||\cdot||_{\mathcal{J}_a}}$ is a Hilbert space.

Let $\varphi\in C^\infty_c(0,+\infty) $. Integrating by parts we get:
\begin{equation}\label{eq:quadrato.J}
||\varphi||^2_{\mathcal{J}_a}=\int_0^{+\infty}  \abs{\partial_r(r^a \varphi(r))}^2 r^{-2a} dr=\int_0^{+\infty} |\varphi'(r)|^2dr+a(a+1)\int_0^{+\infty}\frac{|\varphi(r)|^2}{r^2}\,dr.
\end{equation}
From \eqref{eq:quadrato.J} and thanks to \eqref{eq:hardy.gamma<1/2} and \eqref{eq:hardy.gamma>1/2} we deduce that
\begin{equation*}
\begin{array}{rclc}
(2a+1)^2 ||\varphi||^2_{\mathcal{J}_0}&\leq ||\varphi||^2_{\mathcal{J}_a}\leq&||\varphi||^2_{\mathcal{J}_0}&\text{if}\ a(a+1)\leq 0,\\
||\varphi||^2_{\mathcal{J}_0}&\leq ||\varphi||_{\mathcal{J}_a}\leq&(2a+1)^2 ||\varphi||^2_{\mathcal{J}_0}&\text{if}\ a(a+1)> 0;
\end{array}
\end{equation*}
that means that $\mathcal{J}_a=\mathcal{J}_0=:\mathcal{J}$.

\begin{lemma}\label{lem:caract.J}
Let $\mathcal{J}$ be defined as above. Then
\begin{equation*}
\mathcal{J}=\seq*{u\in AC[0,M]\ \text{for any}\ M>0:
\ \frac{u}{r},\ u'(r)\in L^2(0,+\infty)}.
\end{equation*}
\end{lemma}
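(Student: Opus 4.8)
The plan is to prove the two inclusions separately, after recalling from the discussion preceding the statement that $\mathcal{J}=\mathcal{J}_0$, so that $\mathcal{J}$ is the completion of $C^\infty_c(0,+\infty)$ for the norm $\|\varphi\|_{\mathcal{J}}^2=\int_0^{+\infty}|\varphi'(r)|^2\,dr$ (this is \eqref{eq:quadrato.J} with $a=0$). Denote by $X$ the set on the right-hand side of the claimed identity.

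First I would show $\mathcal{J}\subseteq X$. Let $u\in\mathcal{J}$ and pick $\varphi_n\in C^\infty_c(0,+\infty)$ which is Cauchy for $\|\cdot\|_{\mathcal{J}}$, so $\varphi_n'\to g$ in $L^2$. Since each $\varphi_n$ vanishes at the origin, \Cref{prop:hardy1dhardy}, item \ref{item:hardy.gamma<1/2} with $a=0$, gives $\tfrac14\int_0^{+\infty}|\varphi_n-\varphi_m|^2r^{-2}\,dr\le\int_0^{+\infty}|\varphi_n'-\varphi_m'|^2\,dr$, so that $\varphi_n/r$ is Cauchy and $\varphi_n/r\to h$ in $L^2$ as well. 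Setting $u(r):=\int_0^r g(s)\,ds$, \Cref{lem:fund.thm.calc} yields $u\in AC[0,M]$ for every $M>0$, $u(0)=0$, and $u'=g\in L^2$; moreover Cauchy--Schwarz gives $|\varphi_n(r)|\le\sqrt{r}\,\|\varphi_n'\|_{L^2}$, hence $\varphi_n\to u$ locally uniformly, and therefore $h=u/r\in L^2$ after passing to an a.e.-convergent subsequence. Thus $u\in X$ and $\|u\|_{\mathcal{J}}=\|u'\|_{L^2}$.

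The converse inclusion $X\subseteq\mathcal{J}$ is the density statement and is the main point. Given $u\in X$, I first observe that $u(0)=0$: indeed $u'\in L^2$ allows one to invoke \Cref{prop:hardy1d}, item \ref{item:trace.gamma<1/2}, and \Cref{prop:hardy1dhardy}, item \ref{item:hardy.gamma<1/2} (both with $a=0$), and combining $\int_0^{1}|u-u(0)|^2r^{-2}\,dr<+\infty$ with $u/r\in L^2$ forces $|u(0)|^2\int_0^1 r^{-2}\,dr<+\infty$, i.e.\ $u(0)=0$. I would then approximate $u$ in three steps. Cut off at infinity with $\chi_R$ (equal to $1$ on $[0,R]$, to $0$ on $[2R,+\infty)$, with $|r\chi_R'|\le C$): then $(\chi_R u)'-u'=\chi_R'u+(\chi_R-1)u'$, whose $L^2$-norm is bounded by $C\big(\int_R^{2R}|u/r|^2\,dr\big)^{1/2}+\|(\chi_R-1)u'\|_{L^2}$, both vanishing as $R\to+\infty$. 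Cut off near the origin with $\eta_\epsilon$ (equal to $0$ on $[0,\epsilon]$, to $1$ on $[2\epsilon,+\infty)$, with $|r\eta_\epsilon'|\le C$): the analogous error $\|\eta_\epsilon'u\|_{L^2}$ is controlled by $C\big(\int_\epsilon^{2\epsilon}|u/r|^2\,dr\big)^{1/2}$, which tends to $0$ as $\epsilon\to0$ since $u/r\in L^2$. Finally, the truncated function $v:=\eta_\epsilon\chi_R\,u$ has compact support in $(0,+\infty)$ with $v'\in L^2$, so mollifying with $\rho_\delta$, for $\delta$ small enough that $\supp(\rho_\delta*v)\subset(0,+\infty)$, gives $(\rho_\delta*v)'=\rho_\delta*v'$ and hence $\|\rho_\delta*v-v\|_{\mathcal{J}}=\|\rho_\delta*v'-v'\|_{L^2}\to0$.

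I expect the cutoff near the origin to be the only delicate step, and it works precisely because the weight $r^{-1}$ in $u/r\in L^2$ sits at the critical scale for the first derivative: the normalization $|r\eta_\epsilon'|\le C$ converts the otherwise divergent factor $\epsilon^{-1}$ in $\|\eta_\epsilon'u\|_{L^2}$ into the tail $\int_\epsilon^{2\epsilon}|u/r|^2\,dr$ of an $L^2$ function, which is what forces the error to zero. Everything else (the truncation at infinity and the mollification) is routine, and assembling the three approximations produces a sequence in $C^\infty_c(0,+\infty)$ converging to $u$ in $\|\cdot\|_{\mathcal{J}}$, giving $u\in\mathcal{J}$.
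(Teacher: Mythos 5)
Your proof is correct and follows essentially the same strategy as the paper: the inclusion $\mathcal{J}\subseteq X$ via the Hardy inequality \eqref{eq:hardy.gamma<1/2} (with $a=0$) applied to Cauchy sequences, and the reverse inclusion by cutting off at the origin and at infinity with cutoffs satisfying the critical scaling $\abs{r\,\eta'}\leq C$ (so the errors are controlled by tails of $\int\abs{u/r}^2\,dr$), followed by mollification. The paper merely organizes the density step in the opposite order (mollification for compactly supported $u$ first, then reduction to that case by the cutoffs), so the two arguments are the same in substance.
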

\begin{proof}
We set $\tilde{\mathcal{J}}:=\seq*{u\in AC[0,M]\ \text{for any}\ M>0: \ \frac{u}{r},\ u'(r)\in L^2(0,+\infty)}$. 

Let us prove that $\mathcal{J}\subset\tilde{\mathcal{J}}$: let $\left(u_n\right)_n\subset C^\infty_c(0,+\infty)$ be a Cauchy-sequence in $||\cdot||_{\mathcal{J}}$. Thanks to \eqref{eq:hardy.gamma<1/2} we get that for any $n,m\in \N$
\begin{equation*}
||u_m-u_n||_{\mathcal{J}}^2=
\int_0^{+\infty} 
\abs*{u'_m(r)-u'_n(r) }^2\, dr
\geq 
\frac{1}{4}
\int_0^{+\infty}
\frac{\abs{u_m(r)-u_n(r)}^2}{r^2}\, dr,
\end{equation*}
that means that $\left(\frac{u_n}{r}\right)_n\subset C^\infty_c(0,\infty)$ is a Cauchy-sequence in $L^2$. Let 
$u$ and $\tilde{u}$ be such that 
$
\frac{u_n}{r}\to \frac{u}{r}$ in $L^2$
and
$
u'_n \to \tilde{u}$ in $ L^2$.
Moreover, $u'_n\to u'$ in the sense of distribution. By the uniqueness of the limit we deduce that $u'=\tilde{u}$ and so $u\in \tilde{\mathcal{J}}$.

To prove that $\tilde{\mathcal{J}}\subset\mathcal{J}$ we will follow the approach of \cite[Section 4]{esteban2017domains}.
Let $u\in \tilde{\mathcal{J}}$ and firstly assume that its support is a compact subset of $(0,+\infty)$.
Let $\left(\varphi_n\right)_n$ be a sequence of mollifier function and set $u_n:=\varphi_n* u$. By construction $\left(u_n\right)_n\subset C^\infty_c(0,+\infty)$ and $u_n\to u$ in $\mathcal{J}$, that gives $u\in \mathcal{J}$. 
%By using \eqref{eq:hardy.gamma<1/2} we deduce that $\seq{\frac{u_n}{r}}$ is a Cauchy-sequence in $L^2$, that means that there exists a unique $\tilde u$ such that $\frac{u_n}{r}\to \frac{\tilde{u}}{r}$ and so $u'_n\to \tilde{u}'$ in the sense of distribution. By the uniqueness of the limit, we get $u'=\tilde{u}'$ that gives
%$\tilde{u}=u+C$ for a constant $C\in \C$. Moreover, since $\frac{\tilde{u}}{r}\in L^2$, we deduce that 
%$\liminf_{r\to 0} \tilde{u}(r)=0$, that, combined with the fact that the support of $u$ does not contains $0$, implies $C=0$ and so $\tilde{u}=u$.
Let us finally assume that the support of $u$ is not compact. We set 
\begin{equation*}
\eta(r):=
\begin{cases}
0&\text{if} \ 0\leq r\leq 1,\\
r-1&\text{if} \ 1\leq r\leq 2,\\
1&\text{if} \ 2\leq r,
\end{cases}
\quad
\text{and}
\quad
\zeta(r):=
\begin{cases}
1&\text{if} \ 0\leq r\leq 2,\\
-r+3&\text{if} \ 2\leq r\leq 3,\\
0&\text{if} \ 3\leq r,
\end{cases}.
\end{equation*}
Finally, for any $n\in\N$, we set $\eta_n(r):=\eta (nr)$, $\zeta_n(r):=\zeta\left(\frac{r}{n}\right)$ and $u_n:=\left(\eta_n+\zeta_n\right) u$.
For any $n\in \N$, $u_n\in \mathcal{J}$ because its support is compact by construction and
$u_n\in \tilde{\mathcal{J}}$. Indeed:
$u_n\in AC[0,M]$ for any $M>0$ and $\frac{u_n}{r}\in L^2$ because the support of $u_n$ is compact.
Moreover
$u'_n=(\eta_n+\zeta_n)u'+(\eta_n+\zeta_n)'u\in L^2$
because, on the right-hand side, both are $L^2$ function on compact subsets of $(0,+\infty)$.

Finally \begin{align*}
||u_n-u||_{\mathcal{J}}^2 &
\leq 2 \int_0^{+\infty}|(\eta_n(r)+\zeta_n(r))u'(r)-u'(r)|^2\, dr+2 \int_0^{+\infty}|(\eta_n(r)+\zeta_n(r))'u(r)|^2\, dr\\
&=:I_1(n)+I_2(n).
\end{align*}  
Regarding the first term we see that
\begin{equation*}
I_1(n)\leq 2 \int_{0}^{2/n}|u'(r)|^2\, dr+2 \int_{2n}^{+\infty}|u'(r)|^2\,dr\to 0,
\end{equation*}
if $n\to+\infty$, by the dominated convergence theorem.
About the second term we notice
\begin{equation*}
I_2(n)=2n^2 \int_{2/n}^{3/n}|u(r)|^2\,dr+\frac{2}{n^2}\int_{2n}^{3n} |u(r)|^2\, dr\leq 8\int_0^{3/n}\frac{|u(r)|^2}{r^2}\, dr+18\int_{2n}^{+\infty}\frac{|u(r)|^2}{r^2}\,dr\to 0,
\end{equation*}
if $n\to+\infty$, by the dominated convergence theorem.
Then $u_n\to u$ in $\mathcal{J}$ that gives $u\in \mathcal{J}$.
\end{proof}

We can now start the proofs of \Cref{thm:distinguished.gamma<12} and \Cref{thm:distinguished.gamma=0}. From now on we will use the same notation in \eqref{eq:no.m_jk_j}.
\begin{proof}[Proof of \Cref{thm:distinguished.gamma<12}]
We start proving that
\emph{\ref{lab:distinguished.gamma<12ii}}$\Rightarrow$\emph{\ref{lab:distinguished.gamma<12iii}}. 
Let $\theta = 0$. Then, for any $(f^+,f^-)\in \mathcal{D}\left(t(0)\right)$ there exists $A^+\in \C$ such that
\begin{equation*}
  \int_0^{+\infty} 
\frac{1}{r^2}\abs*{
  \begin{pmatrix}
    f^+(r) \\ f^-(r)
  \end{pmatrix}
  - D 
  \begin{pmatrix}
    A^+ r^\gamma \\ 0
  \end{pmatrix}
}^2
\,dr.
\end{equation*}
that tells us that 
\begin{equation*}
\int_0^{+\infty}
\frac{\abs{f^+(r)-B^+ r^\gamma}^2}{r^2}\, dr
+
\int_0^{+\infty}
\frac{\abs{f^-(r)-B^-r^\gamma}^2}{r^2}\, dr
<+ \infty
\end{equation*}
with 
$\begin{pmatrix}
B^+\\
B^-
\end{pmatrix}=
D\begin{pmatrix}
A^+\\
0
\end{pmatrix}
$.
Since $0<\gamma<1/2$ we deduce that for $a\in \left[\frac{1}{2},\frac{1}{2}+\gamma\right)$,
\begin{equation*}
\int_0^{+\infty}
\frac{|f^\pm(r)|^2}{r^{2a}}\,dr\leq
2\int_0^1
\frac{\abs{f^\pm(r)-B^\pm r^\gamma}^2}{r^2}\,dr
+2|B^\pm|
\int_0^1
r^{2\gamma-2a}\,dr+
\int_1^{+\infty}
|f^\pm(r)|^2\,dr<+\infty.
\end{equation*}

It is trivial that \emph{\ref{lab:distinguished.gamma<12iii}} implies \emph{\ref{lab:distinguished.gamma<12iv}}.

Let us now show that \emph{\ref{lab:distinguished.gamma<12iv}} implies \emph{\ref{lab:distinguished.gamma<12ii}}.
Let $\theta \in [0,\pi)$ and
$(f^+,f^-) \in \mathcal{D}\left(t(\theta)\right)$,
such that \eqref{eq:defn.Hk.<1/2} holds for $A^\pm\in\C$ and assume that $f^\pm\in \mathcal{D}(r^{-1/2})$.

Let $\varphi^-$ be defined as in \eqref{eq:phi=Mf}. Therefore $\varphi^-\in \mathcal{D}(r^{-1/2})$ and \eqref{eq:cond.C} holds. 
Then
\begin{equation*}
\begin{split}
\int_0^1 \frac{\abs{A^- r^{-\gamma}}^2}{r}\,dr   \leq & 
  2\int_0^1 \frac{\abs{\varphi^--A^- r^{-\gamma}}^2}{r^2}\,dr
  +
  2\int_0^1 \frac{\abs{\varphi^-}^2}{r}\,dr 
  < +\infty.
\end{split}
\end{equation*}
Since $0<\gamma<1/2$, we conclude that $A^-=0$. From the arbitrariness of $(f^+,f^-) \in \mathcal{D}\left(t(\theta)\right)$, 
we have $\theta=0$.

To conclude the proof it remains to show that \emph{\ref{lab:distinguished.gamma<120}} and \emph{\ref{lab:distinguished.gamma<12ii}} are equivalent.
Let $(f^+,f^-)\in \mathcal{D}\left(t(\theta)\right)$ and $(A^+,A^-)\in\C^2$ such that \eqref{eq:defn.Hk.<1/2} holds.

We notice that $\varphi^-_{m_j,k_j}$ defined in \eqref{eq:def.phi^-.gamma<12} and $\varphi^-$ defined in \eqref{eq:phi=Mf} coincide. Then, from \eqref{eq:cond.C}, we deduce that
 $\varphi^-\in \mathcal{J}$ if and only if $A^-=0$ that is equivalent to say that $\theta=0$ due to the arbitrariness of $(f^+,f^-)\in\D(t(\theta))$.
\end{proof}

\begin{proof}[Proof of \Cref{thm:distinguished.gamma=0}]
Let $(f^+,f^-)\in \mathcal{D}\left(t(\theta)\right)$ and $(A^+,A^-)\in\C^2$ such that \eqref{eq:defn.Hk.=0} holds. 
In the case that $\nu+\mu\neq0$ we notice that $\varphi^-_{m_j,k_j}$ 
defined in \eqref{eq:def.phi^-.gamma=0} and $\varphi^-$ defined in \eqref{eq:phi=Mf} coincide. 
From \eqref{eq:cond.C.gamma=0}, we deduce that
 $\varphi^-\in \mathcal{J}$ if and only if 
$
(\nu+\mu)A^++(k+\lambda)A^-=0.
$
Due to the arbitrariness of $(f^+,f^-)$ it is equivalent to say that $\theta$ is as in \emph{\ref{lab:distinguished.gamma=0iii}}. 

Let us assume $\nu+\mu=0$.
Then $\varphi^-_{m_j,k_j}=-2\nu f^-\in \mathcal J$ if and only if $A^-=0$ that is equivalent to say $\theta=0$ due to the arbitrariness of $(f^+,f^-)\in \D(t(\theta))$.
\end{proof}

\end{document}